\newcommand{\F}{\mathbb{F}}
\newcommand{\Q}{\mathbb{Q}}
\newcommand{\R}{\mathbb{R}}
\newtheorem{theorem}{Theorem}[section]
\newtheorem{proposition}[theorem]{Proposition}
\newtheorem{lemma}[theorem]{Lemma}
\newtheorem{corollary}[theorem]{Corollary}
\theoremstyle{definition}
\newtheorem{definition}[theorem]{Definition}
\newtheorem{example}[theorem]{Example}
\theoremstyle{remark}
\newtheorem{remark}[theorem]{Remark}
\let\emph\relax % there's no \RedeclareTextFontCommand
\DeclareTextFontCommand{\emph}{\bfseries\em}
\DeclareMathOperator{\rank}{rank}
\DeclareMathOperator{\Skew}{Skew}
\tikzstyle{edge}=[line width=1.5pt,black]
\tikzstyle{vertex}=[fill=black,circle,inner sep=0pt, minimum size=5.5pt]
\title{Extremal decompositions of tropical varieties and relations with rigidity theory}
\author{Farhad Babaee\thanks{School of Mathematics, University of Bristol, \texttt{farhad.babaee@bristol.ac.uk}} \and Sean Dewar\thanks{School of Mathematics, University of Bristol, \texttt{sean.dewar@bristol.ac.uk}} \and James Maxwell\thanks{School of Mathematics, University of Bristol, \texttt{james.maxwell@bristol.ac.uk}}
}
\begin{document}
\date{}
\maketitle

\begin{abstract}

Extremality and irreducibility constitute fundamental concepts in mathematics, particularly within tropical geometry. While extremal decomposition is typically computationally hard, this article presents a fast algorithm for identifying the extremal decomposition of tropical varieties with rational balanced weightings. Additionally, we explore connections and applications related to rigidity theory.    
In particular,
we prove that a tropical hypersurface is extremal if and only if it has a unique reciprocal diagram up to homothety.
\end{abstract}

{\small \noindent \textbf{MSC2020:} 14T10, 14T15, 52C25}

{\small \noindent \textbf{Keywords:} tropical varieties, infinitesimal rigidity, reciprocal diagrams, parallel redrawings, tropical decomposition, Newton polytopes, duality}

%\tableofcontents

\section{Introduction}
Let $V$ be a vector space over the field $\F \in \{\Q, \R \},$ and $K\subseteq V$ be a closed convex set. Recall that an element $v\in K$ is called \emph{extremal} if any decomposition $ v = v_1+  v_2$ for $v_1, v_2 \in K$ implies that there are non-negative scalars $\lambda_1, \lambda_2 \in \F$ such that $v= \lambda_1 v_1$ and $v= \lambda_2 v_2. $ As it turns out, any element of a compact convex set can be written as a (limit of) linear combinations of extremal elements with positive coefficients. More concretely, the Krein-Milman theorem (see \cite[Theorem 3.23]{Rudin}) -- or more generally Choquet's theorem (see \cite{Phelps}) --  imply that when $K$ is a compact subset of a Hausdorff locally convex topological vector space $V,$ then the closure of the convex hull of the set of extremal points of $K$ coincides with $K$. In consequence, understanding the extremal elements of different convex sets is a fundamental question in mathematics. For instance: 

\begin{enumerate}
    \item \label{firstpoint} In algebraic geometry, \emph{irreducible algebraic varieties} are building blocks of algebraic varieties, and in the following sense they relate to extremality: Let $X$ be a smooth algebraic variety, then the extremal elements of the set of \emph{effective $\F$-cycles} 
    $$ \left\{\sum \lambda_i [Z_i]: Z_i \text{ an algebraic subvariety of pure dimension $p$},\, \lambda_i \in  \F_{\geq 0}\right\}$$
    are of the form $\lambda [Z],$ where $\lambda \geq 0$ and $Z$ is irreducible. 
    
    \item \label{secondpoint} In ergodic theory and dynamical systems, given a probability space  $(X, B, \mu )$ and a map $T: X \to X,$ the ergodic measures are exactly the extremal elements in the convex set of $T$-invariant probability measures; see \cite[Proposition 12.4]{Phelps}.

    \item \label{thirdpoint} In tropical geometry, one can define 
    % a tropical variety to be
    \emph{extremal tropical varieties}; see \Cref{sec:trop_var}. 
    Interestingly, certain tropical varieties called \emph{Bergman fans}, which are cryptomorphic to {matroids}, are extremal; see \cite{Huh}.

    \item  In analytic geometry, one can consider the space of \emph{currents} on a complex manifold, which is the set of continuous functionals on smooth forms with compact support. In this space, one defines the cone of positive currents. It turns out that the set of the extremal elements of this cone contains the currents associated to extremal examples given in \ref{firstpoint} and \ref{thirdpoint} above; see \cite{Lelong} and \cite{BH}, respectively. Invariant extremal currents extend the notion of ergodicity in \ref{secondpoint}. Currents associated to tropical extremal varieties also gave rise to a family of counter-examples to the generalised Hodge conjecture for positive currents; see \cite{BH, Adi-Baba}.
\end{enumerate}

The above examples justify why tropical extremal varieties are significant, and extremal decompositions are useful. Note that in a cone which is a positive span of finitely many elements or a polytope which is a convex hull of finitely many points in $\R^n$, the set of extremal elements are finite, and in consequence (\Cref{t:allirreducible}), extremal decompositions of tropical varieties are always finite.

In recent years,
tropical geometry has been applied with great success to the area of \emph{rigidity theory} -- the study of kinematics for bar-and-joint frameworks.
See for example \cite{bernstein2019tropical,cggkls,gls}.
There has, however, not been any such research seeking to explore how concepts in rigidity theory can be applied to tropical geometry.
In this paper we explore how the language of rigidity theory can be applied to better understand extremality for tropical varieties.
The key link that achieves this is the observation that balanced weightings for tropical varieties play an identical role to \emph{equilibrium stresses} for frameworks -- a physical quantity that measures the internally-generated forces of an over-constrained framework.

We begin by restricting to the case of \emph{tropical hypersurfaces} --
tropical varieties determined by a single tropical polynomial.
Since each maximal face is now a $(d-1)$-dimensional convex polytope,
we are able to construct what is known as a \emph{reciprocal diagram} (also known as a \emph{Maxwell reciprocal diagram}).
This is a pair $(G,p)$,
where $G$ is the dual graph of the tropical hypersurface (a vertex for each connected subset of the complement and an edge between vertices whose corresponding connected components share a maximal face) and straight-edge graph embedding $p$ where each edge is perpendicular to its corresponding maximal face in the tropical hypersurface (see \Cref{def:dualframework} for more details).
In particular,
the subdivided Newton polytope of a tropical polynomial $f$ is always a reciprocal diagram of the tropical hypersurface defined by $f$.
The use of reciprocal diagrams to investigate static properties of framework structures was first initiated by James Clerk Maxwell\footnote{No relation to the third author.} \cite{maxwell45}, and these techniques are still used to this very day \cite{sm23,baker}.
In the context of planar bar-and-joint frameworks,
a framework has a unique equilibrium stress if and only if a chosen reciprocal diagram has exactly one \emph{parallel redrawing} up to homothety:
any other embedding of the dual graph with all edges parallel to the original reciprocal diagram.
In \Cref{sec:hyper},
we prove that an analogous statement is also true for tropical hypersurfaces.

\begin{theorem}\label{t:main}
    Let $C$ be a tropical hypersurface in $\mathbb{R}^d$,
    and let $(G,p)$ be a reciprocal diagram of $C$.
    Then the following properties are equivalent.
    \begin{enumerate}
        \item \label{t:main1} $C$ is extremal.
        \item \label{t:main3} $(G,p)$ is \emph{direction rigid}, i.e.,
        if $(G,q)$ is a framework in $\mathbb{R}^d$ where each edge of $(G,q)$ is parallel to its corresponding edge in $(G,p)$,
        then $(G,q)$ is a scaled and translated copy of $(G,p)$.
    \end{enumerate}
\end{theorem}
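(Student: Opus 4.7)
The plan is to reformulate both extremality and direction rigidity as properties of the same underlying object: the linear space of signed balanced weightings supported on $|C|$. For each maximal face $F$ of $|C|$ with primitive integer normal $n_F$, the corresponding edge $e=\{A,B\}$ of $G$ satisfies $p(B) - p(A) = w_F \cdot n_F$ (after fixing a sign convention for $n_F$), where $w_F$ is the weight of $F$. A parallel redrawing $(G,q)$ is therefore determined by assigning real scalars $w'_F$ via $q(B)-q(A) = w'_F \cdot n_F$, and the map $w' \mapsto q$ (well-defined only up to a global translation) should be a linear isomorphism between signed balanced weightings on $|C|$ and parallel redrawings of $(G,p)$ modulo translation.

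The crux of establishing this isomorphism is to match the cycle space of $G$ with the balancing conditions on $|C|$. Cycles in $G$ arise from the codimension-$2$ cells of $C$: the maximal faces of $|C|$ incident to a given codimension-$2$ cell form a closed cycle in $G$, and closure of $q$ around this cycle reads $\sum w'_F n_F = 0$, which is precisely the tropical balancing condition at that cell. Hence $w' \mapsto q$ is well-defined up to translation exactly when $w'$ is balanced, and the inverse map extracting $w'_F$ from the edge vectors of any parallel redrawing is immediate. Under this isomorphism, the weighting $w$ of $C$ corresponds to $p$ itself, and the non-negative balanced weightings correspond to parallel redrawings whose edges are non-negative scalar multiples of the edges of $(G,p)$.

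With the bijection in hand, the theorem follows by a short cone-theoretic argument. We may assume $w$ is strictly positive on every face of $|C|$ (faces of zero weight are not in the support). For (i) $\Rightarrow$ (ii), let $(G,q)$ be any parallel redrawing corresponding to a signed balanced weighting $v$. For sufficiently small $\epsilon > 0$, both $w + \epsilon v$ and $w - \epsilon v$ are non-negative balanced weightings, so the decomposition $w = \tfrac{1}{2}(w+\epsilon v) + \tfrac{1}{2}(w-\epsilon v)$ combined with extremality of $C$ forces each summand to be a non-negative multiple of $w$. Hence $v$ is a scalar multiple of $w$, and correspondingly $q = \lambda p + c$ for some $\lambda \in \mathbb{R}$ and $c \in \mathbb{R}^d$. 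For (ii) $\Rightarrow$ (i), any decomposition $w = w_1 + w_2$ into non-negative balanced weightings yields parallel redrawings via the bijection, and direction rigidity forces $w_i = \lambda_i w$; positivity of $w$ then forces $\lambda_i \geq 0$, giving extremality.

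The main obstacle is establishing the bijection rigorously in arbitrary dimension $d$. In the planar case the dual graph is the familiar planar dual and the correspondence between cycles and codimension-$2$ cells of the tropical curve is transparent, but in higher dimensions one must verify that the cycle space of $G$ is generated by the cycles around codimension-$2$ cells, and that the geometry of $(G,p)$ as defined in \Cref{def:dualframework} really does realize the claimed edge vectors. Once this combinatorial-topological dictionary is in place, the rest of the argument is a routine exercise in convex geometry.
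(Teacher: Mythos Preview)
Your overall strategy coincides with the paper's: set up a linear correspondence between balanced weightings of $C$ and parallel redrawings of $(G,p)$ modulo translation, then finish with a short cone argument. The perturbation $w \pm \varepsilon v$ and the decomposition $w = w_1 + w_2$ you use are essentially the contrapositive arguments the paper runs, just organised slightly differently (the paper works throughout with strictly positive weightings, invoking \Cref{lem:extre-decomp}\ref{lem:extre-decomp3} so that both directions only ever require genuine reciprocal diagrams rather than degenerate redrawings with collapsed edges).

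Where you diverge is exactly at the obstacle you flag: you want to prove directly that the cycle space of $G$ is generated by the ridge cycles, so that balancing at every ridge forces closure around every cycle. The paper sidesteps this topological question entirely. It instead invokes the structure theorem that every positive integer balanced weighting on a tropical hypersurface comes from some tropical polynomial $f$ (\cite[Proposition~3.3.10]{Mac+Stur:15}); the subdivided Newton polytope $\mathrm{SD}(f)$ is then a concrete reciprocal diagram realising that weighting, and any cycle in $G$ corresponds to a closed walk through the vertices of $\mathrm{SD}(f)$ in $\mathbb{R}^d$, whose edge vectors trivially sum to zero. This is packaged as \Cref{lem:dualsums}, which feeds into \Cref{lem:dualframework} (build $p$ along a spanning tree, check consistency on the remaining edges). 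Your route would yield the same bijection and is arguably more conceptual, but it leaves a nontrivial lemma unproved; the paper's Newton-polytope shortcut trades that lemma for an appeal to existing tropical structure theory.
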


\Cref{t:main} indicates that extremality is a dual concept to ``rigidity'' for tropical hypersurfaces.
This concept is more clearly seen for tropical curves (tropical hypersurfaces in the plane).
Here we apply well-known structural engineering techniques (see \Cref{subsec:parallelredraw}) to obtain the following result.

\begin{corollary}\label{cor:main}
    Let $C$ be a tropical curve in $\R^2$ and let $(G,p)$ be a reciprocal diagram of $C$.
    Then the following properties are equivalent.
    \begin{enumerate}
        \item \label{cor:main1} $C$ is extremal.
        \item \label{cor:main4} $(G,p)$ is infinitesimally rigid.
    \end{enumerate}
\end{corollary}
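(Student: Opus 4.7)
The plan is to derive \Cref{cor:main} directly from \Cref{t:main} by showing that, in dimension two, direction rigidity and infinitesimal rigidity are equivalent notions. Since \Cref{t:main} already gives the equivalence of extremality with direction rigidity, it suffices to establish this planar equivalence between the two notions of rigidity.

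The key tool is the classical ``rotation by $90^\circ$'' correspondence between parallel redrawings and infinitesimal motions in the plane, which the paper indicates is developed in \Cref{subsec:parallelredraw}. Let $R$ denote the linear map on $\R^2$ given by rotation by $90^\circ$. For a framework $(G,p)$, a map $q : V(G) \to \R^2$ is a parallel redrawing precisely when $q(v) - q(w)$ is parallel to $p(v) - p(w)$ for every edge $vw$, which holds if and only if $R(q(v) - q(w))$ is orthogonal to $p(v) - p(w)$. This last condition says exactly that $R \circ q$ is an infinitesimal motion of $(G,p)$. Thus the linear map $q \mapsto R \circ q$ is a bijection between the space of parallel redrawings of $(G,p)$ and the space of infinitesimal motions of $(G,p)$.

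Next I would verify that this bijection carries the \emph{trivial} parallel redrawings onto the \emph{trivial} infinitesimal motions. A trivial parallel redrawing has the form $q = \lambda p + t$ for some scalar $\lambda$ and translation $t \in \R^2$; applying $R$ pointwise gives $R \circ q = \lambda (R \circ p) + Rt$, where $\lambda (R \circ p)$ is precisely an infinitesimal rotation of $p$ and $Rt$ is a translation. Conversely, every infinitesimal isometry in the plane decomposes in this manner, so the bijection restricts to an isomorphism on the trivial subspaces. Consequently, $(G,p)$ is direction rigid (every parallel redrawing is trivial) if and only if $(G,p)$ is infinitesimally rigid (every infinitesimal motion is trivial). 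Combining this with the equivalence \ref{t:main1} $\Leftrightarrow$ \ref{t:main3} in \Cref{t:main} yields \Cref{cor:main}.

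The main subtlety to be careful about is the dimension count matching the ``trivial'' subspaces on both sides: in $\R^2$ one has a three-dimensional space of trivial infinitesimal motions (two translations plus one rotation) and a three-dimensional space of trivial parallel redrawings (two translations plus one global scaling), and the rotation map must be shown to send one onto the other. This is straightforward in the plane, but importantly fails in higher dimensions (where scalings and rotations no longer match up), which is exactly why \Cref{cor:main} is stated only for tropical curves rather than for arbitrary tropical hypersurfaces.
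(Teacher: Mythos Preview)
Your proposal is correct and follows essentially the same approach as the paper: the paper's proof simply invokes \Cref{t:main} together with \Cref{t:paralleldrawing}, and your argument is precisely an inline reproof of \Cref{t:paralleldrawing} via the $90^\circ$ rotation trick (the paper's version rotates $p$ rather than $q$, but this is the same idea viewed dually). The only point you leave implicit is the degenerate affine-dimension-zero case, but this cannot occur for a reciprocal diagram since every edge $p(v)-p(w)$ is a nonzero multiple of $x_\sigma(v,w)$.
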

\noindent
Combinatorial consequences to \Cref{cor:main} are explored in greater detail in \Cref{sec:comb}.

In \Cref{sec:decomp}, we explore extremality in general tropical varieties.
We do so by introducing a ``rigidity matrix'' $R(C)$ for a given tropical variety $C$ (see \Cref{subsec:rc} for a detailed guide on how to construct $R(C)$).
The importance of the matrix $R(C)$ is that a weighting for a tropical variety (possibly with irrational and negative entries) satisfies the balancing condition if and only if it lies in the left kernel of $R(C)$.
This mirrors the situation of equilibrium stresses for bar-and-joint frameworks,
where an edge weighting is an equilibrium stress if and only if it is an element of the left kernel of the framework's \emph{rigidity matrix} (see \Cref{subsec:rigid}).
Using this rigidity matrix, we are able to prove the following results:

\begin{theorem}\label{t:detect}
    Let $C$ be a $k$-dimensional tropical variety in $\mathbb{R}^d$.
    Then the largest integer $n$ such that $C$ has $n$ linearly independent balanced weightings is equal to $\dim \ker R(C)^T$.
    Hence $C$ is extremal if and only if $\rank R(C) = |\widetilde{E}|-1$.
\end{theorem}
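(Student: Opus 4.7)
The plan is to reduce the theorem to a linear-algebraic restatement of the defining property of the rigidity matrix $R(C)$, and then to translate the dimensional condition into the extremality criterion stated in the introduction.

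First, I would invoke the characterisation of $R(C)$ recalled just before the theorem: a weighting $\omega$ on the maximal faces of $C$ is balanced if and only if $\omega \in \ker R(C)^T$. The set of all balanced weightings therefore forms a linear subspace of $\mathbb{R}^{|\widetilde{E}|}$, and the maximum number of linearly independent balanced weightings equals the dimension of that subspace, namely $\dim \ker R(C)^T$. This gives the first assertion directly. Since $R(C)$ has $|\widetilde{E}|$ rows, the rank-nullity theorem applied to $R(C)^T$ yields $\dim \ker R(C)^T = |\widetilde{E}| - \rank R(C)$, so the equation $\rank R(C) = |\widetilde{E}|-1$ is equivalent to the space of balanced weightings being one-dimensional.

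It remains to match this one-dimensionality with extremality of $C$. Let $v$ denote the positive balanced weighting that makes $C$ a tropical variety. For the easy direction, if every balanced weighting is a scalar multiple of $v$, then in any decomposition $v = v_1 + v_2$ in which $v_1$ and $v_2$ are balanced, both summands are non-negative scalar multiples of $v$; this is precisely the extremality condition recalled in the introduction. For the converse, suppose $\dim \ker R(C)^T \geq 2$ and pick a balanced weighting $w$ linearly independent from $v$. Because $v$ is strictly positive on every maximal face, for sufficiently small $\varepsilon > 0$ the weightings $v + \varepsilon w$ and $v - \varepsilon w$ remain strictly positive and balanced, and the identity $v = \tfrac{1}{2}(v + \varepsilon w) + \tfrac{1}{2}(v - \varepsilon w)$ then contradicts extremality, since $w$ is not a scalar multiple of $v$.

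The main obstacle I anticipate is not in the chain of equivalences itself but in verifying cleanly that the perturbed weightings $v \pm \varepsilon w$ produced in the converse direction are genuinely admissible summands in the convex cone with respect to which extremality is defined. In other words, one must confirm that positivity together with the balancing condition — captured entirely by $R(C)$ — is the only constraint imposed by membership in this cone, so that no further structural requirement (refinement of the polyhedral complex, restriction on the supports, etc.) sneaks in. Once this is verified, the theorem is essentially a translation between the linear algebra of $R(C)$ and the cone-theoretic notion of extremality.
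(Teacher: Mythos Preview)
Your overall strategy matches the paper's: the core input is Lemma~\ref{lem:rcweights}, and for the extremality clause you are essentially re-deriving the equivalence \ref{lem:extre-decomp1}$\Leftrightarrow$\ref{lem:extre-decomp3} of Proposition~\ref{lem:extre-decomp}, which the paper takes as already established.

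There is, however, one genuine slip in your first paragraph. In this paper a \emph{balanced weighting} is by definition a map $\widetilde{E}\to\mathbb{Z}_{>0}$ satisfying the balancing condition, so the set of balanced weightings is \emph{not} a linear subspace of $\mathbb{R}^{|\widetilde{E}|}$; it is only the set of strictly positive integer points of $\ker R(C)^T$. Hence your sentence ``the maximum number of linearly independent balanced weightings equals the dimension of that subspace'' requires an argument in the direction $n\geq\dim\ker R(C)^T$: one must show that if $\ker R(C)^T$ has dimension $m$ then it contains $m$ linearly independent \emph{strictly positive integer} vectors. The paper does this explicitly by taking an integer basis $\omega,\omega_2,\dots,\omega_m$ of the kernel (possible since $R(C)$ is integer-valued) and replacing each $\omega_i$ by $a_i\omega+\omega_i$ for $a_i\in\mathbb{Z}_{>0}$ large enough to force all coordinates positive. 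This is precisely the perturbation idea you deploy in your second paragraph for $m=2$; you simply need to invoke it already in the first paragraph, for general $m$, and attend to integrality rather than working over $\mathbb{R}$.

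Your final worry is disposed of by Proposition~\ref{lem:extre-decomp}: extremality of $C$ (non-decomposability as a union) has already been shown equivalent to uniqueness of the balanced weighting up to rational scaling, so positivity together with the balancing condition really is the only constraint defining the relevant cone, and no hidden structural requirement on supports or refinements intervenes.
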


We conclude the paper by investigating extremal decompositions for tropical varieties,
i.e., if $C$ is a tropical variety,
find a set of extremal tropical subvarieties of $C$ that cover $C$.
We determine two methods for identifying all such decompositions:
either by finding minimal generating elements of a positive cone (\Cref{t:allirreducible}),
or by finding sets of vertices of a polytope which contain an interior point of the polytope in their convex hull (\Cref{t:decompperfect}).
Both of these results, and many others,
allow us to construct an efficient algorithm for extremal decompositions.

\begin{theorem}
    There exists a fast algorithm (as described in \Cref{subsec:alg}) for constructing an extremal decomposition of any given tropical variety. 
\end{theorem}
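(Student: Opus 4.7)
The plan is to establish correctness of the algorithm described in \Cref{subsec:alg} by appealing to the structural theorems already proven in the paper (\Cref{t:detect}, \Cref{t:allirreducible}, and \Cref{t:decompperfect}), and then to bound its running time in terms of the combinatorial complexity of the input tropical variety $C$.

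At a high level, given $C$ as a list of maximal faces together with their weights, I would first construct the rigidity matrix $R(C)$ as detailed in \Cref{subsec:rc}; this has size polynomial in the combinatorial data of $C$. Apply Gaussian elimination over $\Q$ to compute $\rank R(C)$, then invoke \Cref{t:detect}: if $\rank R(C) = |\widetilde{E}|-1$, output $\{C\}$ and terminate. Otherwise, compute a basis of $\ker R(C)^T$, intersect this subspace with the non-negative orthant on the support of the weighting, and enumerate the extremal rays of the resulting polyhedral cone. By \Cref{t:allirreducible}, each such extremal ray corresponds to an extremal tropical subvariety, and these collectively cover $C$, yielding an extremal decomposition. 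Correctness is then almost immediate: $R(C)$ captures all balanced weightings as its left kernel, so the cone we are enumerating coincides with the cone of non-negative balanced weightings supported on $C$, and its extremal rays are precisely the extremal building blocks of the decomposition.

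The main obstacle I foresee is proving that this enumeration can be carried out \emph{quickly}: generic extremal-ray enumeration for polyhedral cones is not known to admit a worst-case polynomial-time algorithm, and the number of extremal rays can itself be exponential. To overcome this, I would exploit \Cref{t:decompperfect}, which reframes the search for extremal decompositions in terms of identifying vertices of a polytope whose convex hull contains a specified interior point. This reduces naturally to a sequence of linear programs: at each step, solve an LP (for instance, minimising a strictly separating linear functional over the cone) to extract a single extremal summand, subtract the corresponding weighted subvariety from the current weighting, and recurse on the residual.

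To complete the proof, I would verify two invariants along the recursion. First, that the residual weighting remains balanced and non-negative, so that \Cref{t:allirreducible} continues to apply; this follows from closure of $\ker R(C)^T$ under subtraction and from the LP construction ensuring non-negativity. Second, that each LP extraction indeed returns an \textbf{extremal} summand rather than a merely decomposable one; this is where the strict separation property and \Cref{t:detect} (applied to the rigidity submatrix of the extracted piece) are essential. Since each iteration strictly reduces either the size of the support or the dimension of $\ker R(C)^T$, and each LP runs in polynomial time in the size of $R(C)$, termination in polynomially many steps follows, yielding the claimed fast algorithm.
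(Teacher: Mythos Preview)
Your high-level strategy is sound and would produce a correct fast algorithm, but it is not the algorithm of \Cref{subsec:alg}. The paper does \emph{not} subtract an extremal summand from the current weighting and recurse on the residual. Instead, it first finds a single vertex $\omega$ of the polytope $P(C)$ via one LP, records the set $I$ of coordinates on which $\omega$ vanishes, and then repeatedly constructs auxiliary matrices $M_J$ (by appending unit columns indexed by $J \subset I$ with $|J|=\dim\ker R(C)^\top - 2$) whose transpose has one-dimensional kernel. The kernel direction $x_J$ is used to walk from $\omega$ along a line in $P(C)$ to a new boundary point $\omega + t x_J$; this new point is added to a growing set $S$, and coordinates of $I$ that become positive are removed from the tracking set $I'$. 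The loop terminates once every coordinate in $I$ is covered by some element of $S$, and \Cref{t:decompperfect} then certifies that $S$ yields an extremal decomposition. In short, the paper performs a sequence of kernel computations rather than a sequence of LPs, and it accumulates vertices spanning $P(C)$ rather than peeling off summands from a shrinking residual weighting.

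Your greedy Carath\'eodory-style peeling is a genuine alternative and is conceptually cleaner: it reduces directly to the standard fact that any point in a pointed rational cone can be written as a positive combination of extremal rays found one at a time by LP. Two points in your write-up deserve tightening. First, you should make explicit that after subtracting $t\omega_1$ with $t$ chosen \emph{maximal} subject to non-negativity, the residual has strictly smaller support; this is what actually drives termination, not the vaguer ``support or kernel dimension decreases''. Second, in the recursive step your LP must be solved over the \emph{face} of $P(C)$ defined by the residual's zero coordinates (equivalently over $P(C')$ for the support $C'$), otherwise the vertex returned need not lie below the residual; your invocation of ``strict separation'' and \Cref{t:detect} on a rigidity submatrix is more machinery than needed here --- a vertex of a face of $P(C)$ is automatically a vertex of $P(C)$, and hence extremal by \Cref{t:allirreducible}.
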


\section{Preliminaries}
We begin by introducing the necessary background from both the areas of tropical geometry (\Cref{sec:trop_var}) and rigidity theory (\Cref{subsec:rigid,subsec:parallelredraw}). Tropical geometry is the piece-wise linear counterpart of algebraic geometry where polyhedral complexes play the role of varieties. Whereas, rigidity theory studies when and how bar-and-joint frameworks can be deformed. 
\subsection{Tropical varieties}\label{sec:trop_var}

Let $\mathbb{T} = \mathbb{R} \cup \{-\infty\}$ be the tropical semiring with the tropical addition and multiplication operations:
\begin{align*}
    x \oplus y &:= \max \{x,y\},\\
    x \otimes y &:= x +y.
\end{align*}
The identity for tropical addition is $-\infty$, while the identity for tropical multiplication is $0$.
Note that while every element $x$ has a tropical multiplication inverse $-x$,
they do not have a tropical addition inverse.
For any positive integer $k$, we define $x^{\otimes k}$ to be the tropical multiplication of $k$ copies of $x$, $x^{\otimes (-k)}$ to be the tropical multiplication of $k$ copies of $-x$, and $x^{\otimes 0} = 0$.

Given an element $z = (z_1,\ldots,z_d) \in \mathbb{T}^d$ and an element $k = (k_1,\ldots,k_d) \in \mathbb{Z}^d$,
we define
\begin{align*}
    z^{\otimes k} := z_1^{\otimes k_1} \cdots z_d^{\otimes k_d} = k \cdot z,
\end{align*}
where $\cdot$ is the standard inner product.
A \emph{tropical (Laurent) polynomial} is a map $f : \mathbb{T}^d \rightarrow \mathbb{T}$ where there exists a finite set $A \subset \mathbb{Z}^d$ and a set $(a_k)_{k \in A} \in \mathbb{R}^A$ such that
\begin{align*}
    f(z) := \bigoplus_{k \in A} a_k \otimes z^{\otimes k} = \max \{ k \cdot z + a_k : k \in A\}.
\end{align*}
% Given a Laurent polynomial $g(z) = \sum_{k \in A} \lambda_k z^k$ over a valuated field $(K, \nu)$, the tropical polynomial $\bigoplus_{k \in A} \nu(\lambda_k) \otimes z^{\otimes k}$ is called the \emph{tropicalisation} of $g$.
The \emph{tropical hypersurface} of a tropical polynomial $f$ is the set
\begin{align*}
    \mathbb{V}(f) := \left\{z \in \mathbb{R}^d : f \text{ is not differentiable at } z\right\}.
\end{align*}
Equivalently,
$\mathbb{V}(f)$ is the set of points $z \in \mathbb{R}^d$ where $f(z) = a_k \otimes z^{\otimes k} = a_\ell \otimes z^{\otimes \ell}$ for distinct $k,\ell \in A$. 
This idea of tropical vanishing can be extended to collections polynomials in a natural way.

Every tropical hypersurface is a polyhedral object that is part of a larger class of objects called \emph{tropical varieties}, which are defined as follows.
Let $C \subset \mathbb{R}^d$ be a polyhedral complex in $\mathbb{R}^d$ of dimension $k$.
We fix $\widetilde{E}$ to be the set of polytopes of dimension $k$ in $C$ (said to be the \emph{maximal faces} of $C$) and $\widetilde{V}$ to be the set of polytopes of dimension $k-1$ in $C$ (said to be the \emph{ridges} of $C$). 
Throughout the paper, we always assume that our polyhedral complexes are %\emph{connected in codimension 1} (for every pair of $k$-dimensional cells $\sigma,\sigma' \in \widetilde{E}$, there is a chain $\sigma = \sigma_1, \sigma_2, \, \dots \, ,\sigma_s = \sigma'$ for which $\sigma_i$ and $\sigma_{i+1}$ share a common maximal face $\tau_i \in \widetilde{V}$ for all $1 \le i \le s-1$), 
\emph{pure} (every polytope with dimension $k-1$ or less is a face of polytope of maximal dimension) and \emph{rational} (given $L_\sigma$ is the $k$-dimensional linear space parallel to $\sigma \in \widetilde{E}$, the set $L_\sigma \cap \mathbb{Z}^d$ is a rank $k$ lattice).
A map $\omega : \widetilde{E} \rightarrow S$ is said to be a \emph{rational partial weighting} if $S= \mathbb{Q}_{\geq 0}$, a \emph{rational weighting} if $S= \mathbb{Q}_{>0}$, a \emph{partial weighting} if $S= \mathbb{Z}_{\geq 0}$, and a \emph{weighting} if $S= \mathbb{Z}_{>0}$.
Any (rational) partial weighting that is not a (rational) weighting (i.e., it takes the value $\omega(\sigma)=0$ for some maximal face $\sigma$) is said to be a (\emph{rational} resp.) \emph{strictly partial weighting}.
It is important to note that every rational partial weighting can be scaled by some positive integer to form a partial weighting.

We now wish to describe the concept of balanced weightings. 
For each $\tau \in \widetilde{V}$,
fix $L_\tau$ to be the $(k-1)$-dimensional linear space parallel to $\tau$.
It is immediate that if $\sigma \in \widetilde{E}$ contains $\tau$ then $L_\tau \subset L_\sigma$.
Since $C$ is rational,
we have that $L_\tau \cap \mathbb{Z}^d$ is a rank $(k-1)$ lattice.
For every $\sigma \in \widetilde{E}$ and every $\tau \in \widetilde{V}$ with $\tau \subset \sigma$,
we now fix the unique vector $z_{\tau}(\sigma) \in L_\sigma \cap \mathbb{Z}^d$ such that $L_\sigma \cap \mathbb{Z}^d = L_\tau \cap \mathbb{Z}^d + \mathbb{Z} z_{\tau}(\sigma)$ and $x + \lambda z_{\tau}(\sigma) \in \sigma$ for some $x \in \tau$ and some $\lambda >0$.
We now say that a (rational, partial) weighting $\omega$ of $C$ is \emph{balanced} if for every $\tau \in \widetilde{V}$ we have
\begin{align*}
    \sum_{\sigma \supset \tau} \omega(\sigma) z_{\tau}(\sigma) \in L_\tau,
\end{align*}
where the sum runs over all $\sigma \in \widetilde{E}$ that share $\tau$ as a face.
If $k$-dimensional polyhedral complex $C$ has a balanced weighting $\omega :\widetilde{E} \rightarrow \mathbb{Z}_{>0}$, then it is said to be a \emph{tropical variety of dimension $k$}. Under this definition, tropical varieties with codimension $1$ and tropical hypersurfaces are equivalent; see, for example, \cite[Theorem 3.3.5, Proposition 3.3.10]{Mac+Stur:15}.

\begin{remark}
    This differs from the definition given by Maclagan and Sturmfels \cite{Mac+Stur:15}, where a tropical variety is the tropicalisation of an ideal of a Laurent polynomial ring over a valuated field. 
    Maclagan and Sturmfels' definition for a tropical variety is equivalent to our own for tropical hypersurfaces,
    but is otherwise a strictly stronger definition;
    see, for example, \cite[Theorem 3.3.5, Example 4.2.15]{Mac+Stur:15}.
\end{remark}

When discussing tropical varieties, there always exists more than one balanced weighting,
since any balanced weighting can be scaled by a positive integer and remain balanced.
A tropical variety $C$ with a fixed balanced weighting $\omega$ is said to be a \emph{weighted tropical variety},
which we denote by $(C,\omega)$. 

\begin{figure}
        \centering
        \begin{subfigure}[h]{0.45\textwidth}
            \centering
            \includegraphics{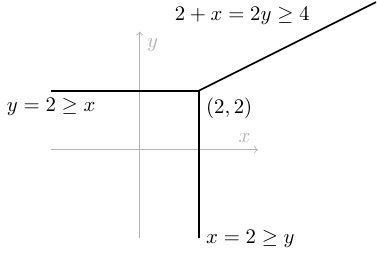}
        \caption{The tropical variety of $f = 2 \otimes x \oplus y^{\otimes 2} \oplus 4$}
        \label{Fig:exs-trop-var-line}
        \end{subfigure}
        %\hfill
        \begin{subfigure}[h]{0.45\textwidth}
            \centering
            \includegraphics{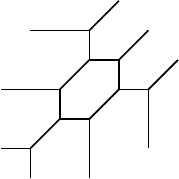}
        \caption{A tropical cubic variety of genus 1}
        \label{Fig:exs-trop-var-cubic}
        \end{subfigure}
        \caption{Examples of tropical varieties from \Cref{Ex:tropical-varieties}}
        \label{Fig:exs-trop-var}
    \end{figure}

\begin{example}\label{Ex:tropical-varieties}
Here are several examples of tropical varieties, including the standard tropical line in $\mathbb{T}^2$. They are depicted in \Cref{Fig:exs-trop-var,fig:tropical_hyperplane_3D}.
    \begin{itemize}
        \item Take the polynomial $f \in \mathbb{T}[x,y]$, defined as $f = 2 \otimes x \oplus y^{\otimes 2} \oplus 4 = \max \{ 2 +x , 2y,4\}$. Then the tropical hypersurface, depicted in \Cref{Fig:exs-trop-var-line}, is defined the  following three equations: 
        \[
        x+2=2y\ge4 , \quad x= 2\ge y , \quad y= 2 \ge x.
        \]
        \item \Cref{Fig:exs-trop-var-cubic} is the tropical variety of a cubic polynomial described in \cite[Example 3.1.8 (3)]{Mac+Stur:15}. Note that it has genus one, and three infinite rays in the west, south and north east directions. 
        \item \Cref{fig:tropical_hyperplane_3D} shows a 2-dimensional tropical in $\mathbb{R}^3$ cut out by the tropical polynomial $f = 0 \oplus x \oplus y \oplus z $. For other visual depictions of higher dimensional tropical varieties see \cite{Mikhalkin+Rau:19}.
    \end{itemize}
\end{example}

\begin{figure}[htp]
        \centering
        \includegraphics[scale=0.75]{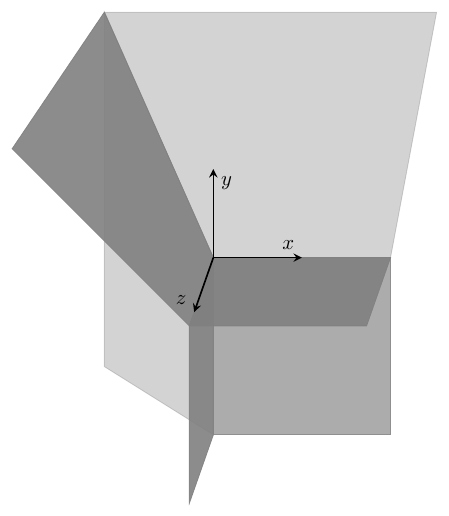}
        \caption{The tropical variety $\mathbb{V}(0 \oplus x \oplus y \oplus z)$ }
        \label{fig:tropical_hyperplane_3D}
    \end{figure}

Let $C \subset \mathbb{R}^d$ be a tropical variety of dimension $k$ with a balanced partial weighting $\omega$ with support $S \not\subset \widetilde{E}$.
Let $\sim_*$ be the symmetric relation on the set $S$ where $\sigma \sim_* \sigma'$ if and only if $\sigma = \sigma'$ or $\tau = \sigma \cap \sigma' \in \widetilde{V}$ and $\tau$ is not contained in any $k$-dimensional polytope contained in the set $S \setminus \{\sigma,\sigma'\}$.
From this, we fix $\sim$ to be the equivalence relation formed from the transitive closure of $\sim_*$.
For any $\sigma \in S$,
the set $\sigma_\sim := \{\sigma' \in S : \sigma' \sim \sigma \}$ is a $k$-dimensional polytope.
Furthermore,
the set $C' := \bigcup_{\sigma \in S} \sigma$ is a tropical variety of dimension $k$ with $k$-dimensional polytopes $\widetilde{E}' := S /\sim$ and balanced weighting $\omega' : \widetilde{E}' \rightarrow \mathbb{Z}_{>0}$ such that $\omega'(\sigma_\sim) = \omega(\sigma)$.
We say that $\omega'$ is the \emph{refinement} of $\omega$,
and $\omega$ is the \emph{coarsening} of $\omega'$.

For two any polyhedral complexes $C_1$ and $C_2$ in $\mathbb{R}^d$ of dimension $k$, 
the set $C = C_1 \cup C_2$, after a suitable refinement is also a polyhedral complex of dimension $k$.
Furthermore, if $C_1$ and $C_2$ are tropical varieties,
then $C$ is also a tropical variety.
To see this,
choose balanced weightings $\omega_1$ and $\omega_2$ for $C_1$ and $C_2$ respectively.
Since $C_1$ and $C_2$ cover $C$,
it follows that any positive linear combination of the extensions of $\omega_1$ and $\omega_2$ is a balanced weighting of $C$.
With this in mind,
we define the following property.

\begin{definition}
A tropical variety $C \subset \mathbb{R}^d$ is \emph{extremal} if it cannot be decomposed into tropical varieties $C_1 ,C_2 \subset \mathbb{R}^d$, both proper subsets of $C$, such that $C = C_1 \cup C_2$.
\end{definition}

Equivalent definitions of extremal tropical varieties are provided by the following result.

\begin{proposition}\label{lem:extre-decomp}
    Let $(C, \omega)$ be a weighted tropical variety in $\mathbb{R}^d$.
    Then the following properties are equivalent:
    \begin{enumerate}
         \item \label{lem:extre-decomp1} $C$ is extremal;
        \item \label{lem:extre-decomp2} for every pair of weighted tropical varieties $(C_1, \omega_1) $, $(C_2, \omega_2)$ in $\mathbb{R}^d$ such that $C = C_1 \cup C_2$ and (after common refinement) $\lambda\omega = \omega_1 + \omega_2$ for some positive integer $\lambda$, there exists rational scalars $\lambda_1,\lambda_2$ such that $(C, \omega) =   (C_i,  \lambda_i \omega_i)$ for $i=1,2$;
        \item \label{lem:extre-decomp3} $\omega$ is the unique balanced weighting of $C$ up to rational scalar multiplication;
        \item \label{lem:extre-decomp4} 
        $C$ contains no proper subset that is also a tropical variety of the same dimension.
    \end{enumerate}
\end{proposition}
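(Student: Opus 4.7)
The plan is to prove the cycle (i) $\Leftrightarrow$ (iv), (iii) $\Leftrightarrow$ (iv), and (ii) $\Leftrightarrow$ (iii). Throughout, the bridge between tropical subvarieties and weightings is the correspondence noted in \Cref{sec:trop_var}: the support of a balanced partial weighting (after coarsening/refinement) is a tropical variety of the same dimension as $C$, while extending a balanced weighting from a subvariety $C' \subseteq C$ by zero yields a balanced partial weighting on $C$.

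First I would establish (i) $\Leftrightarrow$ (iv) by contrapositive. One direction is immediate: a decomposition $C = C_1 \cup C_2$ into proper weighted subvarieties exhibits $C_1 \subsetneq C$ as a proper tropical subvariety, violating (iv). For the converse, given a proper subvariety $C' \subsetneq C$ with balanced weighting $\omega'$ (extended by $0$ to $\widetilde{E}$), set $c := \min_{\sigma \in \widetilde{E}} \omega(\sigma)/\omega'(\sigma)$, which is a positive rational attained on some maximal face of $C'$. Clearing denominators, $q\omega - p\omega'$ is an integer balanced partial weighting that vanishes on some $\sigma_0 \in C'$ but is strictly positive on $\widetilde{E} \setminus C'$. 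Its support determines a proper subvariety $C''$, and by construction $C = C' \cup C''$ with both proper, refuting (i).

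Next I would prove (iii) $\Leftrightarrow$ (iv). The key observation is that (iii) automatically extends to rational balanced partial weightings $\alpha$: scaling $\alpha$ to integer values, for any positive integer $N$ the sum $N\omega + \alpha$ is a balanced weighting, so by (iii) it is a rational multiple of $\omega$, forcing $\alpha$ itself to be a non-negative rational multiple of $\omega$. Hence under (iii), a proper subvariety $C' \subsetneq C$ with balanced weighting $\omega'$ (extended by $0$) would satisfy $\omega' = \mu \omega$ for some $\mu \geq 0$; since $\omega > 0$ everywhere but $\omega' = 0$ on $\widetilde{E} \setminus C'$, this forces $\mu = 0$ and thus $\omega' \equiv 0$, contradicting positivity on $C'$. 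Conversely, given two non-proportional balanced weightings $\omega,\omega'$, the construction from the first paragraph applied to $\omega - c\omega'$ (with $c = \min \omega(\sigma)/\omega'(\sigma)$) yields an integer balanced partial weighting with proper support, producing a proper tropical subvariety and contradicting (iv).

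Finally, (ii) $\Leftrightarrow$ (iii). Assume (iii), and let $(C_1,\omega_1), (C_2,\omega_2)$ satisfy the hypotheses of (ii) with $\lambda\omega = \omega_1 + \omega_2$ after common refinement. Each $\omega_i$ extended by $0$ is a balanced partial weighting on $C$, so by the extension of (iii) to partial weightings above, $\omega_i = \mu_i \omega$ for some $\mu_i \geq 0$; positivity of $\omega_i$ on $C_i \neq \emptyset$ forces $\mu_i > 0$, and since $\omega > 0$ throughout $\widetilde{E}$ this yields $C_i = C$, giving (ii) with $\lambda_i = 1/\mu_i$. For the converse, given any balanced weighting $\omega'$ on $C$, choose a positive integer $\lambda$ large enough that $\omega_2 := \lambda\omega - \omega'$ is a strictly positive integer balanced weighting; then applying (ii) to $(C,\omega')$ and $(C,\omega_2)$ (whose sum is $\lambda\omega$) yields $\omega = \lambda_1 \omega'$, establishing (iii). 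The main technical obstacle throughout is the bookkeeping around common refinements — in particular, that the support-of-partial-weighting construction and the extend-by-zero construction interact cleanly with the equivalence relation $\sim$ of \Cref{sec:trop_var} — but these are routine once the setup is in place.
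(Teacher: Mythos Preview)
Your proof is correct and relies on the same core device as the paper --- subtracting a suitable rational multiple of one balanced (partial) weighting from another to force a zero coordinate and thereby produce a proper tropical subvariety. The paper organises the argument as a single cycle $(i)\Rightarrow(ii)\Rightarrow(iii)\Rightarrow(iv)\Rightarrow(i)$, whereas you prove three pairwise equivalences; your route is marginally cleaner in that the explicit extension of (iii) to \emph{partial} weightings lets you bypass the case analysis the paper performs in its $(i)\Rightarrow(ii)$ step (where one of $C_1,C_2$ may equal $C$).
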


\begin{remark}
Let us highlight that the question of extremal decomposition becomes computationally much more difficult when we deal with positive integer weights.
To showcase this,
we define the following property for a weighted tropical variety $(C,\omega)$:
\begin{itemize}
    \item[($\ast$)] For every pair of weighted tropical varieties $(C_1, \omega_1) $, $(C_2, \omega_2)$ in $\mathbb{R}^d$ such that $C = C_1 \cup C_2$ and (after common refinement) $\omega = \omega_1 + \omega_2$, there exists positive integers $\lambda_1,\lambda_2$ such that $(C, \omega) =   (C_i,  \lambda_i \omega_i)$ for $i=1,2$.
\end{itemize}
It is important here to note that if we allow $\omega_1,\omega_2$ to be rational weightings and we allow $\lambda_1,\lambda_2$ to be rational, property ($\ast$) becomes equivalent to property \ref{lem:extre-decomp2} of \Cref{lem:extre-decomp}.
However,
property ($\ast$) is a stronger condition than property \ref{lem:extre-decomp2} of \Cref{lem:extre-decomp}.
For an example of this,
observe the weighted tropical variety given in \Cref{fig:Red-Trop-Var-Bad-W}.
\begin{figure}[ht]
    \centering
    \includegraphics[width= 0.5 \textwidth]{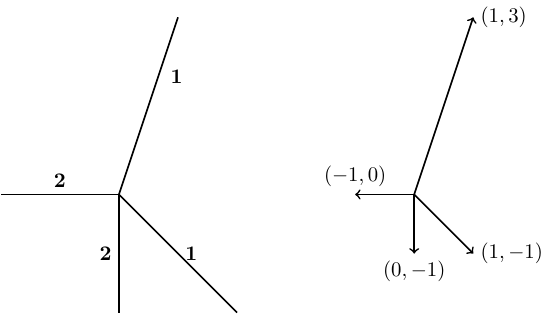}
    \caption{A weighted tropical variety which can not be decomposed.} 
    \label{fig:Red-Trop-Var-Bad-W}
\end{figure}

\noindent
While the tropical variety in \Cref{fig:Red-Trop-Var-Bad-W} can be decomposed into proper tropical subvarieties (and so does not satisfy property \ref{lem:extre-decomp2} of \Cref{lem:extre-decomp}),
there is no decomposition that respects its fixed balanced weighting (and so the tropical variety does satisfy property ($\ast$)).
Property ($\ast$) does fail to hold, however, if the balanced weighting given in \Cref{fig:Red-Trop-Var-Bad-W} is scaled by $3$; see \Cref{fig:Red-Trop-Var-Decomp}.
\begin{figure}[ht]
    \centering
    \includegraphics[width=0.65\textwidth]{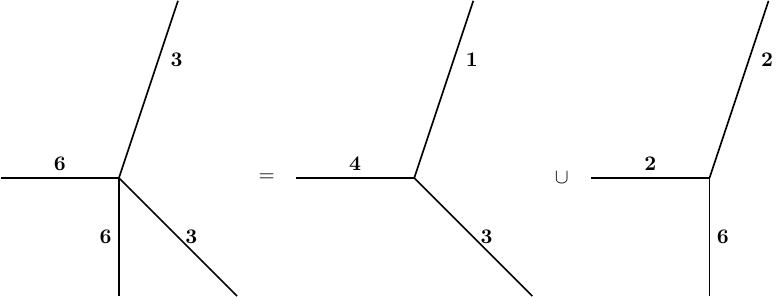}
    \caption{The tropical variety from \Cref{fig:Red-Trop-Var-Bad-W} with weighting scaled by 3 and the corresponding decomposition.}
    \label{fig:Red-Trop-Var-Decomp}
\end{figure}

We note here that property ($\ast$) is closely related to decomposing tropical polynomials.
In \cite{crowell2019tropical},
Crowell observed that a tropical polynomial $f$ has no decomposition $f = g \otimes h$ if and only if property ($\ast$) holds for the tropical hypersurface $(\mathbb{V}(f),\omega)$,
where $\omega$ is the induced balanced weighting on $\mathbb{V}(f)$ (see \Cref{p:naturalweighting} for the precise definition). 
\end{remark}

\begin{proof}[Proof of \Cref{lem:extre-decomp} ]
\ref{lem:extre-decomp1} $\implies$ \ref{lem:extre-decomp2}:
Suppose that \ref{lem:extre-decomp2} does not hold,
and fix $(C_1,\omega_1)$ and $(C_2,\omega_2)$ to be witnesses to this.
If both $C_1,C_2$ are proper then $C$ is not extremal.
Suppose that $C_1 = C$ (and thus $C_2 \subset C$).
We note here that the common refinement of $\omega_2$ is linearly independent of $\omega_1$ whether or not $C_2 = C$.
For sufficiently large $t>0$ the vector $t \omega_1 - \omega_2$ has only positive coordinates and for sufficiently small $t>0$ the vector $t \omega_1 - \omega_2$ has some negative coordinates.
Hence,
there exists $\mu_1,\mu_2 > 0$ so that $\omega_3 := \mu_1 \omega_1 - \mu_2 \omega_2$ is a balanced strictly partial weighting of $C$ (here we have chosen $t= \mu_1/\mu_2$ to achieve a rational strictly partial weighting and then scaled by $\mu_2$ to obtain integer weights for each maximal face).
Fix $C_3$ to be the proper tropical variety in $C$ which is the support of $\omega_3$.
If $C_2 \neq C$ then we observe from the zeroes of $\omega_2,\omega_3$ that $C=C_2 \cup C_3$ as required.
If $C_2 = C$,
then we replace $C_2$ with $C_3$ and repeat the above argument to obtain the desired decomposition.

\ref{lem:extre-decomp2} $\implies$ \ref{lem:extre-decomp3}:
Suppose that \ref{lem:extre-decomp2} holds.
Let $\omega'$ be a balanced weighting of $C$.
Choose any positive integer $\mu$ such that $\omega_1 := \mu \omega - \omega'$ is a balanced weighting of $C$.
By fixing $C_1=C_2=C$ and $\omega_2 = \omega'$,
we have that $\omega = \omega_1 + \omega_2$ and $C = C_1 \cup C_2$.
Hence, both $\omega_1,\omega_2$ are scaled copies of $\omega$.
It is now immediate that $\omega' = \lambda \omega$ for some rational scalar $\lambda$.

\ref{lem:extre-decomp3} $\implies$ \ref{lem:extre-decomp4}:
Suppose that $C$ contains a proper tropical variety $C'$ of the same dimension.
Fix $\rho$ to be a balanced weighting of $C'$,
and fix $\rho'$ to be its extension to a strictly partial balanced weighting of $C$.
As $\rho'$ has zero coordinates and $\omega$ does not,
they are linearly independent.
Hence $\omega + \rho'$ is a balanced weighting of $C$ that is linearly independent of $\omega$.

\ref{lem:extre-decomp4} $\implies$ \ref{lem:extre-decomp1}:
This is immediate.
\end{proof}  

\begin{example}\label{Ex:extremal-trop-var}
    The tropical variety in \Cref{Fig:extremal-trop-quadratic} is extremal. A weighting $\omega$ must be balanced with respect to the primitive integer vectors
    \begin{align*}
        P_1 = \{(1,1),(-1,0),(0,-1)\}, \qquad P_2=\{(-1,-1),(1,0),(0,1)\}.   
    \end{align*}
    As $P_1 = -P_2$ we only need to describe $P_1$ and multiply weights by $-1$ for $P_2$. Denote by $\{\sigma_1, \sigma_2, \sigma_3\}$ the codimension one cones corresponding to the vectors $\{(1,1),(-1,0),(0,-1)\}$, as per the figure. For a weighting to be balanced it must satisfy $\omega(\sigma_1)(1,1) + \omega(\sigma_2)(-1,0) +\omega(\sigma_3)(0,-1) = (0,0)$. Though, it can be seen that this implies $\omega(\sigma_1) = \omega(\sigma_2) = \omega(\sigma_3)$. Therefore, there is a unique weighting of the variety up to scaling, and hence extremal.   
    \begin{figure}
        \centering
        \includegraphics{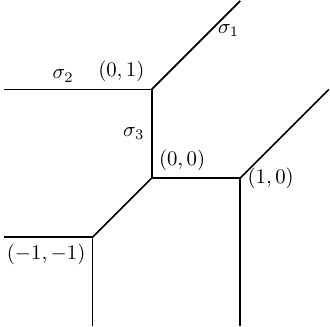}
        \caption{An extremal tropical variety}
        \label{Fig:extremal-trop-quadratic}
    \end{figure}
\end{example}

\begin{example}
In \Cref{Fig:Reducible-Trop-Var} there is a description of a reducible tropical variety and two extremal decompositions. The weightings for each variety are not noted in the figure, but one suitable weighting would be all weights equal to one.
\begin{figure}
    \centering
    \includegraphics[width=\textwidth]{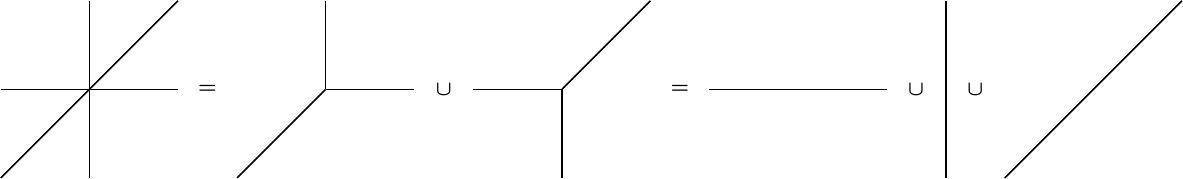}
    \caption{A reducible tropical variety and two extremal decompositions.}
    \label{Fig:Reducible-Trop-Var}
\end{figure}
See \cite[Example 2.7]{Mac+Rin:18} for more details of this variety, where it is utilised to illustrate that tropical ideals carry strictly more information than their tropical varieties. 
% Removing the use of irreducible 
% Note that the extremal tropical variety presented in \Cref{Ex:extremal-trop-var} is also irreducible. 
\end{example}

\subsection{Rigidity and infinitesimal rigidity of frameworks}\label{subsec:rigid}

Given a (finite simple) graph $G=(V,E)$,
a \emph{$d$-dimensional realisation} of $G$ is a map $p:V \rightarrow \mathbb{R}^d$.
We say that a graph-realisation pair $(G,p)$ is a \emph{framework} in $\mathbb{R}^d$.
Two frameworks $(G,p)$ and $(G,q)$ in $\mathbb{R}^d$ are said to be \emph{equivalent} if for each edge $uv \in E$ we have
\begin{align}\label{eq:equiv}
    \|p(u)-p(v)\| = \|q(u)-q(v)\|.
\end{align}
The frameworks $(G,p),(G,q)$ are said to be \emph{congruent} if \cref{eq:equiv} holds for any pair of vertices $u,v \in V$.
A framework $(G,p)$ is now said to be \emph{rigid} if there exists $\varepsilon >0$ such that every equivalent framework $(G,q)$ with $\|p(v)-q(v)\| < \varepsilon$ for each $v \in V$ is also congruent to $(G,p)$.

Determining whether a framework is rigid is NP-Hard when $d \geq 2$ \cite{saxe1980embeddability}.
To proceed we employ a stronger notion of rigidity which is more tractable using the \emph{rigidity matrix}. This is 
the Jacobian matrix of the quadratic congruent relations describing each edge of the framework.
To be specific, the rigidity matrix of a framework $(G,p)$ is the $|E| \times d|V|$ matrix $R(G,p)$ with the row labelled $uv \in E$ given by
\begin{align*}
	\Big[ \quad 0 \quad \cdots  \quad 0  \quad \overbrace{(p(u)-p(v))^\top}^{u} \quad 0  \quad \cdots  \quad 0  \quad \overbrace{(p(v)-p(u))^\top}^{v} \quad 0  \quad \cdots  \quad 0 \quad \Big].
\end{align*}
It is immediate that $\rank R(G,p) \leq |E|$.
It is less obvious that we also have that $\rank R(G,p) \leq d|V| - \binom{d+1}{2},$ so long as $(G,p)$ has affine dimension $d$, i.e., the affine span of the set $\{ p(v) :v \in V\}$ has dimension $d$.
This is a consequence of the set of frameworks equivalent to $(G,p)$ being invariant under the group of isometries of $\mathbb{R}^d$,
which in turn implies the tangent space of the isometry group at the identity map is a subspace of the kernel of $R(G,p)$. 
To see this, we first note that the tangent space of the isometry group at the identity is exactly the direct product of the skew-symmetric matrices and constant vector maps. Second, we define  $u_i := (e_i)_{v \in V}$ for each $i \in \{1,\ldots,d\}$ (with $e_1,\ldots,e_d$ being a basis of $\mathbb{R}^d$) and $\Skew_{d \times d}$ to be the linear space of skew-symmetric $d \times d$ matrices.
From this, we define the linear spaces
\begin{align*}
    R := \Big\{(M(p(v)))_{v \in V} : M \in \Skew_{d \times d} \Big\}, \qquad T:= \left\{ \sum_{i=1}^d a_i u_i : a_1,\ldots,a_d \in \mathbb{R} \right\}.
\end{align*}
It is clear that $T$ has dimension $d$ and is contained in $\ker R(G,p)$.
Since every skew-symmetric matrix $M$ has the property that $x^\top M x = 0$,
the space $R$ is contained in $\ker R(G,p)$ also.
If $(G,p)$ also has affine dimension $d$ then $R \cap T = \{0\}$ and $\dim R = \binom{d}{2}$,
and hence
\begin{align*}
    \dim \ker R(G,p) \geq \dim R + \dim T = \binom{d}{2} + d = \binom{d+1}{2}.
\end{align*}
Any element of $\ker R(G,p)$ is said to be an \emph{infinitesimal flex},
and any element of $R + T$ is said to be a \emph{trivial infinitesimal flex}.
We define a framework $(G,p)$ to be \emph{infinitesimally rigid} if every infinitesimal flex of $(G,p)$ is trivial.
Equivalently,
$(G,p)$ is infinitesimally rigid if and only if either $\rank R(G,p) = d|V| - \binom{d+1}{2}$, or $(G,p)$ is a simplex (i.e., $G$ is a complete graph with $k \leq d+1$ vertices and $(G,p)$ has affine dimension $k-1$).
Any framework that is not infinitesimally rigid is said to be \emph{infinitesimally flexible}.
We refer an interested reader to \cite[Section 2]{GraverServatius} for more details about infinitesimal rigidity and the claims made within this section.

Infinitesimal rigidity is much easier to check for than rigidity since it (usually) requires checking the rank of an easily computable matrix.
It is (usually) a sufficient condition for rigidity also.

\begin{theorem}[\cite{AsimowRothI,AsimowRothII}]\label{t:asimowroth}
    If a framework $(G,p)$ is infinitesimally rigid then it is rigid.
    If $(G,p)$ is rigid and has maximal rank -- i.e., $\rank R(G,p) \geq \rank R(G,q)$ for all other choices of $q \in (\mathbb{R}^d)^V$ -- then it is infinitesimally rigid.
\end{theorem}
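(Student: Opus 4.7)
The plan is to reduce both implications to applications of the constant rank theorem, applied to the edge-length-squared map $f:(\mathbb{R}^d)^V \to \mathbb{R}^E$ defined by $f(q)_{uv} := \|q(u)-q(v)\|^2$ together with its analogue $F$ in which $G$ is replaced by the complete graph $K_V$ on $V$. A direct computation shows that the differential $df_p$ equals, up to a factor of $2$, the rigidity matrix $R(G,p)$. Throughout I will assume $(G,p)$ has affine dimension $d$; the low-dimensional simplex cases must be handled separately by direct inspection (these are the reason for the simplex clause in the definition of infinitesimal rigidity).

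For the first implication, I would observe that $(G,p)$ is rigid if and only if there is a neighborhood $U$ of $p$ in which $f^{-1}(f(p)) \cap U = F^{-1}(F(p)) \cap U$; the inclusion $\supseteq$ is automatic since $E \subseteq E(K_V)$. Infinitesimal rigidity asserts that $\ker R(G,p) = R+T$, and a short calculation (together with the fact that equivalent realisations of $K_V$ are congruent) shows that $\ker R(K_V,p) = R+T$ as well. Hence $df_p$ and $dF_p$ have the same kernel. Applying the constant rank theorem in a neighbourhood on which both $f$ and $F$ attain constant rank equal to their values at $p$, the fibres $f^{-1}(f(p))$ and $F^{-1}(F(p))$ are smooth submanifolds of equal dimension near $p$; since one contains the other, they coincide locally, yielding rigidity.

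For the converse, the maximal rank hypothesis is precisely what allows the implicit function theorem to apply. The function $q \mapsto \rank R(G,q)$ is lower semicontinuous, so if $(G,p)$ has maximal rank then $\rank R(G,q)$ is constant in a neighbourhood of $p$. The constant rank theorem then makes $f^{-1}(f(p))$ a smooth submanifold of dimension $d|V| - \rank R(G,p)$ near $p$. Rigidity identifies this submanifold locally with the isometry orbit of $p$, whose dimension is $\binom{d+1}{2}$. Equating the two dimensions gives $\rank R(G,p) = d|V| - \binom{d+1}{2}$, which is exactly infinitesimal rigidity.

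The principal technical hurdle I anticipate is the role of the maximal rank hypothesis in the second implication. Without it, the rank of $R(G,q)$ could strictly exceed $\rank R(G,p)$ on every punctured neighbourhood of $p$, in which case $f^{-1}(f(p))$ need not be a smooth manifold near $p$ and the dimension count collapses; indeed, the classical counterexamples to the unconditional converse are precisely such "singular" configurations. A secondary subtlety is justifying that $\ker R(K_V,p) = R+T$ when $(G,p)$ has affine dimension $d$, which reduces to the standard fact that the isometry group acts with finite stabilisers on affinely spanning configurations, so that the tangent space to its orbit has the expected dimension $\binom{d+1}{2}$.
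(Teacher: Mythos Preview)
The paper does not prove this statement; it is quoted as background from \cite{AsimowRothI,AsimowRothII}. Your outline is essentially the original Asimow--Roth argument via the constant rank theorem applied to the edge-length-squared map, and it is correct. The one point worth making explicit in the first implication is why $f$ has constant rank near $p$: infinitesimal rigidity means $\rank R(G,p)$ attains the universal upper bound $d|V|-\binom{d+1}{2}$, and lower semicontinuity of rank together with that bound (valid throughout the open set of affinely $d$-spanning realisations) forces the rank to be locally constant, so the constant rank theorem indeed applies to both $f$ and $F$.
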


The maximal rank condition of \Cref{t:asimowroth} unfortunately cannot be removed;
see \Cref{fig:prism} for an example of a framework in $\mathbb{R}^2$ that is rigid but (since it does not have maximal rank rigidity matrix) is not infinitesimally rigid.

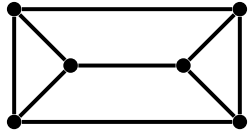
\begin{figure}[htp]
	\begin{center}
        \begin{tikzpicture}[scale=0.75]
			\node[vertex] (11) at (-1,0) {};
			\node[vertex] (21) at (-2,1) {};
			\node[vertex] (31) at (-2,-1) {};
			
			\node[vertex] (12) at (1,0) {};
			\node[vertex] (22) at (2,1) {};
			\node[vertex] (32) at (2,-1) {};
			
			\draw[edge] (11)edge(12);
			\draw[edge] (21)edge(22);
			\draw[edge] (31)edge(32);
			
			\draw[edge] (11)edge(21);
			\draw[edge] (21)edge(31);
			\draw[edge] (31)edge(11);
			
			\draw[edge] (12)edge(22);
			\draw[edge] (22)edge(32);
			\draw[edge] (32)edge(12);
		\end{tikzpicture}
	\end{center}
	\caption{A framework that is rigid but not infinitesimally rigid.}\label{fig:prism}
\end{figure}

It should be noted that infinitesimal rigidity is a generic property,
in that, given two generic\footnote{Here generic will mean that the coordinates of the realisation $p$ (when considered as a vector in $\mathbb{R}^{d|V|}$) will form an algebraically independent set of $d|V|$ elements.} frameworks $(G,p)$ and $(G,q)$ in $\mathbb{R}^d$,
one will be infinitesimally rigid if and only if the other is infinitesimally rigid.
Hence the set of infinitesimally rigid $d$-dimensional realisations of a graph will either be an \text{orange}{the} empty subset of $(\mathbb{R}^d)^V$ (in which case we say that the graph is \emph{flexible in $\mathbb{R}^d$}),
or will have Lebesgue measure zero complement (in which case we say that the graph is \emph{rigid in $\mathbb{R}^d$}).

It is natural to ask what graph properties are necessary and/or sufficient for graph rigidity/flexibility.
With this in mind, we proceed with the following concept.
For a graph $G=(V,E)$ and any subset $X \subset V$,
we define $i_G(X)$ to be the number of edges in the subgraph of $G$ induced by the vertex set $X$.
Given non-negative integers $d,k$ with $k \in \{d+1,\binom{d+1}{2}\}$,
we say that a graph $G$ is \emph{$(d,k)$-sparse} if $i_G(X) \leq d|X| - k$ for all $|X| \geq d + 2$.
We say that $G$ is \emph{$(d,k)$-tight} if it is $(d,k))$-sparse and $|E|=d|V|-k$.

\begin{theorem}[\cite{maxwell}]\label{t:maxwell}
    Let $G=(V,E)$ be a rigid graph in $\mathbb{R}^d$ with at least $d$ vertices.
    Then $G$ contains a spanning $(d,\binom{d+1}{2})$-tight subgraph $H$ that is also rigid in $\mathbb{R}^d$.
    Hence $|E|=d|V|-\binom{d+1}{2}$.
\end{theorem}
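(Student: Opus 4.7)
The plan is to produce the subgraph $H$ by extracting a row basis of the rigidity matrix at a generic realisation of $G$, and then to verify $(d,\binom{d+1}{2})$-sparsity via an induced-subframework rank argument. First I would fix a generic realisation $p : V \rightarrow \mathbb{R}^d$. Since $G$ is rigid and infinitesimal rigidity is a generic property, $(G,p)$ is infinitesimally rigid; assuming $|V| \geq d+1$ (the case $|V|=d$ forces $G = K_d$ and is immediate), $(G,p)$ has affine dimension $d$, and hence the rank discussion preceding \Cref{t:asimowroth} gives $\rank R(G,p) = d|V|-\binom{d+1}{2}$. I would then pick a subset $F \subseteq E$ of that size whose rows form a basis of the row space of $R(G,p)$, and set $H := (V,F)$. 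Then $H$ is spanning with $|E(H)|=d|V|-\binom{d+1}{2}$, and since $R(H,p)$ has the same rank as $R(G,p)$, the framework $(H,p)$ is infinitesimally rigid, so $H$ is rigid in $\mathbb{R}^d$.

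The main step is sparsity: for every $X \subseteq V$ with $|X| \geq d+2$ I must show $i_H(X) \leq d|X|-\binom{d+1}{2}$. The key observation is that the rows of $R(H,p)$ indexed by edges of the induced subgraph $H[X]$ are supported only on the $d|X|$ columns corresponding to $X$, so after deleting zero columns they form precisely the rigidity matrix $R(H[X],p|_X)$. These rows sit inside a linearly independent set, so their rank equals $i_H(X)$; meanwhile, generic choice of $p$ ensures that $p|_X$ has affine dimension $d$, so the standard upper bound gives $\rank R(H[X],p|_X) \leq d|X|-\binom{d+1}{2}$. Combining these yields the required inequality. The theorem's final clause then follows from $H \subseteq G$ together with $|E(H)|=d|V|-\binom{d+1}{2}$.

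The main obstacle I anticipate is the genericity bookkeeping in the sparsity step: one must guarantee that a single realisation $p$ simultaneously attains maximal rank for $R(G,p)$ and keeps each restriction $p|_X$ (for every $X$ with $|X|\geq d+1$) in sufficiently general position to satisfy the rank bound on $R(H[X],p|_X)$. This works because each such condition is the complement of a proper algebraic subvariety of $(\mathbb{R}^d)^V$, and only finitely many subsets $X$ are involved, so their common intersection is Zariski-open and nonempty (indeed dense), and on this dense set one can carry out the construction of $H$ uniformly.
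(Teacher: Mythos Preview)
The paper does not prove \Cref{t:maxwell}; it is stated as a cited classical result (attributed to Maxwell) and used as a black box. Your argument is the standard modern proof and is correct: extracting a row basis of $R(G,p)$ at a generic realisation $p$ yields the spanning subgraph $H$ with $|E(H)|=d|V|-\binom{d+1}{2}$ and $(H,p)$ infinitesimally rigid, while sparsity follows because the rows indexed by any induced subgraph $H[X]$ are simultaneously linearly independent (as a subset of a basis) and bounded by $\rank R(H[X],p|_X)\leq d|X|-\binom{d+1}{2}$ once $p|_X$ has affine dimension $d$. Your genericity bookkeeping is also fine, since only finitely many Zariski-open conditions are being intersected. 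One small remark: the displayed conclusion ``$|E|=d|V|-\binom{d+1}{2}$'' in the paper is evidently a typo for $|E|\geq d|V|-\binom{d+1}{2}$, which is exactly what your final sentence derives.
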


Since $(1,1)$-tight graphs are trees and generic rigidity in $\mathbb{R}^1$ is equivalent to connectivity, the converse of \Cref{t:maxwell} holds for $d=1$.
As was proven by Pollaczek-Geiringer \cite{HPG} (and later rediscovered by Laman \cite{laman}),
the converse of \Cref{t:maxwell} holds for $d=2$.

\begin{theorem}\label{t:laman}
    A graph is rigid in $\mathbb{R}^2$ if and only if either it is a single vertex or it contains a $(2,3)$-tight\footnote{$(2,3)$-tight graphs are also known as \emph{Laman graphs} in various literature.} spanning subgraph.
\end{theorem}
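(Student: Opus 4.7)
The plan is to prove both directions of the equivalence separately. For the forward direction, if $G$ is rigid in $\mathbb{R}^2$ with at least two vertices, then by \Cref{t:maxwell} applied with $d=2$ (so $\binom{d+1}{2}=3$), $G$ contains a spanning $(2,3)$-tight subgraph that is rigid in $\mathbb{R}^2$. This direction is therefore an immediate corollary of Maxwell's theorem. Conversely, adding edges to a rigid framework can only increase the rank of the rigidity matrix, so adding edges preserves rigidity of a graph; hence it suffices to show that every $(2,3)$-tight graph is rigid in $\mathbb{R}^2$.

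To prove the remaining direction, I would use a Henneberg-style inductive construction. The base case is the triangle $K_3$, which is the smallest $(2,3)$-tight graph and is clearly rigid in $\mathbb{R}^2$ (any generic realisation of $K_3$ is a simplex and thus infinitesimally rigid by definition). For the inductive step, I would first show by a degree-counting argument that any $(2,3)$-tight graph $G$ on $n \geq 4$ vertices contains a vertex of degree $2$ or $3$: since $|E|=2n-3$, the sum of degrees is $4n-6$, so the minimum degree is at most $3$, and degree $1$ is ruled out by the sparsity condition applied to the neighbours of such a vertex. Second, I would show that deleting a degree-$2$ vertex (a Henneberg type I reverse move), or deleting a degree-$3$ vertex and reinserting an appropriately chosen edge between two of its neighbours (a Henneberg type II reverse move), produces a smaller $(2,3)$-tight graph. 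Iterating this produces a construction of $G$ from $K_3$ via a sequence of \emph{Henneberg moves}: type I adds a new vertex of degree $2$, and type II subdivides an edge $uv$ by a new vertex $w$ also joined to a third vertex $x$.

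The main step — and the main obstacle — is to show that each Henneberg move preserves generic infinitesimal rigidity in $\mathbb{R}^2$. The type I move is straightforward: given a generic rigid realisation of the smaller graph, placing the new degree-$2$ vertex at any point off the line through its two neighbours keeps the rigidity matrix of maximum rank, by a direct rank analysis of the two new rows. The type II move is the delicate one. Here one must show that if $(G-w+uv,p)$ is infinitesimally rigid at a generic $p$, then there exists a placement of $w$ on the line through $p(u)$ and $p(v)$ (or more generally, a small perturbation thereof) making $(G,q)$ infinitesimally rigid. The classical argument exhibits an explicit infinitesimal flex in the case where $w$ is placed collinearly with $u,v$, and then uses that infinitesimal rigidity is an open condition on generic realisations to conclude that a nearby generic placement of $w$ yields an infinitesimally rigid framework; then \Cref{t:asimowroth} gives the corresponding rigidity statement.

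Combining the base case, the Henneberg construction, and the preservation of rigidity under both moves, an induction on $|V|$ yields that every $(2,3)$-tight graph is rigid in $\mathbb{R}^2$, completing the proof.
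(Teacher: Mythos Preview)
The paper does not prove \Cref{t:laman} at all: it is stated as a classical result due to Pollaczek-Geiringer (and later Laman), with citations but no argument. So there is no ``paper's own proof'' to compare against; you are supplying a proof where the authors only quote one.

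Your outline is the standard Henneberg approach and is correct in its broad strokes. A few points deserve tightening. First, your justification for excluding degree-$1$ vertices is garbled: the right argument is that if $v$ has degree $1$ in a $(2,3)$-tight graph on $n\geq 4$ vertices, then $X=V\setminus\{v\}$ carries $2n-4>2|X|-3$ edges, violating sparsity (with the $n=4$ case handled directly). Second, and more substantively, you assert without proof that a reverse Henneberg~II move is always available: given a degree-$3$ vertex $w$ with neighbours $u_1,u_2,u_3$, one must show that some pair $u_iu_j$ can be added to $G-w$ without violating $(2,3)$-sparsity. This combinatorial lemma is the crux of the construction and requires a short but genuine argument (typically by showing that if all three candidate edges fail, the resulting overcounts force a violation of sparsity in $G$ itself). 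Third, your description of why the forward Henneberg~II move preserves infinitesimal rigidity is slightly off: the usual argument places $w$ on the open segment between $p(u)$ and $p(v)$ and observes that the rigidity matrix of $(G,q)$ then has the same rank as that of $(G-w+uv,p)$ after an explicit row reduction, so no perturbation or ``explicit flex'' is needed at that stage.
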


This is, however, not true when $d \geq 3$.
For example,
the graph in \Cref{fig:db} (known as the double-banana graph) is $(3,6)$-tight but very clearly flexible in $\mathbb{R}^3$ since it has a separating set of size 2  which acts like a hinge for any generic 3-dimensional realisation.
It is currently an open problem as to what an exact combinatorial characterisation should be for higher dimensional (i.e., $d \geq 3$) realisations.

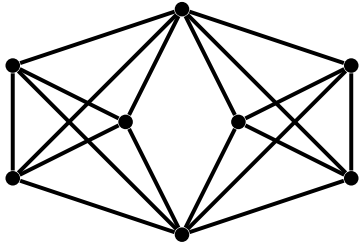
\begin{figure}[htp]
	\begin{center}
        \begin{tikzpicture}[scale=0.75]
			\node[vertex] (top) at (0,2) {};
			\node[vertex] (bottom) at (0,-2) {};

			\node[vertex] (11) at (-1,0) {};
			\node[vertex] (21) at (-3,1) {};
			\node[vertex] (31) at (-3,-1) {};
			
			\node[vertex] (12) at (1,0) {};
			\node[vertex] (22) at (3,1) {};
			\node[vertex] (32) at (3,-1) {};
			
			\draw[edge] (11)edge(top);
			\draw[edge] (21)edge(top);
			\draw[edge] (31)edge(top);
			
			\draw[edge] (top)edge(12);
			\draw[edge] (top)edge(22);
			\draw[edge] (top)edge(32);
   
			\draw[edge] (11)edge(bottom);
			\draw[edge] (21)edge(bottom);
			\draw[edge] (31)edge(bottom);
			
			\draw[edge] (bottom)edge(12);
			\draw[edge] (bottom)edge(22);
			\draw[edge] (bottom)edge(32);
			
			\draw[edge] (11)edge(21);
			\draw[edge] (21)edge(31);
			\draw[edge] (31)edge(11);
			
			\draw[edge] (12)edge(22);
			\draw[edge] (22)edge(32);
			\draw[edge] (32)edge(12);
		\end{tikzpicture}
	\end{center}
	\caption{The double-banana graph.}\label{fig:db}
\end{figure}

\subsection{Direction rigidity and parallel redrawings}\label{subsec:parallelredraw}

Given a pair of frameworks $(G,p)$ and $(G,q)$,
we say that $(G,q)$ is \emph{direction equivalent} to $(G,p)$ if for each edge $uv \in E$ there exists $\lambda_{uv} \in \mathbb{R}$ such that
\begin{align*}
    q(u)-q(v) = \lambda_{uv}(p(u)-p(v)).
\end{align*}
In some literature \cite{ww96,sm23},
any such direction-equivalent framework $(G,q)$ is said to be a \emph{parallel redrawing} of $(G,p)$.
A framework $(G,q)$ is \emph{homothetic} to $(G,p)$ if there exists a point $z$ and a scalar $\lambda \in \mathbb{R}$ such that $q(v) = \lambda p(v) +z$ for each $v \in V$.
If two frameworks are homothetic then they are also direction equivalent.
A framework $(G,p)$ is now said to be \emph{direction rigid} if either $|V|=1$, or $(G,p)$ has an affine dimension greater than 0 and every framework that is direction equivalent to $(G,p)$ is homothetic to $(G,p)$;
any framework that is not direction rigid is said to be \emph{direction flexible}.

Given a framework $(G,p)$ in $\mathbb{R}^d$,
we define $C(G,p)$ to be the set of all $d$-dimensional realisations $q$ where $(G,q)$ is direction equivalent to $(G,p)$,
and we define $T(G,p)$ to be the set of all $d$-dimensional realisations $q$ where $(G,q)$ is homothetic to $(G,p)$.
It is rather easy to see that $C(G,p)$ is linear space with $T(G,p)$ as a $(d+1)$-dimensional linear subspace.\footnote{If $(G,p)$ has an affine dimension of 0 then $\dim T(G,p) = d$, however this is a rather trivial case.}
Hence a framework $(G,p)$ is direction rigid if and only if $\dim C(G,p) = d+1$.

In contrast to the standard edge-length rigidity case,
direction rigidity has a known combinatorial characterisation for generic realisations in all dimensions.

\begin{theorem}[\cite{ww96}]
    Let $(G,p)$ be a framework in $\mathbb{R}^d$ with at least two vertices.
    If $(G,p)$ is direction-rigid then $G$ contains a spanning $(d,d+1)$-tight subgraph.
    Conversely,
    if $(G,p)$ is generic and $G$ contains a spanning $(d,d+1)$-tight subgraph,
    then $(G,p)$ is direction-rigid.
\end{theorem}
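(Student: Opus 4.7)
The plan is to follow the standard strategy for proving Laman-type theorems, adapted to parallel redrawings. The main object is a direction rigidity matrix $M(G,p)$ whose right kernel coincides with the linear space $C(G,p)$ of direction-equivalent realisations; each edge $uv$ contributes rows encoding the constraint that $q(u)-q(v)$ lies in $\mathrm{span}(p(u)-p(v))$, obtained by dotting against a basis of the orthogonal complement. The homothety space $T(G,p)$ of dimension $d+1$ always sits inside $\ker M(G,p)$, so $(G,p)$ is direction rigid precisely when $\rank M(G,p)$ attains its maximum value $d|V|-(d+1)$.

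For the forward implication I would use a Maxwell-style row-selection argument, analogous to the proof of \Cref{t:maxwell}. Assuming direction rigidity, select a maximal linearly independent family of row-blocks of $M(G,p)$; a counting argument then shows that the edges indexing these blocks form a spanning subgraph $H \subseteq G$ with $|E(H)| = d|V|-(d+1)$. Applying the same rank-versus-row-count bound to every induced subgraph $H[X]$ with $|X| \geq d+2$ yields the sparsity condition $i_H(X) \leq d|X|-(d+1)$, so $H$ is $(d,d+1)$-tight.

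For the converse I would invoke an inductive characterisation of $(d,d+1)$-tight graphs via Henneberg-type moves (vertex additions and edge-splits that preserve $(d,d+1)$-tightness), and verify that each such combinatorial operation preserves direction rigidity under a generic extension of the realisation. Starting from a small base case that is manifestly direction rigid, the induction builds up to any $(d,d+1)$-tight graph $H$. Since $H$ is a spanning subgraph of $G$, direction rigidity of $(H,p|_H)$ forces direction rigidity of $(G,p)$.

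The key obstacle I anticipate lies in the converse: each inductive move requires exhibiting an explicit witness realisation at which a specific square submatrix of $M$ has non-zero determinant, after which Zariski openness propagates full rank to all generic realisations. The repertoire of admissible $(d,d+1)$-preserving moves and the corresponding determinantal identities becomes significantly more elaborate as $d$ grows, and this is where clean linear-algebraic bookkeeping is replaced by genuine combinatorial-geometric input.
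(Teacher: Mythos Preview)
The paper does not give its own proof of this result; it is quoted directly from \cite{ww96}. So there is nothing in the paper to compare your argument against, and your outline follows the standard template used in the original reference.

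There is, however, a genuine gap in your forward-direction counting once $d>2$. In your matrix $M(G,p)$ each edge contributes a block of $d-1$ rows, one for each basis vector of the orthogonal complement of $p(u)-p(v)$. If you select a maximal independent family of such row-blocks achieving the target rank $d|V|-(d+1)$, the number of edges you pick up is $\tfrac{d|V|-(d+1)}{d-1}$, not $d|V|-(d+1)$; the two coincide only when $d=2$, which is why the argument feels right. A quick sanity check exposes the problem: a generic triangle in $\mathbb{R}^3$ is direction rigid (the cycle relation forces all three edge scalars $\lambda_{uv}$ to agree, so every parallel redrawing is a homothety), yet no simple spanning subgraph on three vertices can have the $3\cdot 3-4=5$ edges required to be $(3,4)$-tight. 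Whiteley's actual argument is carried out for multigraphs in which a geometric edge may appear with multiplicity up to $d-1$, one copy per chosen normal direction, and the $(d,d+1)$-count is imposed on that multigraph; your row-selection then naturally produces such a weighted edge set, and both the sparsity bound and the Henneberg-type inductive construction live in this multigraph setting. Without that adjustment neither half of your plan goes through for $d\ge 3$.
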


When we are dealing with frameworks in dimension 2, there is an interesting observation to be made.
For the following result, we define for any framework $(G,p)$ in $\mathbb{R}^2$ the congruent framework $(G,p^\perp)$ formed by rotating the framework $90^\circ$ clockwise around the origin.
The following two results are well-known and can also be found in \cite{ww96}.
For the sake of completeness, we include the brief proofs for both statements.

\begin{lemma}\label{l:paralleldrawing}
    Let $(G,p)$ be a framework in $\mathbb{R}^2$.
    Then $\ker R(G,p^\perp) = C(G,p)$.
\end{lemma}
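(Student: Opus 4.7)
The plan is to prove set equality by unpacking both sides as a collection of per-edge linear conditions and then invoking the two-dimensional fact that the orthogonal complement of a line is its $90^\circ$ rotation. Let $q : V \to \mathbb{R}^2$ be an arbitrary realisation; I will show $q \in \ker R(G,p^\perp)$ if and only if $q \in C(G,p)$.

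First I would write out the kernel condition explicitly. By the definition of the rigidity matrix recalled in \Cref{subsec:rigid}, the row indexed by $uv \in E$ of $R(G,p^\perp)$ acts on $q$ (viewed as a vector in $\mathbb{R}^{2|V|}$) to give
\begin{align*}
    (p^\perp(u)-p^\perp(v))\cdot(q(u)-q(v)).
\end{align*}
Hence $q \in \ker R(G,p^\perp)$ if and only if $(p^\perp(u)-p^\perp(v))\cdot(q(u)-q(v)) = 0$ for every edge $uv \in E$. Since $p \mapsto p^\perp$ is linear, $p^\perp(u)-p^\perp(v) = (p(u)-p(v))^\perp$.

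Next I would use the elementary planar identity that for any $x,y \in \mathbb{R}^2$, $x^\perp \cdot y = 0$ if and only if $y$ is a scalar multiple of $x$ (including the case $y=0$). Applying this with $x = p(u)-p(v)$ and $y = q(u)-q(v)$, the kernel condition for edge $uv$ becomes
\begin{align*}
    q(u)-q(v) = \lambda_{uv}\bigl(p(u)-p(v)\bigr)
\end{align*}
for some $\lambda_{uv}\in\mathbb{R}$. This is precisely the direction-equivalence condition from \Cref{subsec:parallelredraw} defining membership in $C(G,p)$. Since this equivalence holds edge-by-edge, the two sets coincide, completing the proof.

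The argument is essentially a change of viewpoint and involves no real obstacle; the only point deserving care is the case $p(u)=p(v)$, in which the kernel equation for the edge $uv$ is trivially satisfied and the parallelism condition is likewise vacuous (any scalar $\lambda_{uv}$ works), so the edge imposes no constraint on either side and the equivalence still holds.
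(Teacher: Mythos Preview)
Your argument is essentially identical to the paper's: both reduce membership in $\ker R(G,p^\perp)$ and in $C(G,p)$ to the same edge-by-edge orthogonality condition $(p(u)-p(v))^\perp \cdot (q(u)-q(v)) = 0$.

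One correction to your closing remark: when $p(u)=p(v)$, the direction-equivalence condition is \emph{not} vacuous. It reads $q(u)-q(v) = \lambda_{uv}\cdot 0$, which forces $q(u)=q(v)$; the fact that ``any $\lambda_{uv}$ works'' once $q(u)=q(v)$ does not make the constraint empty. Meanwhile the kernel row for that edge is genuinely zero and imposes nothing. So in this degenerate case the two sides actually differ, and the lemma (as literally stated, with the paper's definition of $C(G,p)$) fails. The paper's own proof glosses over this point as well; in context one tacitly assumes $p(u)\neq p(v)$ for every edge, which holds for the reciprocal diagrams the lemma is applied to. Your main argument is fine under that standing assumption, but the parenthetical justification you gave for the degenerate case is incorrect and should be removed or replaced by the assumption that no edge has coincident endpoints.
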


\begin{proof}
    Choose any point $q \in (\mathbb{R}^d)^V$.
    Then
    \begin{align*}
        q \in C(G,p) \quad &\Leftrightarrow \quad (q(u) - q(v)) \cdot (p(u)^\perp - p(v)^\perp) = 0 \quad \text{ for all } uv \in E, \\
        &\Leftrightarrow \quad R(G,p^\perp) q = 0,
    \end{align*}
    and the desired result follows.
\end{proof}

\begin{theorem}\label{t:paralleldrawing}
    A framework $(G,p)$ in $\mathbb{R}^2$ is infinitesimally rigid if and only if it is direction rigid.
\end{theorem}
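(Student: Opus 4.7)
The plan is to deduce the theorem almost entirely from Lemma \ref{l:paralleldrawing} by pairing it with the rotational invariance of both rigidity notions. The lemma gives the identity $\ker R(G,p^\perp) = C(G,p)$, and I would first derive the dual identity $\ker R(G,p) = C(G,p^\perp)$. This follows by applying the lemma with $p^\perp$ in place of $p$ and observing that $(p^\perp)^\perp = -p$, together with the elementary fact that $R(G,-p) = -R(G,p)$, so these matrices share a kernel. This identity is the central bridge between the two concepts.

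Next I would reduce the equivalence to a dimension count in the generic situation. Assume $(G,p)$ has at least two vertices and affine dimension $2$ (so it is not a simplex). Infinitesimal rigidity is then equivalent to $\dim \ker R(G,p) = 3$, and direction rigidity is equivalent to $\dim C(G,p) = 3$, since in this regime the trivial infinitesimal flex space and the homothety space $T(G,p)$ are both $3$-dimensional and already contained in the respective total spaces. Applying the dual identity from Step 1 gives
\begin{align*}
    \dim C(G,p) \;=\; \dim \ker R(G,p^\perp),
\end{align*}
so direction rigidity of $(G,p)$ is equivalent to infinitesimal rigidity of $(G,p^\perp)$. The latter equals infinitesimal rigidity of $(G,p)$ because rigid motions preserve the rank of the rigidity matrix. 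A parallel remark shows direction rigidity is also preserved under the rotation $q \mapsto q^\perp$, which tidies up the symmetry.

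The remaining work is a routine case check of degenerate realisations, which I expect to be the least elegant step rather than a genuine obstacle. When $(G,p)$ is a simplex ($|V|\le 3$, $G$ complete, affine dimension $|V|-1$), one verifies directly that both properties hold: a single vertex is trivial, an edge has $C(G,p)$ spanned by two translations and one scaling (dimension $3$), and $K_3$ realised non-collinearly is both infinitesimally rigid and direction rigid because three pairwise non-parallel edge directions force any parallel redrawing to be homothetic. Conversely, when $|V|\ge 3$ and $(G,p)$ has affine dimension at most $1$ without being a simplex, both properties fail: the rows of $R(G,p)$ are confined to a $1$-dimensional direction, forcing $\rank R(G,p) \le |V|-1 < 2|V|-3$, and $C(G,p)$ picks up at least one extra direction of motion (vertices sliding independently along the common line, or all vertices moving freely when the affine dimension is $0$), so its dimension exceeds $3$. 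The main conceptual content is therefore entirely captured by Lemma \ref{l:paralleldrawing}; the proof is effectively a dimension-counting translation between the two settings.
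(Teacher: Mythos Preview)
Your proposal is correct and follows essentially the same route as the paper: both arguments reduce to Lemma~\ref{l:paralleldrawing} together with the observation that $\rank R(G,p) = \rank R(G,p^\perp)$ (the paper phrases this as a column permutation with sign changes, you phrase it as invariance under a rigid motion), and both handle the affine-dimension-$0$ case separately. Two minor remarks: the identity you actually invoke in Step~2, namely $\dim C(G,p) = \dim \ker R(G,p^\perp)$, is the original lemma rather than your derived dual identity $\ker R(G,p) = C(G,p^\perp)$, so that derivation is a detour you never use; and your separate treatment of the collinear (affine dimension~$1$) case is unnecessary, since for any realisation of positive affine dimension in $\mathbb{R}^2$ the trivial flex space and $T(G,p)$ are both $3$-dimensional, so the dimension count already covers it.
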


\begin{proof}
    If $(G,p)$ has affine dimension 0 then either $|V|=1$ and $(G,p)$ is infinitesimally rigid and direction rigid, or $|V| >1$ and it is both infinitesimally flexible and direction flexible.
    Note that $\rank R(G,p^\perp) = \rank R(G,p)$,
    since one matrix can be obtained from the other by multiplying some columns by $-1$ and then rearranging the columns.
    The result now follows from \Cref{l:paralleldrawing} and the observation that infinitesimal rigidity requires $\dim \ker R(G,p) = 3 = \dim T(G,p)$.
\end{proof}

\section{Tropical hypersurfaces, extremality and direction rigidity}\label{sec:hyper}

As the necessary background has been introduced in the previous sections we are now ready to present our approach to bridging the gap between tropical geometry and rigidity theory. We will first discuss how to construct a dual graph and subdivided Newton polytope for tropical hypersurfaces. This is followed by studying the rigidity properties of the dual objects. Explicitly, in \Cref{t:main} we link the notion of extremal tropical hypersurfaces to direction rigidity.

\subsection{Tropical hypersurfaces and reciprocal diagrams}

We recall that a tropical hypersurface $C \subset \mathbb{R}^d$ is a tropical variety of dimension $d-1$.
A useful property of tropical hypersurfaces lies in their partitioning of the space they exist in.
In particular,
the set $\mathbb{R}^d \setminus C$ is the disjoint union of finitely many open sets, and the closure of any of these connected components (which we shall denote by the set $V$) is a convex polytope (see \cite{Nisse+Sottile:16} for a discussion on higher dimensional convexity).
If the intersection of two elements $v, w \in V$ has dimension $d-1$,
then $v \cap w = \sigma \in \widetilde{E}$. % Not sure we need a reference here. 
Hence, for a tropical hypersurface, we can always form a finite simple graph $G$ with vertex set $V$ and edge set $E$ such that $vw \in E$ if and only if $v \cap w \in \widetilde{E}$.
Note that there exists a natural bijection between the set $\widetilde{E}$ of $(d-1)$-dimensional faces of $C$ and the edge set $E$ of the graph $G$.
We refer to the graph $G$ as the \emph{dual graph} of $C$.
Since the regions of $C$ cover $\mathbb{R}^d$,
it follows that the dual graph $G$ is connected.

Given a tropical hypersurface $C \subset \mathbb{R}^d$ with dual graph $G=(V,E)$,
choose a $(d-1)$-dimensional polytope $\sigma \in \widetilde{E}$ which is the intersection of two regions $v,w \in V$.
We now define $x_\sigma(v,w) \in \mathbb{Z}^d$ to be the smallest integer-valued vector perpendicular to $L_\sigma$,
oriented in the direction travelled when crossing $\sigma$ from face $w$ to face $v$.
We now describe a particular embedding of the dual graph that we will use throughout the paper.

\begin{definition}\label{def:dualframework}
    Given a tropical hypersurface $C \subset \mathbb{R}^d$ with dual graph $G=(V,E)$,
    a framework $(G,p)$ is a \emph{reciprocal diagram of $C$} if $p(v) -p(w)$ is a positive rational scaling of $x_\sigma(v,w)$ for every edge $vw \in E$.
\end{definition}

\subsection{Subdivided Newton polytopes}

Given the definition of a reciprocal diagram,
it is unclear whether such a framework should exist for any tropical hypersurface.
Because of this,
we now construct a type of reciprocal diagram every tropical hypersurface has.
With this in mind,
we now present the definition of the Newton polytope of a tropical polynomial and outline the construction of a specific subdivision called the dual subdivision of the Newton polytope. For further discussion of the following ideas, see \cite[Section 1.4]{Mikhalkin+Rau:19}. 
%It will be seen that the dual subdivision of the Newton polytope encodes the combinatorial structure of the corresponding hypersurface. 

Given a tropical polynomial $f \in \mathbb{T}[z_1\, ,\dots , z_d]$, we define the following notions:
\begin{enumerate}
    \item the \emph{support} of $f$ is the set $\mathrm{supp}(f):= \{k = (k_1 \, \dots \, ,k_d) \in A \subset \mathbb{Z}^d \, : \, a_k \neq - \infty \}$,
    \item the \emph{Newton polytope} of $f$ is the convex set $\mathrm{NP}(f):= \mathrm{Conv}(\mathrm{supp}(f))$, where $\mathrm{Conv}$ denotes the convex hull of a set of points,
    \item the \emph{lifted support} of $f$ is the set of points $\mathcal{L}\mathrm{supp}(f):=\{(k,-a_k) \, : \, k \in \mathrm{supp}(f)\} \subset \mathbb{Z}^d \times \mathbb{R}$,
    \item the \emph{lifted Newton polytope} of $f$ is the convex set $\mathcal{L}\mathrm{NP}(f):=\mathrm{Conv}(\mathrm{supp}(f))$.
\end{enumerate}
The lifted Newton polytope $\mathcal{L}\mathrm{NP}(f)$ then induces a subdivision structure on the Newton polytope $\mathrm{NP}(f)$ obtained under a projection of the lower faces, $\mathbb{R}^n \times \mathbb{R} \rightarrow \mathbb{R}$. This subdivision is called the \emph{dual subdivision} of the Newton polytope of $f$, and denoted $\mathrm{SD}(f)$.

\begin{example}\label{Ex:dual-sub-div}
    See \Cref{Fig:dual-sub-div-exs} for examples of dual subdivisions related to tropical hypersurfaces that have been previously discussed. 
\end{example}
\begin{figure}
        \centering
        \begin{subfigure}[b]{0.3\textwidth}
            \centering
            \includegraphics{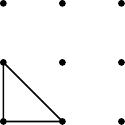}
        \caption{$\mathrm{SD}(x \oplus y \oplus 0)$, dual to the tropical hypersurfaces  in \Cref{Fig:exs-trop-var-line}.}
        \label{Fig:dual-sub-div-exs-(a)}
        \end{subfigure}
        \hfill
        \begin{subfigure}[b]{0.3\textwidth}
            \centering
            \includegraphics{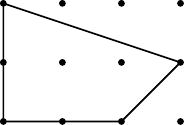}
        \caption{$\mathrm{SD}(0 \oplus x^{\otimes2} \oplus y^{\otimes2} \oplus x^{\otimes3} \otimes y)$, dual to the tropical hypersurfaces in \Cref{fig:Red-Trop-Var-Bad-W}.}
        \label{Fig:dual-sub-div-exs-(b)}
        \end{subfigure}
        \hfill
        \begin{subfigure}[b]{0.3\textwidth}
            \centering
            \includegraphics{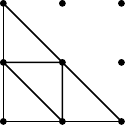}
        \caption{$\mathrm{SD}((-1) \oplus x \oplus y \oplus (x \otimes y) \oplus ((-1) \otimes y^{\otimes2}) \oplus ((-1) \otimes x^{\otimes 2}))$, dual to the tropical hypersurfaces  in \Cref{Fig:extremal-trop-quadratic}.}
        \label{Fig:dual-sub-div-exs-(c)}
        \end{subfigure}
        \caption{The three dual subdivisions from \Cref{Ex:dual-sub-div}.}
        \label{Fig:dual-sub-div-exs}
    \end{figure}

\begin{proposition}[{\cite[Proposition 3.1.6]{Mac+Stur:15}}]
    Let $f$ be a tropical polynomial.
    Then the 1-skeleton of the subdivided Newton polytope $\mathrm{SD}(f)$ is a reciprocal diagram of the tropical hypersurface $\mathbb{V}(f)$.
\end{proposition}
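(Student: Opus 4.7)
The plan is to combine the standard duality between the dual subdivision and the cell structure of a tropical hypersurface with a short calculation identifying the direction of each subdivision edge with the normal to the corresponding maximal face of $\mathbb{V}(f)$.

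First I would set up the cellular bijection via the construction of $\mathrm{SD}(f)$ as the projection of the lower faces of $\mathcal{L}\mathrm{NP}(f)$. On each connected component $v$ of $\mathbb{R}^d \setminus \mathbb{V}(f)$, the tropical polynomial is governed by a single monomial, i.e., there is a unique $k_v \in \mathrm{supp}(f)$ with $f(z) = k_v \cdot z + a_{k_v}$ on $v$. The map $v \mapsto k_v$ realises the set of $d$-dimensional regions of $\mathbb{R}^d \setminus \mathbb{V}(f)$ as the vertex set of $\mathrm{SD}(f)$, and extends to a dimension-reversing bijection between cells of $\mathbb{V}(f)$ (together with these regions) and cells of $\mathrm{SD}(f)$. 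In particular, edges of $\mathrm{SD}(f)$ correspond exactly to maximal faces of $\mathbb{V}(f)$. This part is precisely the content of \cite[Proposition 3.1.6]{Mac+Stur:15}, so I would cite it rather than reprove it; in particular this takes care of checking that every edge of the 1-skeleton of $\mathrm{SD}(f)$ arises from some $\sigma \in \widetilde{E}$.

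Next I would fix a maximal face $\sigma \in \widetilde{E}$ written as $\sigma = v \cap w$ and verify the direction and orientation conditions of \Cref{def:dualframework}. On $\sigma$ the linear forms $k_v \cdot z + a_{k_v}$ and $k_w \cdot z + a_{k_w}$ both coincide with $f(z)$, so throughout $\sigma$ we have the identity
\begin{equation*}
    (k_v - k_w) \cdot z \;=\; a_{k_w} - a_{k_v}.
\end{equation*}
Hence $k_v - k_w$ is orthogonal to the linear space $L_\sigma$. Since $k_v, k_w \in \mathbb{Z}^d$, their difference is a nonzero integer vector in the one-dimensional lattice orthogonal to $L_\sigma$, and so is an integer multiple of the primitive integer vector generating that line.

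For orientation, on the interior of the region $v$ we have the strict inequality $k_v \cdot z + a_{k_v} > k_w \cdot z + a_{k_w}$, i.e., $(k_v - k_w) \cdot z > a_{k_w} - a_{k_v}$. Thus crossing $\sigma$ from $w$ into $v$ strictly increases the functional $z \mapsto (k_v - k_w) \cdot z$, which means $k_v - k_w$ points from $w$ toward $v$. This matches the sign convention defining $x_\sigma(v,w)$, so $k_v - k_w = n \cdot x_\sigma(v,w)$ for some positive integer $n$, which is in particular a positive rational scaling. Applied at every edge of the dual graph, this shows the embedding $v \mapsto k_v$ satisfies \Cref{def:dualframework}.

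I do not expect a real obstacle here: the only substantive step is the single-line computation above together with its strict-inequality counterpart, while the bijective bookkeeping (edges of $\mathrm{SD}(f)$ exactly match maximal faces of $\mathbb{V}(f)$) is packaged into the Maclagan--Sturmfels reference. If anything, the mild subtlety worth flagging is that one must invoke the duality in the right direction to exclude spurious edges of $\mathrm{SD}(f)$, but this is already part of \cite[Proposition 3.1.6]{Mac+Stur:15}.
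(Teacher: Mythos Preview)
Your proposal is correct, but note that the paper does not actually give its own proof of this proposition: it is stated as a citation of \cite[Proposition 3.1.6]{Mac+Stur:15}, with a further pointer to \cite[Theorem 2.3.7]{Mikhalkin+Rau:19} for the explicit duality, and no argument is supplied in the paper itself. So there is nothing to compare your approach against beyond those references.

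That said, your write-up does slightly more than the paper: the cited Maclagan--Sturmfels result packages the inclusion-reversing bijection between cells of $\mathbb{V}(f)$ and cells of $\mathrm{SD}(f)$, while the specific reciprocal-diagram condition of \Cref{def:dualframework} (that $p(v)-p(w)$ is a \emph{positive} rational multiple of $x_\sigma(v,w)$) is the extra ingredient you verify directly via the identity $(k_v-k_w)\cdot z = a_{k_w}-a_{k_v}$ on $\sigma$ and the strict inequality on the interior of $v$. That computation is exactly right and is the honest content needed to pass from the Maclagan--Sturmfels duality to the statement as phrased here. One cosmetic point: since the proposition you are proving is itself labelled as \cite[Proposition 3.1.6]{Mac+Stur:15}, it reads a little circular to invoke that same reference inside the proof; it would be cleaner to cite the duality as, e.g., the dual-subdivision construction in \cite[\S3.1]{Mac+Stur:15} or \cite[Theorem 2.3.7]{Mikhalkin+Rau:19}.
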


An explicit description of this duality is outlined in \cite[Theorem 2.3.7]{Mikhalkin+Rau:19}. 
%The hypersurface defined as the tropical variety of a tropical polynomial is balanced with an integer weighting. 
The next result demonstrates how $\mathrm{SD}(f)$ encodes a specific balanced weighting for the tropical hypersurface $\mathbb{V}(f)$. 

\begin{proposition}[{\cite[Proposition 3.3.2]{Mac+Stur:15}}]\label{p:naturalweighting}
    Let $f$ be a tropical polynomial. The variety $\mathbb{V}(f)$ has a balanced weighting $\omega$ with weights equal to lattice lengths of the edges of $\mathrm{SD}(f)$. Explicitly, given a maximal face $\sigma$ of $\mathbb{V}(f)$, the weight $\omega(\sigma)$ is equal to the number of lattice points contained within the corresponding edge in $\mathrm{SD}(f)$ minus one. 
\end{proposition}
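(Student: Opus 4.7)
The plan is to exploit the tropical--polyhedral duality between $\mathbb{V}(f)$ and $\mathrm{SD}(f)$. Under this duality each maximal face $\sigma$ of $\mathbb{V}(f)$ is dual to an edge $e_\sigma$ of $\mathrm{SD}(f)$, and each ridge $\tau$ of $\mathbb{V}(f)$ is dual to a two-dimensional polygonal cell $P_\tau$ of $\mathrm{SD}(f)$ whose boundary edges are precisely $\{e_\sigma : \sigma \supset \tau\}$. The balancing condition at $\tau$ will then reduce to the elementary fact that the cyclically oriented boundary edge vectors of the convex polygon $P_\tau$ sum to zero.

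Concretely, if $e_\sigma$ joins the monomial exponents $k,\ell \in \mathbb{Z}^d$, then the equality $k \cdot z + a_k = \ell \cdot z + a_\ell$ must hold along $\sigma$, so $\sigma$ lies in the affine hyperplane $\{z : (k-\ell)\cdot z = a_\ell - a_k\}$ and $k - \ell \in \mathbb{Z}^d$ is normal to $L_\sigma$. Writing $k - \ell = m_\sigma\, p_\sigma$ with $p_\sigma \in \mathbb{Z}^d$ primitive and $m_\sigma$ the lattice length of $e_\sigma$ (i.e., the number of lattice points on $e_\sigma$ minus one), the candidate weight is $\omega(\sigma) := m_\sigma$. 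Since the polygon $P_\tau$ lies in the two-dimensional plane $L_\tau^\perp \subset \mathbb{R}^d$, traversing $\partial P_\tau$ once with a consistent cyclic orientation yields
\begin{align*}
    \sum_{\sigma \supset \tau} m_\sigma\, p_\sigma \;=\; 0 \qquad \text{in } L_\tau^\perp.
\end{align*}

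To convert this closing-up relation into the balancing identity $\sum_{\sigma \supset \tau} \omega(\sigma)\, z_\tau(\sigma) \in L_\tau$, I would apply a $90^\circ$ rotation $J$ within $L_\tau^\perp$ and use the key geometric observation that $J p_\sigma$ is a positive real multiple of the orthogonal projection of $z_\tau(\sigma)$ onto $L_\tau^\perp$, with a proportionality constant depending only on the covolume of $L_\tau \cap \mathbb{Z}^d$ inside $L_\tau$, and not on the individual face $\sigma$. Once this uniform scaling is absorbed, applying $J$ to the polygon-closing identity above produces exactly $\sum_{\sigma \supset \tau} m_\sigma\, z_\tau(\sigma) \in L_\tau$, which is the balancing condition.

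The main obstacle is verifying that the proportionality constant between $J p_\sigma$ and the orthogonal projection of $z_\tau(\sigma)$ onto $L_\tau^\perp$ is genuinely independent of $\sigma$. This is a short lattice computation: both $|p_\sigma|$ and the projected norm of $z_\tau(\sigma)$ encode the same covolume data of the rational subspace $L_\tau \subset \mathbb{R}^d$ --- they differ from each other only by the index of the orthogonal decomposition sublattice $(L_\tau \cap \mathbb{Z}^d) \oplus (L_\tau^\perp \cap \mathbb{Z}^d)$ inside $\mathbb{Z}^d$ --- so all $\sigma$-dependence cancels. After that, the result follows immediately from the elementary fact that the boundary of the convex polygon $P_\tau$ closes up, and the integrality of $\omega(\sigma)$ is built into the construction.
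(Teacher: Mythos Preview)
The paper does not prove this proposition; it is quoted directly from Maclagan--Sturmfels \cite[Proposition 3.3.2]{Mac+Stur:15} without argument, so there is no in-paper proof to compare against.

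Your plan is essentially the standard argument and is sound. The duality you invoke (maximal faces $\leftrightarrow$ edges of $\mathrm{SD}(f)$, ridges $\leftrightarrow$ $2$-cells) is exactly what the paper cites from \cite[Theorem 2.3.7]{Mikhalkin+Rau:19}, and reducing balancing at $\tau$ to the closing-up of the polygon $P_\tau$ is the right move. Two points deserve tightening. First, your stated reason for the $\sigma$-independence of the proportionality constant is not quite correct: the ratio $|p_\sigma|/|\pi(z_\tau(\sigma))|$ is \emph{not} the index $[\mathbb{Z}^d : (L_\tau\cap\mathbb{Z}^d)\oplus(L_\tau^\perp\cap\mathbb{Z}^d)]$. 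The clean computation is that, since $p_\sigma$ is the primitive integer normal to the hyperplane $L_\sigma$, one has $|p_\sigma|=\operatorname{covol}(L_\sigma\cap\mathbb{Z}^d)$; and since $z_\tau(\sigma)$ generates $(L_\sigma\cap\mathbb{Z}^d)/(L_\tau\cap\mathbb{Z}^d)$, one has $\operatorname{covol}(L_\sigma\cap\mathbb{Z}^d)=\operatorname{covol}(L_\tau\cap\mathbb{Z}^d)\cdot|\pi(z_\tau(\sigma))|$. Hence $|p_\sigma|/|\pi(z_\tau(\sigma))|=\operatorname{covol}(L_\tau\cap\mathbb{Z}^d)$, which indeed depends only on $\tau$ --- so your conclusion stands even though the specific formula you gave does not. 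Second, the sign bookkeeping (matching the cyclic orientation of $\partial P_\tau$ with the choice of rotation $J$ so that each $Jp_\sigma$ is a \emph{positive} multiple of $\pi(z_\tau(\sigma))$) is glossed over; this is routine but should be made explicit, since the balancing condition is sensitive to the outward-pointing convention for $z_\tau(\sigma)$. With those two fixes the argument goes through.
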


\subsection{Linking extremality and direction rigidity}

The following lemma describes the correspondence between reciprocal diagrams and rational balanced weightings. 

\begin{lemma}\label{lem:dualstress}
    Let $C$ be a tropical hypersurface in $\mathbb{R}^d$ with rational (possibly not balanced) weighting $\omega$. 
    For a given ridge $\tau \in \widetilde{V}$,
    let $(\sigma_1,\ldots,\sigma_n)$ be the unique cyclic ordering (up to orientation) of the elements of $\widetilde{E}$ containing $\tau$ such that $\sigma_i \cap \sigma_{i+1} = v_i$ for each $i \in \{1,\ldots,n-1\}$ and $\sigma_n \cap \sigma_1 = v_n$.
    Then, the equality
    \begin{align*}
        \sum_{i=1}^n \omega(\sigma_i) x_{\sigma_i}(v_{i+1},v_i) = 0
    \end{align*}
    holds for each $\tau \in \widetilde{V}$ (with $v_{n+1} = v_1$) if and only if $\omega$ is a rational balanced weighting of $C$.
\end{lemma}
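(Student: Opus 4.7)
The plan is to fix a ridge $\tau$, orthogonally project everything into the $2$-dimensional space $L_\tau^\perp$, and produce a single invertible linear map of $L_\tau^\perp$ that converts the balancing condition at $\tau$ into the claimed identity. Let $\pi\colon\mathbb{R}^d\to L_\tau^\perp$ be orthogonal projection. Because $L_\tau\subset L_{\sigma_i}$ for every $\sigma_i\supset\tau$, the vector $x_{\sigma_i}(v_{i+1},v_i)\in L_{\sigma_i}^\perp$ already lies in $L_\tau^\perp$, and the projection $\bar z_i:=\pi(z_\tau(\sigma_i))$ spans $L_{\sigma_i}\cap L_\tau^\perp$, the $1$-dimensional subspace of $L_\tau^\perp$ perpendicular to $x_{\sigma_i}(v_{i+1},v_i)$. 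Moreover, the balancing condition $\sum_i\omega(\sigma_i)z_\tau(\sigma_i)\in L_\tau$ at $\tau$ is equivalent to $\sum_i\omega(\sigma_i)\bar z_i=0$ in $L_\tau^\perp$.

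The key step is a lattice computation. Consider the rank-$2$ lattices $\Lambda:=\pi(\mathbb{Z}^d)$ and $\Lambda^\perp:=\mathbb{Z}^d\cap L_\tau^\perp$ inside $L_\tau^\perp$; a brief check using the rationality of $L_\tau$ shows that each $\bar z_i$ is primitive in $\Lambda$, each $x_{\sigma_i}(v_{i+1},v_i)$ is primitive in $\Lambda^\perp$, and that the standard inner product restricts to a perfect $\mathbb{Z}$-bilinear pairing $\Lambda^\perp\times\Lambda\to\mathbb{Z}$. Writing the perfection of this pairing as $N^\top G M = I$ for basis matrices $M, N$ of $\Lambda, \Lambda^\perp$ and $G$ the Gram matrix of the ambient inner product yields $G_\Lambda = G_{\Lambda^\perp}^{-1}$ for the respective Gram matrices, and a direct $2\times 2$ calculation then gives
\begin{align*}
    \frac{\lVert a\rVert^2}{\lVert b\rVert^2} \;=\; \frac{1}{\det G_{\Lambda^\perp}}
\end{align*}
for every primitive $a\in\Lambda$ and primitive $b\in\Lambda^\perp$ with $a\perp b$. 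Thus there is a single constant $c_\tau>0$, depending only on $\tau$, such that $\lVert a\rVert=c_\tau\lVert b\rVert$ in every perpendicular pair of directions.

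Let $R\colon L_\tau^\perp\to L_\tau^\perp$ denote the $90^\circ$ rotation whose orientation is chosen to agree with the cyclic ordering $(\sigma_1,\ldots,\sigma_n)$ around $\tau$. Since $R\bar z_i$ lies in $L_{\sigma_i}^\perp$ (being perpendicular to $\bar z_i$ within $L_\tau^\perp$), has magnitude $c_\tau\lVert x_{\sigma_i}(v_{i+1},v_i)\rVert$ by the previous step, and points in the same direction as $x_{\sigma_i}(v_{i+1},v_i)$ by the cyclic orientation, we obtain the uniform identity $R\bar z_i=c_\tau\,x_{\sigma_i}(v_{i+1},v_i)$ for every $i$. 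Applying the invertible map $c_\tau^{-1}R$ to the projected balancing relation then gives the chain of equivalences
\begin{align*}
    \sum_i\omega(\sigma_i)\bar z_i=0 \iff \sum_i\omega(\sigma_i)R\bar z_i=0 \iff \sum_i\omega(\sigma_i)\,x_{\sigma_i}(v_{i+1},v_i)=0,
\end{align*}
and ranging $\tau$ over $\widetilde V$ produces the lemma.

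The main obstacle is the direction-independence of $c_\tau$; without it, rotating each $\bar z_i$ into a multiple of $x_{\sigma_i}(v_{i+1},v_i)$ would use face-dependent scalars, and the two linear relations on $\omega$ would cease to be equivalent for arbitrary rational weightings. The remaining task of orienting $R$ to match the cyclic ordering around $\tau$, so that the sign in the identity $R\bar z_i=c_\tau\,x_{\sigma_i}(v_{i+1},v_i)$ is $+$ for every $i$ simultaneously, is routine bookkeeping.
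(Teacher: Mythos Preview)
Your argument is correct and takes a genuinely different route from the paper's.  The paper scales $\omega$ to an integer weighting and then invokes the structure theorem \cite[Proposition~3.3.10]{Mac+Stur:15} that every balanced weighted tropical hypersurface is $\mathbb{V}(f)$ for some tropical polynomial $f$; the equivalence is then read off from the dual $2$-face of $\mathrm{SD}(f)$ corresponding to $\tau$, whose edge vectors are precisely the $\omega(\sigma_i)x_{\sigma_i}(v_{i+1},v_i)$ and must close up.  You instead give a direct linear-algebra argument inside the $2$-plane $L_\tau^\perp$: the projected primitive vectors $\bar z_i\in\Lambda=\pi(\mathbb{Z}^d)$ and the normals $x_{\sigma_i}\in\Lambda^\perp=\mathbb{Z}^d\cap L_\tau^\perp$ sit in mutually dual rank-$2$ lattices, and the resulting Gram-matrix identity $G_\Lambda=G_{\Lambda^\perp}^{-1}$ forces the ratio $\lVert\bar z_i\rVert/\lVert x_{\sigma_i}\rVert$ to be the single constant $(\det G_\Lambda)^{1/2}$, so a fixed $90^\circ$ rotation carries one sum to a nonzero multiple of the other.

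What each approach buys: the paper's proof is shorter and plugs directly into the subdivided-Newton-polytope framework that the rest of Section~3 relies on.  Your proof is self-contained---it never needs to produce a tropical polynomial realising $(C,\omega)$---and makes the mechanism of the equivalence completely explicit.  In particular, it handles both directions of the ``if and only if'' uniformly for an arbitrary rational weighting, whereas the paper's appeal to the structure theorem strictly speaking only furnishes an $f$ once $\omega$ is already known to be balanced; your argument sidesteps that subtlety entirely.
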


\begin{proof}
 The rational weighting $\omega$ can be scaled by $\lambda \in \mathbb{Z}$, explicitly the least common multiple of the denominators of the weights in $\omega$ to obtain an integer weighting $\omega' = \lambda \omega$, without affecting the balancing. Then, as every weighted tropical hypersurface arises as the variety of a tropical polynomial, see \cite[Proposition 3.3.10]{Mac+Stur:15}, we have $C = \mathbb{V}(f)$ for $f \in \mathbb{T}[z_1, \, \dots \, ,z_d]$. 
    Furthermore,
    the weighting $\omega$ corresponds to the lattice length of the subdivided Newton polytope $SD(f)$, and the balancing condition at $\tau$ is equivalent to the edge vectors
    \begin{equation*}
        \omega'(\sigma_1) x_{\sigma_1}(v_{2},v_1) ~, \quad  \ldots \quad  , \quad   \omega'(\sigma_{n-1}) x_{\sigma_{n-1}}(v_{n},v_{n-1}) ~, \quad \omega'(\sigma_n) x_{\sigma_n}(v_{1},v_n)
    \end{equation*}
    of the two dimensional polygon face in $SD(f)$ associated to $\tau$ summing to zero. Therefore, for any chosen ridge $\tau$ we see that 
    \begin{align*}
        \sum_{i=1}^n \omega'(\sigma_i) x_{\sigma_i}(v_{i+1},v_i) = 0 &\iff  \sum_{i=1}^n \lambda\omega(\sigma_i) x_{\sigma_i}(v_{i+1},v_i) = 0 \\ 
        &\iff  \lambda \left( \sum_{i=1}^n \omega(\sigma_i) x_{\sigma_i}(v_{i+1},v_i) = 0\right),
    \end{align*}
    which demonstrates the desired result. 
\end{proof}

\begin{lemma}\label{lem:dualsums}
    Let $C$ be a tropical hypersurface in $\mathbb{R}^d$ with dual graph $G=(V,E)$ and rational balanced weighting $\omega$.
    Let $v_1,\ldots, v_n \in V$ and $\sigma_1,\ldots,\sigma_n$ be such that $v_i \cap v_{i+1} = \sigma_i$ for each $i \in \{1,\ldots,n-1\}$ and $v_n \cap v_1 = \sigma_n$.
    Then the sequence $(v_1,\ldots,v_n,v_1)$ is a cycle in $G$, and, given $v_{n+1} = v_1$, we have
    \begin{align*}
        \sum_{i=1}^n \omega(\sigma_i) x_{\sigma_i}(v_{i+1},v_i) = 0.
    \end{align*}
\end{lemma}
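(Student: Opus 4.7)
The first claim, that $(v_1,\ldots,v_n,v_1)$ is a cycle in $G$, is immediate from the construction of the dual graph: since $v_i \cap v_{i+1} = \sigma_i$ is a $(d-1)$-dimensional face of $C$ (i.e., $\sigma_i \in \widetilde{E}$), each pair $v_i v_{i+1}$ is by definition an edge of $G$, and so the sequence is a closed walk.

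The plan for the second claim is to reduce the balancing sum to a telescoping sum by exhibiting an explicit reciprocal diagram of $C$ whose edge vectors exactly realize the weighting $\omega$. I would first scale $\omega$ by a positive integer $\lambda$ to obtain an integer balanced weighting $\lambda\omega$. Then, exactly as in the proof of Lemma \ref{lem:dualstress}, I invoke \cite[Proposition 3.3.10]{Mac+Stur:15} to obtain a tropical polynomial $f$ with $(C,\lambda\omega) = (\mathbb{V}(f),\omega_f)$, where $\omega_f$ is the natural weighting arising from the lattice lengths of edges in $\mathrm{SD}(f)$. By Proposition \ref{p:naturalweighting}, the $1$-skeleton of $\mathrm{SD}(f)$ then provides a reciprocal diagram $(G,p)$ of $C$ in which each edge vector satisfies
\[
p(v) - p(w) = \lambda\omega(\sigma)\,x_\sigma(v,w)
\]
for every edge $vw \in E$ with shared face $\sigma$.

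With this realization in hand, the sum in the lemma immediately telescopes:
\[
\sum_{i=1}^n \lambda\omega(\sigma_i)\,x_{\sigma_i}(v_{i+1},v_i) \;=\; \sum_{i=1}^n \bigl(p(v_{i+1}) - p(v_i)\bigr) \;=\; 0,
\]
since $v_{n+1}=v_1$. Dividing by $\lambda$ yields the desired identity. The only point requiring care -- and the main (though mild) obstacle -- is verifying that the orientation conventions align correctly: namely, that the reciprocal-diagram edge vector of $\mathrm{SD}(f)$ joining the vertex dual to $w$ to the vertex dual to $v$ is literally $\lambda\omega(\sigma)\,x_\sigma(v,w)$ and not its negative. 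This follows by matching Definition \ref{def:dualframework} against the standard duality between $\mathrm{SD}(f)$ and $\mathbb{V}(f)$ (cf.\ \cite[Theorem 2.3.7]{Mikhalkin+Rau:19}), which ensures that the primitive integer normal direction $x_\sigma(v,w)$ agrees, up to the positive scaling by the lattice length $\lambda\omega(\sigma)$, with the corresponding edge of $\mathrm{SD}(f)$.
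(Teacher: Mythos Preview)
Your proposal is correct and follows essentially the same approach as the paper: scale to an integer weighting, realize the weighted hypersurface as $\mathbb{V}(f)$ via \cite[Proposition 3.3.10]{Mac+Stur:15}, and observe that the sum in question is a closed path in the subdivided Newton polytope $\mathrm{SD}(f)$, hence telescopes to zero. You have simply made the telescoping explicit where the paper leaves it to the reader.
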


\begin{proof}
    The proof follows an analogous pattern to the previous lemma, by scaling the weighting and then utilising the subdivided Newton polytope of the corresponding polynomial. The equation now just describes an unscaled version of a closed path through $SD(f)$ in $\mathbb{R}^d$, which sums to zero.
\end{proof}

\begin{lemma}\label{lem:dualframework}
    Let $C$ be a tropical hypersurface in $\mathbb{R}^d$ with dual graph $G=(V,E)$.
    \begin{enumerate}
        \item \label{lem:dualframework1} Let $\omega$ be a rational balanced weighting of $C$.
        Then there exists a reciprocal diagram $(G,p)$ of $C$ such that for every $\sigma \in \widetilde{E}$ separating regions $v,w \in V$,
        we have
        \begin{align}\label{eq:dualframework}
            p(v) - p(w) = \omega(\sigma) x_\sigma(v,w).
        \end{align}
        Furthermore,
        $(G,p)$ is unique up to translation.
        \item \label{lem:dualframework2} Let $(G,p)$ be a reciprocal diagram of $C$.
        Then the map
        \begin{align*}
            \omega : \widetilde{E} \rightarrow \mathbb{Z}_{>0},~ \sigma \mapsto \frac{\|p(v) - p(w)\|}{\|x_\sigma(v,w)\|}
        \end{align*}
        is a rational balanced weighting.
    \end{enumerate}
\end{lemma}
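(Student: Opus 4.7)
My approach is to define $p$ by path integration in the dual graph $G$. Fix a base vertex $v_0 \in V$ and set $p(v_0) := 0$. For each $v \in V$, pick any path $v_0,v_1,\ldots,v_n = v$ in $G$, with $\sigma_i = v_{i-1} \cap v_i \in \widetilde{E}$, and define
\begin{align*}
    p(v) := \sum_{i=1}^n \omega(\sigma_i)\, x_{\sigma_i}(v_i,v_{i-1}).
\end{align*}
To show this is well-defined, I would take any two paths from $v_0$ to $v$; concatenating one with the reverse of the other produces a closed walk in $G$ whose associated sum, by Lemma \ref{lem:dualsums} (applied, if needed, to each simple cycle in a decomposition of the walk within the cycle space of $G$), vanishes. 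Hence the two path-integrals agree. The edge relation $p(v)-p(w) = \omega(\sigma)\, x_\sigma(v,w)$ then follows by extending any chosen path to $w$ by the edge $wv$, and the positivity of $\omega(\sigma)$ confirms that $(G,p)$ is a reciprocal diagram. Uniqueness up to translation is immediate: any other $p'$ satisfying the same edge relations forces $p-p'$ to be locally, and hence globally, constant on the connected graph $G$.

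\textbf{Plan for (ii).} For the second part, suppose $(G,p)$ is a reciprocal diagram of $C$. By the definition of a reciprocal diagram, for each $\sigma = v \cap w \in \widetilde{E}$, the vector $p(v)-p(w)$ is a positive rational scalar multiple of $x_\sigma(v,w)$, so the value $\omega(\sigma) := \|p(v)-p(w)\|/\|x_\sigma(v,w)\|$ lies in $\mathbb{Q}_{>0}$ and satisfies $p(v)-p(w) = \omega(\sigma)\, x_\sigma(v,w)$. For the balancing condition, fix any ridge $\tau \in \widetilde{V}$ and use the cyclic ordering $(\sigma_1,\ldots,\sigma_n)$ of the faces containing $\tau$ with $v_i = \sigma_i \cap \sigma_{i+1}$ as supplied by Lemma \ref{lem:dualstress}. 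Since $(v_1,\ldots,v_n,v_1)$ is a closed walk in $G$, telescoping gives
\begin{align*}
    \sum_{i=1}^n \omega(\sigma_i)\, x_{\sigma_i}(v_{i+1},v_i) = \sum_{i=1}^n \bigl(p(v_{i+1})-p(v_i)\bigr) = 0,
\end{align*}
so Lemma \ref{lem:dualstress} identifies $\omega$ as a rational balanced weighting, as desired.

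\textbf{Main obstacle.} The main subtlety is the well-definedness step in (i): one must confirm that the path-integral depends only on the endpoints, i.e., that every closed walk in $G$ has vanishing integral. Lemma \ref{lem:dualsums} provides this for sequences of regions sharing codimension-one faces, and every cycle in the dual graph has precisely this structure by construction; extension from simple cycles to arbitrary closed walks is handled by a routine cycle-space decomposition together with the linearity of the defining sum. A minor technical point worth flagging is the sign convention on $x_\sigma(v,w)$, which must reverse when $v$ and $w$ are interchanged; this is used implicitly both when traversing the reverse of a path in (i) and when telescoping around $\tau$ in (ii).
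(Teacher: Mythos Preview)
Your proposal is correct and follows essentially the same approach as the paper: the paper defines $p$ by summing $\omega(\sigma)x_\sigma$ along paths in $G$ (organised via a spanning tree rather than arbitrary paths), checks consistency on cycles via Lemma~\ref{lem:dualsums}, and proves part~(ii) by the identical telescoping argument through Lemma~\ref{lem:dualstress}. The only cosmetic difference is that the paper fixes a spanning tree up front and verifies the edge relation on non-tree edges, whereas you verify path-independence directly via cycle-space decomposition---these are equivalent formulations of the same argument.
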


\begin{proof}
    \ref{lem:dualframework1}:
    Fix a vertex $v_0 \in V$ and a spanning tree $T$ of $G$ (the existence of the latter stemming from the dual graph being connected).
    For each vertex $v \in V$,
    let $P_v = (v_0,\ldots,v_n)$ be the unique shortest path in $T$ from $v_0$ to $v = v_n$;
    note that $P_{v_i} = (v_0,\ldots,v_i)$ for each $i \in \{1,\ldots,n\}$.
    With this, we now construct our reciprocal diagram $(G,p)$ inductively as follows.
    First, we fix $p(v_0) = 0$.
    Next, fix a positive integer $n$ and assume that for every vertex $w \in V$ which is distance at most $n-1$ from $v_0$ we have already chosen $p(w)$.
    Now choose any $v \in V_n$ with minimal length path $P_v = (v_0,\ldots,v_n)$.
    Given $v_{n-1} \cap v = \sigma \in \widetilde{E}$,
    we now set $p(v) = p(v_{n-1}) + \omega(\sigma) x_\sigma(v,v_{n-1})$.

    We first observe that \cref{eq:dualframework} holds for every edge of $T$.
    Choose any edge $vw \in E$ that is not an edge of $T$ and fix $\sigma \in \widetilde{E}$ to be the maximal face that separates the regions $v,w \in V$.
    Since $T$ is a spanning tree,
    there exists a unique path $P = (v_1,\ldots,v_n)$ in $T$ with $v_1 = w$ and $v_n = v$.
    For each $i \in \{1,\ldots,n-1\}$,
    fix $\sigma_i \in \widetilde{E}$ to be the maximal face that separates the regions $v_i$ and $v_{i+1}$.
    Since $(v_1,\ldots,v_n,v_1)$ is a cycle,
    it follows from \Cref{lem:dualsums} that
    \begin{align*}
        p(v)- p(w) = \sum_{i=1}^{n-1} p(v_{i+1})- p(v_i) = \sum_{i=1}^{n-1} \omega(\sigma_i) x_{\sigma_i}(v_{i+1},v_i) = \omega(\sigma) x_\sigma(v,w).
    \end{align*}
    Hence \cref{eq:dualframework} holds for every edge of $G$.

    Finally,
    suppose there exists another reciprocal diagram $(G,q)$ that satisfies the same property as $(G,p)$.
    By translating $(G,q)$ we may assume that $q(v_0) = p(v_0)$.
    Now choose any vertex $v \in V$ with minimal length path $P_v = (v_0,\ldots,v_n)$.
    We now see that
    \begin{align*}
        q(v) =  q(v_0) + \sum_{i=1}^n q(v_i) - q(v_{i-1}) =  q(v_0) + \sum_{i=1}^n p(v_i) - p(v_{i-1}) = p(v) - p(v_0) + q(v_0) = p(v), 
    \end{align*}
    and so $q = p$.

    \ref{lem:dualframework2}:
    Choose any $\tau \in \widetilde{V}$.
    Fix the unique (up to orientation) cyclic ordering $(\sigma_1,\ldots,\sigma_n)$ of the elements of $\widetilde{E}$ that contain $\tau$ and the unique cyclic ordering $(v_1,\ldots,v_n)$ of the elements of $V$ that contain $\tau$ where $\sigma_i \cap \sigma_{i+1} = v_i$ for each $i \in \{1,\ldots,n-1\}$ and $\sigma_n \cap \sigma_1 = v_n$.
    Given $v_{n+1} = v_1$,
    we see that
    \begin{align*}
        \sum_{i=1}^n \omega(\sigma_i) x_{\sigma_i}(v_{i+1},v_i) = \sum_{i=1}^n p(v_{i+1}) - p(v_i) = 0.
    \end{align*}
    As this holds for each $\tau \in \widetilde{V}$,
    $\omega$ is a rational balanced weighting of $C$ by \Cref{lem:dualstress}.
\end{proof}

With this we are now ready to prove \Cref{t:main} and \Cref{cor:main}.

\begin{proof}[Proof of \Cref{t:main}]
    % The equivalence of \ref{t:main1} and \ref{t:main2} is proven in \Cref{lem:extre-decomp}.  
    Fix $\omega$ to be the unique rational balanced weighting of $C$ that is associated to $(G,p)$ as given in \Cref{lem:dualframework}.
    By scaling both $(G,p)$ and $\omega$ by some positive rational scalar,
    we may suppose that $\omega$ is a balanced weighting (in that it takes only positive integer values).
    We may also suppose without loss of generality that $p(u) = 0$ for some fixed vertex $u \in V$.

    \ref{t:main1} $\implies$ \ref{t:main3}:
    Suppose that $(G,p)$ is not direction rigid.
    Then the linear space $C(G,p)$ has dimension at least $d+2$.
    Since $C(G,p)$ is defined solely by rational coefficient linear equations (i.e., the edge directions of $(G,p)$),
    there exists a rational point $q$ such that $q \neq p$ and $q(u)=p(u) = 0$.
    For sufficiently small and rational $\varepsilon > 0$,
    we note that the realisation $\tilde{p} := p +\varepsilon q$ is: (i) rational, (ii) has the property that $\tilde{p}(v) - \tilde{p}(w)$ is a positive scaling of $x_{\sigma}(v,w)$ for each $\sigma \in \widetilde{E}$ separating the regions $v$ and $w$, and (iii) is not a homothetic scaling of $p$ (since $p$ and $q$ are linearly independent).
    We now fix $p'$ to be the integer-valued realisation of $G$ formed from $\tilde{p}$ by some positive integer scaling.
    Fix $\omega'$ to be the balanced weighting of $C$ formed from $(G,p')$ as given in \Cref{lem:dualframework}.
    Since $p$ and $p'$ are linearly independent,
    it follows that $\omega$ and $\omega'$ are also linearly independent.
    Hence $C$ is not extremal.
    
    \ref{t:main3} $\implies$ \ref{t:main1}:
    Suppose that $C$ is not extremal.
    Then there exists a balanced weighting $\omega'$ of $C$ that is linearly independent of $\omega$.
    Fix $(G,p')$ to be the unique reciprocal diagram associated to $\omega'$ with $p'(u) = p(u) = 0$ as given in \Cref{lem:dualframework}.
    Since all reciprocal diagrams of $C$ are direction equivalent,
    both $(G,p)$ and $(G,p')$ are direction equivalent.
    Suppose for contradiction that $(G,p)$ and $(G,p')$ are homothetic.
    Then there exists some $\lambda \in \mathbb{R}$ such that $p'(v) = \lambda p(v)$ for all $v \in V$.
    It follows then from \Cref{lem:dualframework} that $\omega' = \lambda \omega$,
    contradicting that $\omega,\omega'$ are linearly independent.
    This now concludes the proof.    
\end{proof}

\begin{proof}[Proof of \Cref{cor:main}]
    The equivalence of \ref{cor:main1} and $(G,p)$ being direction rigid follows from \Cref{t:main}, and the equivalence of direction rigidity and infinitesimal rigid follows from \Cref{t:paralleldrawing}.
\end{proof}

\section{Tropical curves and planar rigidity}\label{sec:comb}

In this section, we apply our results from the previous section to the restricted class of \emph{tropical curves},
i.e., tropical hypersurfaces of $\mathbb{R}^2$.
Since a tropical curve only contains dimension 0 points (the elements of $\widetilde{V}$) and one-dimensional line segments and infinite rays (the elements of $\widetilde{E}$),
we now opt to refer to the elements of $\widetilde{V}$ as the \emph{vertices} of $C$ and the elements of $\widetilde{E}$ as the \emph{edges} of $C$.
We also refer to any infinite one-way ray as a \emph{half-edge} if we wish to differentiate them.

\subsection{Combining parallel redrawings with \texorpdfstring{\Cref{cor:main}}{the main corollary}}

An immediate consequence of \Cref{t:asimowroth} and \Cref{cor:main} is the following combinatorial corollary.

\begin{corollary}\label{cor:rigidgeneric}
    Let $C$ be an extremal tropical curve.
    Then the dual graph $G$ of $C$ is rigid in $\mathbb{R}^2$.
\end{corollary}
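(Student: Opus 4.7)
The plan is to chain together \Cref{cor:main}, the generic nature of $2$-dimensional rigidity, and \Cref{t:asimowroth} to promote infinitesimal rigidity of a single reciprocal diagram into generic rigidity of the abstract graph.

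First, I would fix a reciprocal diagram $(G,p)$ of $C$; such a diagram exists by \Cref{lem:dualframework}\ref{lem:dualframework1}, applied to any balanced weighting of $C$ (for instance, the weighting induced by the subdivided Newton polytope). Since $C$ is extremal, \Cref{cor:main} immediately implies that $(G,p)$ is infinitesimally rigid. I would then separate the trivial boundary case $|V(G)|=1$ (which is vacuously rigid) from the main case $|V(G)|\geq 2$. In the main case, infinitesimal rigidity forces $(G,p)$ to have affine dimension $2$ in $\mathbb{R}^2$ (otherwise all points are collinear, and with $|V|\geq 2$ a non-trivial infinitesimal flex is produced by a rotation), so by the discussion following the rigidity matrix definition we get $\rank R(G,p)=2|V(G)|-3$.

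Next, I would invoke the generic nature of $2$-dimensional rigidity, namely that the rank of $R(G,q)$ is a lower semi-continuous function of $q$ that attains its maximum on a Zariski-open (and in particular generic) set of realisations. Since $(G,p)$ already achieves the upper bound $2|V(G)|-3$ from \Cref{t:maxwell}, every generic realisation $(G,q)$ must also satisfy $\rank R(G,q)=2|V(G)|-3$, and is therefore infinitesimally rigid. Applying \Cref{t:asimowroth} to any such generic realisation yields that $(G,q)$ is rigid, which by definition means the graph $G$ is rigid in $\mathbb{R}^2$.

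The only real obstacle is to make sure that the reciprocal diagram $(G,p)$ genuinely has affine dimension $2$, so that the standard bound $\rank R(G,p)\leq 2|V(G)|-\binom{3}{2}$ applies tightly; I would handle this by noting that a reciprocal diagram with all points collinear would force every edge of $C$ to be perpendicular to a common line, contradicting the fact that a tropical curve (with more than one edge, which holds whenever $|V(G)|\geq 2$) contains edges in at least two distinct directions. Beyond this sanity check, the argument is a direct composition of results already collected in \Cref{subsec:rigid} and \Cref{sec:hyper}.
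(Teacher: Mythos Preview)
Your proposal is correct and follows essentially the same approach as the paper, which simply states that the corollary is an immediate consequence of \Cref{t:asimowroth} and \Cref{cor:main}; you have spelled out the implicit step of passing from infinitesimal rigidity of one realisation to generic rigidity of $G$ via the rank of $R(G,\cdot)$. One minor wrinkle: your claim that infinitesimal rigidity with $|V|\geq 2$ forces affine dimension $2$ fails when $C$ is a single line (so $|V|=2$ and the reciprocal diagram is a segment), but in that case $(G,p)$ is a simplex and $\rank R(G,p)=1=2|V|-3$ anyway, so the conclusion is unaffected.
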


Sadly the converse of \Cref{cor:rigidgeneric} is not true.
To see this,
fix $C \subset \mathbb{R}^2$ to be the tropical curve of the tropical polynomial
\begin{equation}\label{eq:prismf}
    f(x,y) =0 \oplus (x^{\otimes 8}) \oplus (y^{\otimes 8}) \oplus (1 \otimes x \otimes y) \oplus (1 \otimes x^{\otimes 5} \otimes y) \oplus (1 \otimes x \otimes y^{\otimes 5}).
\end{equation}
The tropical curve that is constructed from $f$ is not extremal,
as can be seen in \Cref{fig:3prismflexibletropcurve}.

\begin{figure}[htp]
\begin{center}
    \includegraphics[width=0.75\textwidth]{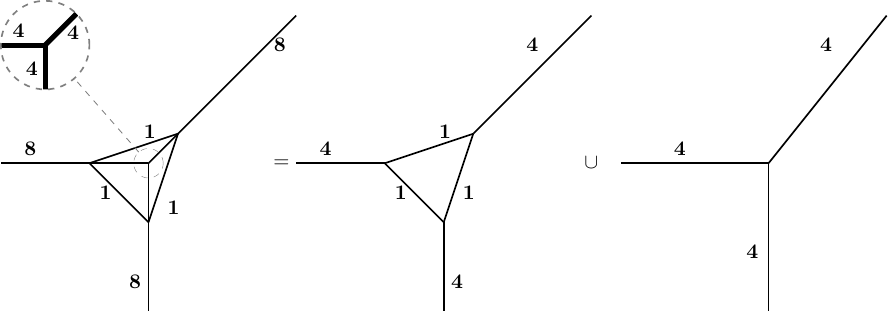}
    % \begin{tikzpicture}[scale = 0.75]
    % \begin{scope}
	% \draw[edge] (0,0)--(0,-2);   
	% \draw[edge] (0,0)--(-2,0);  
	% \draw[edge] (0,0)--(1.5,1.5);  
	% \draw[edge] (0,-1)--(-1,0);   
	% \draw[edge] (0,-1)--(0.5,0.5);  
	% \draw[edge] (-1,0)--(0.5,0.5); 
    % \end{scope}
    % \begin{scope}[xshift = 160]
	% \draw[edge] (0,-1)--(0,-2);   
	% \draw[edge] (-1,0)--(-2,0);  
	% \draw[edge] (0.5,0.5)--(1.5,1.5);  
	% \draw[edge] (0,-1)--(-1,0);   
	% \draw[edge] (0,-1)--(0.5,0.5);  
	% \draw[edge] (-1,0)--(0.5,0.5); 
    % \end{scope}
    % \begin{scope}[xshift = 270]
	% \draw[edge] (0,0)--(0,-2);   
	% \draw[edge] (0,0)--(-2,0);  
	% \draw[edge] (0,0)--(1.5,1.5);
    % \end{scope}
    % \end{tikzpicture}
\end{center}
\caption{(Left): The weighted tropical curve of the tropical polynomial given in \cref{eq:prismf}. (Right): Its decomposition into two weighted tropical curves.}
\label{fig:3prismflexibletropcurve}
\end{figure}

The 1-skeleton $(G,p)$ of the subdivided Newton polytope of $f$ is the framework pictured in \Cref{fig:3prismflexible}.
The rigidity matrix of $(G,p)$,
\begin{align*}
    R(G,p) = 
    \begin{bmatrix}
        -4 & 0 & 4 & 0 & 0 & 0 & 0 & 0 & 0 & 0 & 0 & 0 \\
        0 & -4 & 0 & 0 & 0 & 4 & 0 & 0 & 0 & 0 & 0 & 0 \\
        0 & 0 & 4 & -4 & -4 & 4 & 0 & 0 & 0 & 0 & 0 & 0 \\
        0 & 0 & 0 & 0 & 0 & 0 & -8 & 0 & 8 & 0 & 0 & 0 \\
        0 & 0 & 0 & 0 & 0 & 0 & 0 & -8 & 0 & 0 & 0 & 8 \\
        0 & 0 & 0 & 0 & 0 & 0 & 0 & 0 & 8 & -8 & -8 & 8 \\
        1 & 1 & 0 & 0 & 0 & 0 & -1 & -1 & 0 & 0 & 0 & 0 \\
        0 & 0 & -3 & 1 & 0 & 0 & 0 & 0 & 3 & -1 & 0 & 0 \\
        0 & 0 & 0 & 0 & -1 & 3 & 0 & 0 & 0 & 0 & 1 & -3
    \end{bmatrix}
\end{align*}
has rank $8 < 2 \cdot 6 - 3$,
hence $(G,p)$ is infinitesimally flexible and $C$ is not extremal by \Cref{t:main}.
However $G$ is $(2,3)$-tight and so rigid in $\mathbb{R}^2$ by \Cref{t:laman}.
Furthermore,
as $(G,p)$ is rigid\footnote{This is a consequence of the framework being prestress stable. See \cite{cw96} for more details on this concept.},
it also follows that infinitesimal rigidity cannot be replaced by rigidity in \Cref{cor:main}.

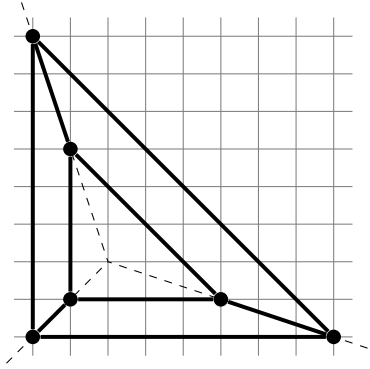
\begin{figure}[htp]
\begin{center}
    \begin{tikzpicture}[scale = 0.5]
	\draw[gray] (0,-0.5) -- (0, 8.5);
	\draw[gray] (1,-0.5) -- (1, 8.5);
	\draw[gray] (2,-0.5) -- (2, 8.5);
	\draw[gray] (3,-0.5) -- (3, 8.5);
	\draw[gray] (4,-0.5) -- (4, 8.5);
	\draw[gray] (5,-0.5) -- (5, 8.5);
	\draw[gray] (6,-0.5) -- (6, 8.5);
	\draw[gray] (7,-0.5) -- (7, 8.5);
	\draw[gray] (8,-0.5) -- (8, 8.5);
   
	\draw[gray] (-0.5,0) -- (8.5,0);
	\draw[gray] (-0.5,1) -- (8.5,1);
	\draw[gray] (-0.5,2) -- (8.5,2);
	\draw[gray] (-0.5,3) -- (8.5,3);
	\draw[gray] (-0.5,4) -- (8.5,4);
	\draw[gray] (-0.5,5) -- (8.5,5);
	\draw[gray] (-0.5,6) -- (8.5,6);
	\draw[gray] (-0.5,7) -- (8.5,7);
	\draw[gray] (-0.5,8) -- (8.5,8);
   
        \node[vertex] (11) at (1,1) {};
	\node[vertex] (12) at (5,1) {};
	\node[vertex] (13) at (1,5) {};
		
	\node[vertex] (21) at (0,0) {};
	\node[vertex] (22) at (8,0) {};
	\node[vertex] (23) at (0,8) {};

        \draw[dashed] (-0.3,8.9) -- (2,2);
        \draw[dashed] (8.9,-0.3) -- (2,2);
        \draw[dashed] (-0.7,-0.7) -- (2,2);
			
	\draw[edge] (11)edge(12);
	\draw[edge] (12)edge(13);
	\draw[edge] (13)edge(11);
		
	\draw[edge] (21)edge(22);
	\draw[edge] (22)edge(23);
	\draw[edge] (23)edge(21);
			
	\draw[edge] (11)edge(21);
	\draw[edge] (12)edge(22);
	\draw[edge] (13)edge(23);   
    \end{tikzpicture}
\end{center}
\caption{The 1-skeleton of the subdivided Newton polytope constructed from the tropical polynomial given in \cref{eq:prismf}, with the $\mathbb{Z}^2$ lattice represented by grey lines. The framework is not direction rigid as the vertices of the larger triangle can be slid up and down the dotted lines in unison whilst maintaining the correct edge directions.}
\label{fig:3prismflexible}
\end{figure}

\subsection{Applications of \texorpdfstring{\Cref{cor:rigidgeneric}}{corollary}}

We can use \Cref{cor:rigidgeneric} to determine structural properties for extremal tropical curves.
For the following results,
the degree of a vertex of a tropical curve is the number of edges it is contained in,
and a vertex is said to be \emph{trivalent} if it is contained in exactly three edges.

\begin{corollary}
    Let $C$ be an extremal tropical curve.
    Then $C$ contains a trivalent vertex.
    Furthermore,
    if $C$ contains exactly one trivalent vertex,
    then all other vertices of $C$ have degree 4 and $C$ contains exactly three half-edges.
\end{corollary}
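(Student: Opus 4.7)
The plan is to play the rigidity-theoretic edge bound on the dual graph against a planar Euler-and-face-degree identity. I will tacitly assume $V \geq 1$; the only extremal alternative is a single bi-infinite line, to which the statement is vacuous.

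\textbf{Setup.} Write $v_k$ for the number of degree-$k$ vertices of $C$ and $V = \sum_{k \geq 3} v_k$ for the total number (the balancing condition forces $v_k = 0$ for $k \leq 2$). Let $h$ denote the number of half-edges of $C$. The dual graph $G$ embeds canonically in $\mathbb{R}^2$ --- each vertex of $G$ placed inside its region, each edge drawn across the corresponding maximal face --- so that $|E(G)| = |\widetilde{E}|$. I claim the resulting planar embedding has $V + 1$ faces, where the bounded face surrounding a ridge $\tau$ has length $\deg(\tau)$, and the single outer face has length $h$. Granted this, Euler's formula and the face-handshake identity yield
\begin{align*}
    |V(G)| \;=\; |\widetilde{E}| - V + 1, \qquad 2|\widetilde{E}| \;=\; \sum_{k \geq 3} k\, v_k \;+\; h.
\end{align*}

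\textbf{Rigidity bound.} \Cref{cor:rigidgeneric} says that $G$ is rigid in $\mathbb{R}^2$, so \Cref{t:maxwell} gives $|E(G)| \geq 2|V(G)| - 3$, which together with the Euler identity rearranges to $|\widetilde{E}| \leq 2V + 1$. Moreover, $V \geq 1$ forces the Newton polygon of any defining tropical polynomial to be $2$-dimensional, whose boundary in the dual subdivision carries at least three lattice edges; hence $h \geq 3$.

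\textbf{Conclusion.} If $v_3 = 0$ then $\sum_k k v_k \geq 4V$, so the face-handshake gives $2|\widetilde{E}| \geq 4V + h \geq 4V + 3$, i.e.\ $|\widetilde{E}| \geq 2V + \tfrac{3}{2}$, contradicting $|\widetilde{E}| \leq 2V + 1$. This proves the first statement. For the second, suppose $v_3 = 1$. Then
\begin{align*}
    \sum_{k \geq 3} k v_k \;=\; 3 + \sum_{k \geq 4} k v_k \;\geq\; 3 + 4(V-1) \;=\; 4V - 1,
\end{align*}
with equality iff $v_k = 0$ for every $k \geq 5$. Combining with the face-handshake yields $|\widetilde{E}| \geq 2V + (h-1)/2$; playing this against $|\widetilde{E}| \leq 2V + 1$ and $h \geq 3$ forces $h = 3$, $|\widetilde{E}| = 2V + 1$, and equality throughout the chain, so $v_k = 0$ for all $k \geq 5$. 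Thus every vertex other than the trivalent one has degree exactly $4$, and $C$ has exactly three half-edges.

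\textbf{Anticipated obstacle.} The only non-formal ingredient is the planar-embedding bookkeeping: that $|F(G)| = V + 1$ with bounded faces bijectively matching vertices of $C$ (face length $=$ vertex degree) and outer face of length $h$. This is a classical duality statement for tropical curves, implicit in the identification of $G$ with the $1$-skeleton of the subdivided Newton polytope via \Cref{p:naturalweighting} and in the cyclic-ordering argument behind \Cref{lem:dualstress}. I would spell it out by analysing the local cyclic order of regions around each ridge (as in \Cref{lem:dualframework}), together with the observation that every sufficiently large circle meets $C$ only in the half-edges, cutting the boundary at infinity into $h$ arcs that lie one-to-one in the unbounded regions and cyclically identify the outer-face boundary walk.
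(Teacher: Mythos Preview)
Your proof is correct and takes essentially the same approach as the paper: both combine the rigidity bound $|E(G)| \geq 2|V(G)| - 3$ from \Cref{cor:rigidgeneric} and \Cref{t:maxwell} with Euler's formula and the face-handshake identity on the planar dual graph. The paper phrases the count as a case analysis on the number of triangular faces of $G$ (reaching $|E(G)| \leq 2|V(G)| - 4$ in each bad case), whereas you pivot on the equivalent inequality $|\widetilde{E}| \leq 2V + 1$ and run explicit bounds on $\sum_k k v_k$; your version is also more careful in making the $h \geq 3$ step explicit, which the paper leaves implicit.
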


\begin{proof}
    First suppose that $C$ that either: (i) $C$ has no trivalent vertices, (ii) $C$ has exactly one trivalent vertex but more than three half-edges, or (iii) $C$ has exactly one trivalent vertex, exactly three half-edges but a vertex of degree 5.
    If (i) or (ii) hold, then the dual graph $G=(V,E)$ of $C$ contains at most one triangle,
    while if (iii) holds then $G$ has exactly two triangles and a face with 5 sides.
    By Euler's formula for planar graphs,
    we see that in either case $G$ has at most $2|V|-4$ edges.
    Hence $G$ is flexible in $\mathbb{R}^2$ by \Cref{t:maxwell}.
    The result now follows from \Cref{cor:rigidgeneric}.
\end{proof}

\begin{corollary}
    Let $C$ be a tropical curve with at least 7 faces.
    If every face of $C$ has at most three sides,
    then $C$ is not extremal.
\end{corollary}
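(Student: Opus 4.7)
The plan is to prove the contrapositive by applying \Cref{cor:rigidgeneric}: if $C$ were extremal, then its dual graph $G = (V,E)$ would need to be rigid in $\mathbb{R}^2$. I would then combine Maxwell's edge-count necessary condition with a simple degree count to derive a contradiction when $|V| \geq 7$.

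First I would translate the hypothesis into the language of the dual graph. By the very construction of $G$ in \Cref{def:dualframework}, the faces of $C$ (the closures of the connected components of $\mathbb{R}^2 \setminus C$) correspond bijectively to the vertices of $G$, and the number of sides of a face (i.e.\ the number of maximal faces of $C$ on its boundary) equals the degree of the corresponding vertex in $G$. Hence the assumption ``every face of $C$ has at most three sides'' is equivalent to $\deg_G(v) \leq 3$ for every $v \in V$. Summing over all vertices and applying the handshake lemma yields
\begin{equation*}
    2|E| \;=\; \sum_{v \in V} \deg_G(v) \;\leq\; 3|V|, \qquad \text{so} \qquad |E| \leq \tfrac{3}{2}|V|.
\end{equation*}

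Next, suppose for contradiction that $C$ is extremal. Then by \Cref{cor:rigidgeneric} the dual graph $G$ is rigid in $\mathbb{R}^2$. Since $|V|$ is at least $7 \geq 2$, \Cref{t:maxwell} (applied with $d=2$) gives the necessary condition $|E| \geq 2|V| - 3$. Combining this with the upper bound obtained above yields
\begin{equation*}
    2|V| - 3 \;\leq\; |E| \;\leq\; \tfrac{3}{2}|V|,
\end{equation*}
which rearranges to $|V| \leq 6$, contradicting the assumption that $C$ has at least 7 faces.

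The argument is essentially a one-line combinatorial count once the dictionary between sides of faces and vertex degrees in the dual graph is in place, so there is no real obstacle; the only subtlety is making sure that half-edges are correctly accounted for in the degree count, but since each maximal face of $C$ (whether bounded or an infinite ray) corresponds to exactly one edge of $G$ incident to both of its adjacent regions, the handshake lemma applies verbatim.
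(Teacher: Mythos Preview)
Your proof is correct and follows essentially the same approach as the paper: translate the face-side bound into a maximum-degree-3 condition on the dual graph, apply the handshake lemma to get $|E| \leq \tfrac{3}{2}|V|$, and combine this with the Maxwell count $|E| \geq 2|V|-3$ coming from \Cref{cor:rigidgeneric} and \Cref{t:maxwell} to force $|V| \leq 6$. The only difference is that you frame it explicitly as a contrapositive and spell out the face/vertex dictionary a bit more, but the argument is identical.
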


\begin{proof}
    If $C$ contains no face with 4 or more sides then its dual graph $G=(V,E)$ has maximal degree 3.
    By applying the hand-shaking lemma,
    we see that $|E| \leq 3|V|/2$.
    Since $|V| \geq 7$,
    it follows that $|E|<2|V|-3$ and so $G$ is flexible in $\mathbb{R}^2$ by \Cref{t:maxwell}.
    The result now follows from \Cref{cor:rigidgeneric}.
\end{proof}

\begin{corollary}
    Let $C$ be a tropical curve that contains two half-edges $\sigma,\sigma'$ that share a vertex $\tau$ but do not share a face.
    Then $C$ is not extremal.
\end{corollary}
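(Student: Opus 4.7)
The plan is to produce a direction-equivalent redrawing of a reciprocal diagram of $C$ which is not homothetic to the original, and then invoke \Cref{cor:main}. Take $(G,p)$ to be the $1$-skeleton of the subdivided Newton polytope $\mathrm{SD}(f)$ for any tropical polynomial $f$ with $C = \mathbb{V}(f)$. Under duality, $\tau$ corresponds to a face $F_\tau$ of $\mathrm{SD}(f)$, and the half-edges $\sigma,\sigma'$ correspond to edges $e,e'$ of $F_\tau$ lying on $\partial \mathrm{NP}(f)$; the hypothesis that $\sigma$ and $\sigma'$ share no face of $C$ means $e$ and $e'$ are non-adjacent in the polygon $F_\tau$.

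Label the vertices of $F_\tau$ cyclically as $A,B,X_1,\dots,X_m,C,D,Y_1,\dots,Y_n$ with $e = AB$ and $e' = CD$. The upper arc $B \to X_1 \to \dots \to X_m \to C$ and the lower arc $D \to Y_1 \to \dots \to Y_n \to A$ split the rest of $\mathrm{SD}(f)$ into two subcomplexes $U$ and $L$. Setting
\[
V_U := V(U) \cup \{B, X_1, \dots, X_m, C\}, \qquad V_L := V(L) \cup \{A, Y_1, \dots, Y_n, D\},
\]
one has $V = V_U \sqcup V_L$, because any vertex shared between $V(U)$ and $V(L)$ would have to lie on both the upper and lower arcs of $F_\tau$, which are disjoint.

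The key construction is the $6$-parameter family of maps $q : V \to \R^2$ defined by $q(v) = \lambda_U\, p(v) + t_U$ for $v \in V_U$ and $q(v) = \lambda_L\, p(v) + t_L$ for $v \in V_L$. Any edge $uv$ of $G$ with both endpoints in $V_U$ or both in $V_L$ automatically satisfies the direction-equivalence condition, since $q(u) - q(v)$ is then a scalar multiple of $p(u) - p(v)$. The only edges between $V_U$ and $V_L$ are $e$ and $e'$, each yielding one scalar parallelism constraint, so the solution subspace inside this $6$-parameter family has dimension at least $6 - 2 = 4$. The trivial subspace $T(G,p)$ appears as the $3$-dimensional diagonal $\{\lambda_U = \lambda_L,\, t_U = t_L\}$ of the family, so $\dim C(G,p) > \dim T(G,p)$; hence $(G,p)$ is direction-flexible and $C$ is not extremal by \Cref{cor:main}. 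The main technical points will be the cleanness of the partition $V = V_U \sqcup V_L$ and the fact that only $e$ and $e'$ cross between the parts, both following from $U$ and $L$ lying on opposite sides of $F_\tau$ together with the disjointness of the two arcs.
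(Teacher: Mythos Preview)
Your argument is correct and takes a genuinely different route from the paper's.

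Both proofs rest on the same separation fact: the edges $e,e'$ dual to $\sigma,\sigma'$ form a $2$-edge cut of $G$. You extract this from the Newton polytope picture (the chord of $\mathrm{NP}(f)$ through $F_\tau$ joining the midpoints of $e$ and $e'$ lies in $F_\tau$ by convexity and separates $\mathrm{SD}(f)\setminus F_\tau$ into $U$ and $L$), whereas the paper extracts it from the tropical curve picture (the two rays $\sigma,\sigma'$ split $\mathbb{R}^2$ into two half-planes, so any path of faces must cross one of them). From that point the arguments diverge. The paper invokes \Cref{cor:rigidgeneric}, hence implicitly Asimow--Roth, to pass to \emph{generic} rigidity of $G$, then uses \Cref{t:laman} to find a $(2,3)$-tight spanning subgraph and obtains a contradiction from the edge count $i_H(X)+i_H(Y)+2\le 2|V|-4$. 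You instead stay with the specific realisation $p$ and exhibit an explicit $4$-dimensional family of parallel redrawings (independent homotheties on the two sides, subject to two scalar parallelism constraints from $e$ and $e'$), concluding via \Cref{cor:main} directly.

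Your approach is more elementary in that it avoids both Asimow--Roth and Laman's theorem; it also makes the non-trivial redrawing completely explicit. The paper's approach, by contrast, illustrates how \Cref{cor:rigidgeneric} feeds combinatorial rigidity obstructions back into tropical geometry. One small point: your one-line justification that $V(U)\cap V(L)=\emptyset$ (``a shared vertex would lie on both arcs'') is slightly loose; the clean statement is that every vertex of $\mathrm{SD}(f)$ lies strictly on one side of the chord through $F_\tau$, since the chord's interior is in $\operatorname{int}(F_\tau)$ and its endpoints are interior points of $e,e'$, so no vertex of the subdivision meets it. With that said, the dimension count $6-2\ge 4>3$ then goes through exactly as you wrote.
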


\begin{proof}
    Suppose for contradiction that $C$ is extremal.
    By \Cref{cor:rigidgeneric},
    the dual graph $G=(V,E)$ of $C$ is rigid in $\mathbb{R}^2$.
    Let $e$ and $e'$ be the edges of $G$ that correspond to $\sigma$ and $\sigma'$ respectively.
    Since $\sigma,\sigma'$ do not share a face,
    $e$ and $e'$ are independent (i.e., they do not share a vertex).
    Label the two connected components of $\mathbb{R}^2 \setminus (\sigma \cup \sigma')$ as $A_1,A_2$.
    Note that any continuous path travelling from a face in $A_1$ to a connected component in $A_2$ must cross either $\sigma$ or $\sigma'$.
    Hence $\{e,e'\}$ is a separating edge set of $G$.
    Since $G$ is 2-connected (an immediate consequence of it being rigid in $\mathbb{R}^2$),
    $G - \{e,e'\}$ contains exactly two connected components.
    Let $X,Y \subset V$ be the vertex sets of the connected components of $G - \{e,e'\}$.
    As $e$ and $e'$ are independent,
    both $X$ and $Y$ contain at least two elements each.

    By \Cref{t:laman},
    there exists a $(2,3)$-tight spanning subgraph $H=(V,F)$ of $G$.
    The graph $H$ is 2-connected (since it is rigid in $\mathbb{R}^2$),
    and so $H$ contains both edges $e,e'$.
    We now note that
    \begin{align*}
        i_H(V) = i_H(X) + i_H(Y) + 2 \leq 2|X| - 3 + 2|Y| - 3 + 2 = 2|V| - 4,
    \end{align*}
    contradicting that $H$ is $(2,3)$-tight.
    This now concludes the proof.
\end{proof}

\section{Computing extremality in general tropical varieties}\label{sec:decomp}

In this section we describe computational methods for determining extremality and for constructing extremal decompositions.

\subsection{Rigidity matrices for tropical varieties}\label{subsec:rc}

Fix $C$ to be a $k$-dimensional tropical variety in $\mathbb{R}^d$.
Define $\widetilde{R}(C)$ to be the $|\widetilde{E}| \times d|\widetilde{V}|$ integer-valued matrix where for each $\sigma \in \widetilde{E}$ and each $\tau \in \widetilde{V}$ we set the $d$ coordinates corresponding to the pair $(\sigma,\tau)$ to be $z_{\tau}(\sigma)$ if $\tau \subset \sigma$ and $(0,\ldots,0)$ otherwise;
e.g., for each $\sigma \in \widetilde{E}$, the row of $\widetilde{R}(C)$ corresponding to $\sigma$ has the form
\begin{align*}
	\Big[ \quad \overbrace{0 ~ \cdots ~ 0 }^{\tau \notin \sigma} \qquad \cdots \qquad \overbrace{z_{\tau}(\sigma)^\top}^{\tau \in \sigma} \quad \Big].
\end{align*}
For each $\tau \in \widetilde{V}$,
fix $t=d-(k-1)$ linearly independent vectors $x_1(\tau), \ldots, x_t(\tau) \in L_\tau^\perp \cap \mathbb{Z}^d$,
and define the $d|\widetilde{V}| \times t |\widetilde{V}|$ integer-valued matrix $L(C)$ by setting the column corresponding to $(\tau,i) \in \widetilde{V}\times \{1,\ldots,t\}$ to be the vector with $x_i(\tau)$ for the $d$ rows corresponding to $\tau$ and $0$ elsewhere;
e.g., using the notation $\mathbf{0}$ for the $d$-dimensional zero vector, the $d$ rows of $L(C)$ corresponding to $\tau \in \widetilde{V}$ have the form
\begin{align*}
    \tau ~ \Big[ \quad \overbrace{x_1(\tau) ~ \cdots ~ x_t(\tau)}^{\tau}  \qquad \cdots \qquad \overbrace{\mathbf{0} ~ \cdots ~ \mathbf{0}}^{\tau' \neq \tau} \quad \Big].
\end{align*}
If $t= d$ (i.e., each element of $\widetilde{V}$ corresponds to a point) then we choose our $x_i(\tau)$ vectors to be the standard orthonormal basis of $\mathbb{R}^d$ so that $L(C)$ is simply an identity matrix.
We now define the $|\widetilde{E}| \times t|\widetilde{V}|$ integer-valued matrix $R(C) := \widetilde{R}(C)L(C)$.

\begin{lemma}\label{lem:rcweights}
    Let $C$ be a tropical variety with a weighting $\omega : \widetilde{E} \rightarrow \mathbb{Z}_{>0}$.
    Then the following properties are equivalent:
    \begin{enumerate}
        \item $\omega$ is a balanced weighting of $C$;
        \item $\omega^\top R(C) = 0$.
    \end{enumerate}
\end{lemma}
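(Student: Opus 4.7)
The plan is to perform a direct computation of the row vector $\omega^\top R(C)$ and interpret each of its entries as an inner product that captures exactly the balancing condition at a single ridge.

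First, I would unpack the matrix product $R(C) = \widetilde{R}(C) L(C)$ entry-by-entry. Label the columns of $\widetilde R(C)$ by pairs $(\tau',j) \in \widetilde V \times \{1,\dots,d\}$ and the columns of $L(C)$ by pairs $(\tau,i) \in \widetilde V \times \{1,\dots,t\}$. By definition, $[\widetilde R(C)]_{\sigma,(\tau',j)}$ is the $j$-th coordinate of $z_{\tau'}(\sigma)$ when $\tau' \subset \sigma$ and $0$ otherwise, while $[L(C)]_{(\tau',j),(\tau,i)}$ is the $j$-th coordinate of $x_i(\tau)$ when $\tau' = \tau$ and $0$ otherwise. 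Since $L(C)$ vanishes except on the block $\tau'=\tau$, the multiplication collapses to
\begin{align*}
[R(C)]_{\sigma,(\tau,i)} = \begin{cases} \langle z_\tau(\sigma), x_i(\tau) \rangle & \text{if } \tau \subset \sigma, \\ 0 & \text{otherwise.} \end{cases}
\end{align*}

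Next, I would multiply on the left by $\omega^\top$. Grouping by ridges, the $(\tau,i)$-entry of $\omega^\top R(C)$ becomes
\begin{align*}
\bigl[\omega^\top R(C)\bigr]_{(\tau,i)} = \sum_{\sigma \supset \tau} \omega(\sigma)\, \langle z_\tau(\sigma), x_i(\tau)\rangle = \Bigl\langle \sum_{\sigma \supset \tau} \omega(\sigma)\, z_\tau(\sigma),\ x_i(\tau) \Bigr\rangle,
\end{align*}
where the sum runs over maximal faces $\sigma$ containing $\tau$. Hence $\omega^\top R(C) = 0$ if and only if, for every $\tau \in \widetilde V$, the vector $v_\tau := \sum_{\sigma \supset \tau} \omega(\sigma) z_\tau(\sigma)$ is orthogonal to each $x_i(\tau)$.

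Finally, I would invoke the dimension count to convert this orthogonality statement into the balancing condition. Since $\dim L_\tau = k-1$, the orthogonal complement $L_\tau^\perp$ has dimension $t = d - (k-1)$, matching the number of chosen vectors $x_1(\tau),\dots,x_t(\tau) \in L_\tau^\perp \cap \mathbb{Z}^d$. These vectors are linearly independent by construction, so they form a basis of $L_\tau^\perp$ as a real vector space. Therefore $v_\tau$ is orthogonal to each $x_i(\tau)$ precisely when $v_\tau \in (L_\tau^\perp)^\perp = L_\tau$, which is exactly the balancing condition at $\tau$ as stated in Section~\ref{sec:trop_var}. Combining the equivalences for every ridge $\tau$ yields the lemma.

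There is no substantive obstacle: the proof is an unwinding of the block structure of $\widetilde R(C)$ and $L(C)$. The only point that deserves explicit mention is the dimension match $t = \dim L_\tau^\perp$, which is what makes the columns of the $\tau$-block of $L(C)$ a full basis of $L_\tau^\perp$ and thereby lets testing against them detect membership in $L_\tau$.
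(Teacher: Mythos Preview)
Your proof is correct and follows essentially the same approach as the paper's own proof, which simply states that $\omega^\top R(C) = 0$ is equivalent to $\sum_{\sigma \supset \tau} \omega(\sigma) z_{\tau}(\sigma) \cdot x_i(\tau)= 0$ for all $\tau,i$ and that this is the balancing equation at $\tau$. You have spelled out in more detail the block computation of $R(C)$ and the dimension count showing the $x_i(\tau)$ form a basis of $L_\tau^\perp$, points the paper leaves implicit.
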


\begin{proof}
    We observe that $\omega^\top R(C) = 0$ if and only if the following equality holds for each $\tau \in \widetilde{V}$ and each $i \in \{1,\ldots,t\}$:
    \begin{align*}
        \sum_{\sigma \supset \tau} \omega(\sigma) z_{\tau}(\sigma) \cdot x_i(\tau)= 0.
    \end{align*}
    The above equation (when run over $i \in \{1,\ldots,t\}$) is equivalent to the balancing equation at $\tau$.
    This now concludes the proof.
\end{proof}

\begin{example}
    Take the tropical polynomial $f (x,y,z) = 0 \oplus x \oplus y \oplus z$,
    as depicted in \Cref{fig:tropical_hyperplane_3D}.
    The tropical hypersurface $\mathbb{V}(f)$ is a polyhedral cone with six maximal faces 
    \begin{align*}
        \sigma_{xy} = \{ (x,y,z) : z=0 \geq \max\{x,y\}\}, \qquad
        \sigma_{xz} = \{ (x,y,z) : y=0 \geq \max\{x,z\}\}, \\
        \sigma_{x0} = \{ (x,y,z) : y=z \geq \max\{x,0\}\}, \qquad
        \sigma_{yz} = \{ (x,y,z) : x=0 \geq \max\{y,z\}\}, \\
        \sigma_{y0} = \{ (x,y,z) : x=z \geq \max\{y,0\}\}, \qquad
        \sigma_{z0} = \{ (x,y,z) : x=y \geq \max\{z,0\}\},
    \end{align*}    
    and four ridges
    \begin{align*}
        \tau_{x} = \{ (x,y,z) : y=z=0 \geq x\}, \qquad
        \tau_{y} = \{ (x,y,z) : x=z=0 \geq y\}, \\
        \tau_{z} = \{ (x,y,z) : x=y=0 \geq z\}, \qquad
        \tau_{0} = \{ (x,y,z) : x=y=z \geq 0\}.
    \end{align*}
    We choose the $z_\tau(\sigma)$ vectors as follows:
    \begin{align*}
        \tau_{x} &: \qquad z_{\tau_{x}}(\sigma_{xy}) = (0,-1,0), \qquad z_{\tau_{x}}(\sigma_{xz}) = (0,0,-1), \qquad z_{\tau_{x}}(\sigma_{x0}) = (0,1,1);\\
        \tau_{y} &:  \qquad z_{\tau_{y}}(\sigma_{xy}) = (-1,0,0), \qquad z_{\tau_{y}}(\sigma_{yz}) = (0,0,-1), \qquad z_{\tau_{y}}(\sigma_{y0}) = (1,0,1);\\
        \tau_{z} &: \qquad z_{\tau_{z}}(\sigma_{xz}) = (-1,0,0), \qquad z_{\tau_{z}}(\sigma_{yz}) = (0,-1,0), \qquad z_{\tau_{z}}(\sigma_{z0}) = (1,1,0); \\
        \tau_{0} &: \qquad z_{\tau_{0}}(\sigma_{x0}) = (-1,0,0), \qquad z_{\tau_{z}}(\sigma_{y0}) = (0,-1,0), \qquad z_{\tau_{z}}(\sigma_{z0}) = (0,0,-1).
    \end{align*}
    With this, we see that
    \begin{align*}
        \widetilde{R}(\mathbb{V}(f)) = 
        \begin{bmatrix}
            0 & -1 & 0 & -1 & 0 & 0 & 0 & 0 & 0 & 0 & 0 & 0 \\
            0 & 0 & -1 & 0 & 0 & 0 & -1 & 0 & 0 & 0 & 0 & 0 \\
            0 & 1 & 1 & 0 & 0 & 0 & 0 & 0 & 0 & -1 & 0 & 0 \\
            0 & 0 & 0 & 0 & 0 & -1 & 0 & -1 & 0 & 0 & 0 & 0 \\
            0 & 0 & 0 & 1 & 0 & 1 & 0 & 0 & 0 & 0 & -1 & 0 \\
            0 & 0 & 0 & 0 & 0 & 0 & 1 & 1 & 0 & 0 & 0 & -1
        \end{bmatrix}.
    \end{align*}
    We also choose the $x_i(\tau)$ vectors as follows:
    \begin{align*}
        x_1(\tau_{y}) = x_1(\tau_{z}) = (1,0,0), \qquad  x_1(\tau_{x}) = x_2(\tau_{z}) = (0,1,0), \qquad x_2(\tau_{x}) = x_2(\tau_{y}) = (0,0,1),\\
        x_1(\tau_{0}) = (1,-1,0), \qquad x_2(\tau_{0}) = (1,1,-2).
    \end{align*}
    With this, we see that
    \begin{align*}
        L(\mathbb{V}(f)) = 
        \begin{bmatrix}
            0 & 0 & 0 & 0 & 0 & 0 & 0 & 0 \\
            1 & 0 & 0 & 0 & 0 & 0 & 0 & 0 \\
            0 & 1 & 0 & 0 & 0 & 0 & 0 & 0 \\
            0 & 0 & 1 & 0 & 0 & 0 & 0 & 0 \\
            0 & 0 & 0 & 0 & 0 & 0 & 0 & 0 \\
            0 & 0 & 0 & 1 & 0 & 0 & 0 & 0 \\
            0 & 0 & 0 & 0 & 1 & 0 & 0 & 0 \\
            0 & 0 & 0 & 0 & 0 & 1 & 0 & 0 \\
            0 & 0 & 0 & 0 & 0 & 0 & 0 & 0 \\
            0 & 0 & 0 & 0 & 0 & 0 & 1 & 1 \\
            0 & 0 & 0 & 0 & 0 & 0 & -1 & 1 \\
            0 & 0 & 0 & 0 & 0 & 0 & 0 & -2 
        \end{bmatrix}.
    \end{align*}
    Hence we obtain the rank 5 matrix
    \begin{align*}
        R(\mathbb{V}(f)) = \widetilde{R}(\mathbb{V}(f)) L(\mathbb{V}(f)) =
        \begin{bmatrix}
            -1 & 0 & -1 & 0 & 0 & 0 & 0 & 0 \\
            0 & -1 & 0 & 0 & -1 & 0 & 0 & 0 \\
            1 & 1 & 0 & 0 & 0 & 0 & -1 & -1 \\
            0 & 0 & 0 & -1 & 0 & -1 & 0 & 0 \\
            0 & 0 & 1 & 1 & 0 & 0 & 1 & -1 \\
            0 & 0 & 0 & 0 & 1 & 1 & 0 & 2
        \end{bmatrix}.
    \end{align*}
\end{example}

With this, we are now ready to prove \Cref{t:detect}.

\begin{proof}[Proof of \Cref{t:detect}]
    Fix $\omega$ to be a balanced weighting $\omega$ of $C$.
    By \Cref{lem:rcweights},
    $\omega \in \ker R(C)^\top$,
    and hence $\dim \ker R(C)^T \geq 1$.
    It follows from \Cref{lem:rcweights} that $n \leq \dim \ker R(C)^T$.
    Hence $\dim \ker R(C)^T = 1$ if and only if $n = 1$.
    
    Suppose that $\dim \ker R(C)^T >1$.
    Since $R(C)$ is an integer valued matrix,
    there exists elements $\omega_2,\ldots,\omega_n \in \ker R(C)^T \cap \mathbb{Z}^{\widetilde{E}}$ such that $\omega, \omega_2,\ldots,\omega_n$ are linearly independent.
    As every coordinate of $\omega$ is positive,
    for each $2\leq i \leq k$ there exists a sufficiently large scalar $a_i > 0$ such that
    $a_i \omega + \omega_i \in \mathbb{Z}_{> 0}^{\widetilde{E}}$.
    Define $\lambda_1 := \omega$ and $\lambda_i := a_i \omega + \lambda_i$ for each $2 \leq i \leq n$.
    By our construction,
    $\lambda_1,\ldots,\lambda_n$ are linearly independent vectors contained in the set $R(C) \cap \mathbb{Z}_{> 0}^{\widetilde{E}}$.
    The result now follows from \Cref{lem:rcweights}.
\end{proof}

\Cref{t:detect} informs us that, so long as we have obtained the vectors $z_\tau(\sigma)$ and $x_i(\tau)$ for each $\tau \in \widetilde{V}$,
we have a polynomial-time algorithm (with respect to $\widetilde{V}$) for determining whether a tropical variety is extremal.
We can also use it to obtain an inequality regarding the number of ridges and maximal faces of an extremal tropical variety.

\begin{corollary}\label{cor:extremalbound}
    Let $C$ be a $k$-dimensional extremal tropical variety in $\mathbb{R}^d$.
    Then
    \begin{align*}
        |\widetilde{E}| \leq (d-k+1)|\widetilde{V}| + 1.
    \end{align*}
\end{corollary}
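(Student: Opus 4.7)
The plan is to deduce the bound directly from \Cref{t:detect} together with the dimensions of the matrix $R(C)$ described in \Cref{subsec:rc}.

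First, I would recall the shape of $R(C) = \widetilde{R}(C) L(C)$. By construction, $R(C)$ has exactly $|\widetilde{E}|$ rows (one per maximal face) and $t|\widetilde{V}|$ columns, where $t = d-(k-1) = d-k+1$ is the codimension of a ridge's affine hull in $\mathbb{R}^d$. In particular, the rank of $R(C)$ is bounded above by the number of its columns,
\begin{equation*}
    \rank R(C) \leq t|\widetilde{V}| = (d-k+1)|\widetilde{V}|.
\end{equation*}

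Next, I would invoke \Cref{t:detect}: since $C$ is extremal, the dimension of the space of balanced weightings (allowing real coefficients) equals $1$, and hence $\rank R(C) = |\widetilde{E}| - 1$. Combining this equality with the column-count bound immediately yields
\begin{equation*}
    |\widetilde{E}| - 1 = \rank R(C) \leq (d-k+1)|\widetilde{V}|,
\end{equation*}
which rearranges to the desired inequality $|\widetilde{E}| \leq (d-k+1)|\widetilde{V}| + 1$.

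There is essentially no obstacle here: the content is packaged entirely inside \Cref{t:detect} and the explicit dimensions of the rigidity matrix built in \Cref{subsec:rc}. The only mild sanity check worth noting is that $t = d-k+1$ agrees with the number of columns we assigned to each ridge $\tau \in \widetilde{V}$ in $L(C)$, which is immediate from the definition since $L_\tau$ has dimension $k-1$ and therefore $L_\tau^\perp$ has dimension $d-k+1$.
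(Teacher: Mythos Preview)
Your argument is correct and matches the paper's own proof essentially line for line: bound $\rank R(C)$ by the number of columns $(d-k+1)|\widetilde{V}|$, then use \Cref{t:detect} to get $\rank R(C) = |\widetilde{E}|-1$ and rearrange.
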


\begin{proof}
    The number of columns ($(d-k+1)|\widetilde{V}|$) is an upper bound for the rank of $R(C)$.
    The result now follows from \Cref{t:detect}.
\end{proof}

The bound in \Cref{cor:extremalbound} is tight,
as can be observed from the extremal tropical curve given in \Cref{Fig:extremal-trop-quadratic} that has 4 vertices and $9 = 2 \cdot 4 + 1$ edges.
It is also best possible:
for example,
the extremal tropical curve given in \Cref{Fig:exs-trop-var}(b) has 9 vertices and $18 < 2 \cdot 9 + 1$ edges,
while the non-extremal tropical curve given in \Cref{fig:3prismflexibletropcurve} has 4 vertices and $9 = 2 \cdot 4 + 1$ edges.

\subsection{Decomposing tropical varieties into extremal components}

A \emph{convex cone} is a set $K \subseteq \mathbb{R}^n$ such that $ax+by \in K$ for all $x,y \in K$ and all $a,b \geq 0$.
A convex cone is also said to be \emph{strongly convex} if the equality $x + y = 0$ holds for two points $x,y \in K$ if and only if $x=y=0$.
A \emph{generating set} of a cone is a subset $A \subset K$ such that every element of $K$ is the sum of non-negative scalar copies of finitely many elements of $A$;
if a finite generating set exists then $K$ is said to be \emph{finitely generated}.
A \emph{minimal generating set} is any generating set of minimal cardinality over all possible generating sets.
Every finitely generated strongly convex cone has a minimal generating set, and the minimal generating set is unique up to positive scalar multiplication of its elements.

For a tropical variety $C$,
we fix $W(C)$ to be the set of rational partial balanced weightings of $C$.
With this, we are now ready to state our next result.

\begin{theorem}\label{t:allirreducible}
    Let $C$ be a tropical variety.
    If $C$ is not extremal then $W(C)$ is a finitely generated strongly convex cone with a minimal generating set $\Omega=\{\omega_1,\ldots,\omega_n\}$,
    with each $\omega_i$ being a strictly partial balanced weighting of $C$.
    Furthermore,
    given each $C_i$ is the tropical variety formed from the support of $\omega_i$,
    the set $\{C_1,\ldots,C_n\}$ is exactly the set of extremal tropical varieties contained in $C$.
\end{theorem}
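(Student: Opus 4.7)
My plan is to realise $W(C)$ as a strongly convex rational polyhedral cone and then reinterpret the statement as a claim about its extremal rays. By \Cref{lem:rcweights}, a rational non-negative weighting is balanced exactly when it lies in $\ker R(C)^\top$, so $W(C) = \ker R(C)^\top \cap \mathbb{Q}^{\widetilde{E}}_{\geq 0}$. First I would point out that this is the intersection of a rational linear subspace with the non-negative orthant, hence a rational polyhedral cone, and therefore finitely generated by the Farkas--Minkowski--Weyl theorem; strong convexity is immediate since $W(C) \cap (-W(C)) \subseteq \mathbb{Q}^{\widetilde{E}}_{\geq 0} \cap \mathbb{Q}^{\widetilde{E}}_{\leq 0} = \{0\}$. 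I then fix a minimal generating set $\Omega = \{\omega_1,\ldots,\omega_n\}$, which is unique up to positive rational scaling.

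The second step is to show every $\omega_i$ is strictly partial. Because $C$ is not extremal, \Cref{lem:extre-decomp}(iii) supplies a balanced weighting $\omega'$ linearly independent from $\omega_i$. If $\omega_i$ had full support, then choosing $t_0 := \min_{\sigma \in \widetilde{E}} \omega_i(\sigma)/\omega'(\sigma) > 0$ would force $\omega_i - t_0 \omega' \in W(C)$ with at least one zero coordinate, producing a non-trivial decomposition $\omega_i = (\omega_i - t_0 \omega') + t_0 \omega'$ in which neither summand is a scalar multiple of $\omega_i$. This contradicts the fact that a minimal generator of a strongly convex cone spans an extremal ray, so every $\omega_i$ must have some zero coordinate.

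For the third step, I would identify the $C_i$'s with exactly the extremal subvarieties of $C$ by two inclusions. On the one hand, if some $C_i$ were not extremal, then \Cref{lem:extre-decomp}(iii) applied to $C_i$ yields a second balanced weighting $\omega_i'$ on $C_i$, which, extended by zero, lies in $W(C)$ and is linearly independent from $\omega_i$; the same perturbation trick decomposes $\omega_i$ non-trivially, contradicting minimality. On the other hand, let $C'$ be any extremal subvariety of $C$ with balanced weighting $\omega'$, extended by zero to all of $C$. For any decomposition $\omega' = \mu_1 + \mu_2$ in $W(C)$, the supports satisfy $\operatorname{supp}(\mu_j) \subseteq C'$, so by the refinement construction from \Cref{sec:trop_var} each $\mu_j$ descends to a balanced partial weighting of $C'$.

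The main obstacle I foresee is the clean-up step that ties this together: an extremal tropical variety admits no non-zero balanced strictly partial weighting, since the support of any such weighting would refine to a proper tropical subvariety and contradict \Cref{lem:extre-decomp}(iv). Granted this observation, both $\mu_j$'s must be either zero or full balanced weightings of $C'$, and extremality of $C'$ forces them into $\mathbb{Q}_{\geq 0}\omega'$. Hence $\omega'$ spans an extremal ray of $W(C)$, must coincide up to positive scalar with some $\omega_i$, and therefore $C' = C_i$, completing the characterisation.
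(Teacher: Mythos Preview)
Your proposal is correct and follows the same overall plan as the paper: realise $W(C)$ as a strongly convex rational polyhedral cone via \Cref{lem:rcweights}, show each minimal generator is strictly partial by a subtraction/perturbation argument, and then identify the minimal generators with the extremal subvarieties of $C$. The only difference is in the packaging of the last step: the paper proves that the zero sets of the $\omega_i$ are pairwise incomparable and leaves the reader to extract the bijection with extremal subvarieties from this, whereas you argue both inclusions directly (each $C_i$ is extremal, and every extremal subvariety arises this way) via \Cref{lem:extre-decomp}, which is arguably the more transparent route.
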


\begin{proof}
    Since $W(C)$ is defined by finitely many rational hyperplanes,
    it is a finitely generated strongly convex cone with a minimal generating set contained in $\mathbb{Q}^{\widetilde{E}}_{\geq 0}$.
    We now scale the elements of our minimal generating set to obtain $\Omega \subset \mathbb{Z}^{\widetilde{E}}_{\geq 0}$.
    
    We now must prove no element of $\Omega$ is a balanced weighting.
    Suppose for contradiction that $\Omega \cap \mathbb{Z}^{\widetilde{E}}_{> 0} \neq \emptyset$.
    By relabelling $\Omega$ we may suppose that $\omega_1 \in \mathbb{Z}^{\widetilde{E}}_{> 0}$.
    By the minimality of $\Omega$ we have that $\omega_1,\omega_2$ are linearly independent.
    Hence there exists positive integers $a,b$ such that $\lambda := a \omega_1 - b \omega_2$ is a strictly partial balanced weighting.
    As $\omega_1$ is one of the minimal generators of $W(C)$ and $\omega_1 =  ( \lambda + b\omega_2)/a$,
    we must have that $\lambda = \sum_{i=1}^n a_i \omega_i$ for some non-negative scalars $a_1,\ldots,a_n$ where $a_1 >0$.
    However this is impossible,
    as $\omega_1 + x \in \mathbb{R}_{>0}^{\widetilde{E}}$ for all $x \in \mathbb{R}_{\geq 0}^{\widetilde{E}}$.
    Hence no element of $\Omega$ is a balanced weighting.
    
    It is now sufficient to show that the zero set of one generator does not contain the zero set of another.
    Suppose for contradiction that $\omega_i^{-1}(\{0\}) \subset \omega_j^{-1}(\{0\})$ for some $i\neq j$.
    Then there exists positive integers $a,b$ such that $\lambda := a \omega_1 - b \omega_2$ is a strictly partial balanced weighting and $\omega_i^{-1}(\{0\}) \subsetneq \lambda^{-1}(\{0\})$.
    As $\omega_i$ is one of the minimal generators of $W(C)$ and $\omega_i =  (b \omega_j + \lambda)/a$,
    we must have that $\lambda = \sum_{i=1}^n a_i \omega_i$ for some non-negative scalars $a_1,\ldots,a_n$ where $a_i >0$.
    However this is impossible,
    as $\omega_i^{-1}(\{0\}) \subsetneq \lambda^{-1}(\{0\})$.
    This now concludes the proof.
\end{proof}

Our aim now is to use \Cref{t:allirreducible} to break down tropical varieties into extremal parts.
To be more specific, we are aiming to obtain the following decomposition.

\begin{definition}
    A set of distinct extremal tropical varieties $\{C_1,\ldots,C_n\}$ contained in a tropical variety $C$ is said to be an \emph{extremal decomposition} if $C=\bigcup_{i=1}^n C_i$.
\end{definition}

The following lemma is an immediate consequence of the relationship between strict partial weightings proper extremal subvarieties.

\begin{lemma}\label{lem:deconstruct}
    Let $C$ be a tropical variety with strictly partial balanced weightings $\omega_1,\ldots,\omega_n$.
    Suppose that the linear span of $\omega_1,\ldots,\omega_n$ intersects $\mathbb{Q}^{\widetilde{E}}_{>0}$.
    Then,
    given each $C_i$ is the tropical variety formed from the support of $\omega_i$,
    we have $C = \bigcup_{i=1}^n C_i$.
\end{lemma}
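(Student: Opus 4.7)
The plan is to prove this lemma via a straightforward two-step containment argument, where the forward containment is immediate from definitions and the reverse containment is where the hypothesis about intersecting the strictly positive orthant is used.

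First, I would observe that the containment $\bigcup_{i=1}^n C_i \subseteq C$ is automatic: each $C_i$ is by construction the subcomplex of $C$ consisting of those maximal faces $\sigma \in \widetilde{E}$ where $\omega_i(\sigma) > 0$, together with all their lower-dimensional faces. So every point of each $C_i$ lies in $C$.

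For the reverse containment $C \subseteq \bigcup_{i=1}^n C_i$, since $C$ is pure of dimension $k$ and every lower-dimensional face of $C$ is contained in some maximal face of $C$, it suffices to show that every $\sigma \in \widetilde{E}$ lies in at least one $C_i$. Equivalently, for every $\sigma \in \widetilde{E}$, there must exist some index $i$ with $\omega_i(\sigma) > 0$. This is the step where the hypothesis enters: by assumption, there exist rational scalars $a_1, \ldots, a_n$ such that
\begin{align*}
    \lambda := \sum_{i=1}^n a_i \omega_i \in \mathbb{Q}^{\widetilde{E}}_{>0}.
\end{align*}
In particular $\lambda(\sigma) > 0$ for every $\sigma \in \widetilde{E}$. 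If there were some $\sigma \in \widetilde{E}$ with $\omega_i(\sigma) = 0$ for all $i$, then we would have $\lambda(\sigma) = \sum_i a_i \omega_i(\sigma) = 0$, a contradiction. Hence for every $\sigma \in \widetilde{E}$, some $\omega_i(\sigma) > 0$, so $\sigma$ is a maximal face of $C_i$ and thus contained in $\bigcup_{i=1}^n C_i$.

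There is no substantive obstacle here: the argument is essentially a pigeonhole observation on the supports combined with the definitions of $C_i$ and partial weighting. The only mild subtlety is the reminder that purity of $C$ is what lets us reduce to checking maximal faces. Once extracted, covering the maximal faces immediately yields covering of the whole polyhedral complex.
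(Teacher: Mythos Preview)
Your proof is correct and matches the paper's treatment: the paper does not give a formal proof of this lemma, stating only that it is ``an immediate consequence of the relationship between strict partial weightings [and] proper extremal subvarieties.'' Your two-step containment argument, with the key observation that a strictly positive linear combination forces every maximal face to lie in some support, is exactly the unpacking the paper leaves implicit.
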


\Cref{lem:deconstruct} can immediately provide us with an upper bound on the size of a minimal decomposition of a tropical curve.

\begin{proposition}\label{p:decomp}
    Let $C$ be a tropical variety.
    If $m = |\widetilde{E}| - \rank R(C)$,
    then there exists a decomposition of $C$ into $m$ pairwise-distinct extremal tropical varieties.
\end{proposition}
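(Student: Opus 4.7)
The plan is to combine \Cref{t:detect}, \Cref{t:allirreducible}, and \Cref{lem:deconstruct} in a straightforward way. By \Cref{t:detect}, $m = |\widetilde{E}|- \rank R(C) = \dim \ker R(C)^\top$ equals the maximum number of linearly independent balanced weightings of $C$. If $m=1$, then $C$ is already extremal and the singleton decomposition $\{C\}$ does the job, so we may assume $m\geq 2$ and, in particular, that $C$ is not extremal.

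Next, I would apply \Cref{t:allirreducible} to obtain the minimal generating set $\Omega = \{\omega_1, \ldots, \omega_n\}$ of the strongly convex cone $W(C)$, together with the associated pairwise-distinct extremal subvarieties $C_1, \ldots, C_n$ (each being the support of the corresponding $\omega_i$). The key observation is that $\Omega$ spans (over $\mathbb{Q}$) the linear subspace $\ker R(C)^\top \cap \mathbb{Q}^{\widetilde{E}}$. This is because, given any rational balanced weighting $v \in \ker R(C)^\top$, we may write $v = (v + t\omega) - t\omega$ for some strictly positive balanced weighting $\omega$ of $C$ and sufficiently large rational $t>0$; both summands then lie in $W(C)$, and hence in the cone generated by $\Omega$. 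In particular, $|\Omega| \geq m$ and $\Omega$ contains a collection of $m$ linearly independent elements.

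With this in hand, choose $m$ linearly independent elements $\omega_{i_1}, \ldots, \omega_{i_m} \in \Omega$. Their linear span is exactly $\ker R(C)^\top$ (by a dimension count) and therefore contains the strictly positive balanced weighting of $C$, so it meets $\mathbb{Q}^{\widetilde{E}}_{>0}$. By \Cref{lem:deconstruct}, the supports satisfy $C = C_{i_1} \cup \cdots \cup C_{i_m}$, and by \Cref{t:allirreducible} these extremal subvarieties are pairwise distinct.

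The statement is essentially book-keeping once the previous results are available. The only place demanding a small argument is confirming that a minimal generating set of $W(C)$ spans the entire rational kernel $\ker R(C)^\top \cap \mathbb{Q}^{\widetilde{E}}$; this is the step I expect to require the most care, but it follows from the standard trick of shifting by a large multiple of a strictly positive balanced weighting to reduce the sign-indefinite case to two elements of the cone. Everything else is a direct appeal to previously established results.
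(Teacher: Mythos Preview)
Your proposal is correct and follows essentially the same approach as the paper's proof: obtain the minimal generating set $\Omega$ of $W(C)$ from \Cref{t:allirreducible}, extract $m$ linearly independent elements, verify their linear span meets $\mathbb{Q}^{\widetilde{E}}_{>0}$, and apply \Cref{lem:deconstruct}. Your treatment is in fact slightly cleaner in two places: you handle the $m=1$ case explicitly (which the paper glosses over, since \Cref{t:allirreducible} is stated only for non-extremal $C$), and your argument that the span meets the positive orthant---via the observation that the span equals all of $\ker R(C)^\top$ and hence contains the given balanced weighting---is more direct than the paper's contrapositive through common zero sets.
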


\begin{proof}
    Fix $\Omega=\{\omega_1,\ldots,\omega_n\}$ to be the minimal generating set of $W(C)$ given by \Cref{t:allirreducible}.
    The number of generators of a finitely generated convex cone is at least its dimension,
    hence $m \leq n$.
    Furthermore,
    the linear span of $\Omega$ must have the same dimension as $W(C)$,
    hence there exists a subset $\Omega' \subset \Omega$ where $|\Omega'|=m$ and the linear span of $\Omega'$ is equal to the linear span of $W(C)$.

    By relabelling elements of $\Omega$ we may suppose that $\Omega' = \{\omega_1,\ldots, \omega_m\}$.
    If $\bigcap_{i=1}^m \omega_i^{-1}(\{0\}) \neq \emptyset$ then the linear span of $W(C)$ is disjoint from $\mathbb{Q}^n_{>0}$,
    which contradicts that $C$ is a tropical variety (and so has a balanced weighting).
    Hence the linear span of $\Omega'$ intersects with $\mathbb{Q}^{\widetilde{E}}_{>0}$.
    The result now follows from \Cref{lem:deconstruct}.
\end{proof}

If $C$ is a tropical variety with $\rank R(C) = |\widetilde{E}| - 2$,
then it follows from \Cref{p:decomp} that there exists a decomposition of $C$ into 2 distinct extremal tropical varieties.
As extremal tropical varieties cannot contain other extremal tropical varieties as proper subsets,
it clear in this case that this is the least number of extremal tropical varieties that $C$ can be decomposed into.
If, however, $\rank R(C) < |\widetilde{E}| - 2$,
then it is possible for there to exist a decomposition of $C$ into 2 distinct extremal tropical varieties.
This is illustrated in the following example.

\begin{example}
    Let $C$ is the tropical curve described in \Cref{Fig:Reducible-Trop-Var}.
    Given that the matrix associated to $C$ is of the form
    \begin{align*}
        R(C) = 
        \begin{bmatrix}
            1 & 0 \\
            -1& 0 \\
            0 & 1 \\
            0 &-1 \\
            1 & 1 \\
            -1&-1
        \end{bmatrix},
    \end{align*}
    and hence $\rank R(C) = 2 = |\widetilde{E}| - 4$.
 It follows from \Cref{p:decomp} that there exists a decomposition of $C$ into four pair-wise distinct extremal tropical varieties. However, $C$ contains exactly five extremal tropical varieties, and it also has a decomposition into two extremal tropical varieties. 
\end{example}

\begin{example}\label{Ex:triple-union-trop-var}
    We now describe a tropical curve that only decomposes into three extremal tropical curves.
    Take the tropical polynomial
    \begin{align*}
        f(x,y) &= \Big(x \oplus \big( (-1) \otimes y \big) \oplus 0 \Big) \otimes \Big( \big( (-1) \otimes x \big) \oplus y \oplus 0 \Big) \otimes \Big( \big( (-2) \otimes x \big) \oplus \big( (-2) \otimes y \big) \oplus 0 \Big) \\
        &= \big( (-3) \otimes x^{\otimes 3} \big) \oplus \big( (-2) \otimes x^{\otimes 2} \otimes y \big) \oplus \big( (-2) \otimes x \otimes y^{\otimes 2} \big) \oplus \big( (-3) \otimes y^{\otimes 3} \big)\\
        &\quad  \oplus \big( (-1) \otimes x^{\otimes 2} \big)\oplus \big( x \otimes y \big) \oplus x \oplus \big( (-1) \otimes y^{\otimes 2} \big) \oplus y \oplus 0.
    \end{align*}
    The tropical hypersurface $C$ that comes from $f$ can be seen in \Cref{fig:Triple-Union-trop-var}.
    The matrix associated to $C$ is of the form
    \begin{align*}
        R(C) = 
        \begin{bmatrix}
            -1& 0 & 0 & 0 & 0 & 0 & 0 & 0 & 1 & 0 & 0 & 0 \\
            0 & -1& 0 & 0 & 0 & 0 & 0 & 1 & 0 & 0 & 0 & 0 \\
            1 & 1 & 0 & 0 & 0 & 0 & 0 & 0 & 0 & 0 & 0 & 0 \\
            0 & 0 &-1 & 0 & 0 & 0 & 0 & 0 & 0 & 0 & 1 & 0 \\
            0 & 0 & 0 &-1 & 0 & 0 & 0 & 0 & 0 & 0 & 0 & 0 \\
            0 & 0 & 1 & 1 & 0 & 0 &-1 &-1 & 0 & 0 & 0 & 0 \\
            0 & 0 & 0 & 0 &-1 & 0 & 0 & 0 & 0 & 0 & 0 & 0 \\
            0 & 0 & 0 & 0 & 0 &-1 & 0 & 0 & 0 & 0 & 0 & 1 \\
            0 & 0 & 0 & 0 & 1 & 1 & 0 & 0 &-1 &-1 & 0 & 0 \\
            0 & 0 & 0 & 0 & 0 & 0 & 1 & 1 & 0 & 0 & 0 & 0 \\
            0 & 0 & 0 & 0 & 0 & 0 & 0 &-1 & 0 & 0 & 0 & 0 \\
            0 & 0 & 0 & 0 & 0 & 0 & 0 & 0 & 1 & 1 & 0 & 0 \\
            0 & 0 & 0 & 0 & 0 & 0 & 0 & 0 &-1 & 0 & 0 & 0 \\
            0 & 0 & 0 & 0 & 0 & 0 & 0 & 0 & 0 & 0 &-1 & 0 \\
            0 & 0 & 0 & 0 & 0 & 0 & 0 & 0 & 0 & 0 & 0 &-1
        \end{bmatrix},
    \end{align*}
    and hence $\rank R(C) = 12 = |\widetilde{E}| - 3$.
    A basis for the left kernel of $R(C)$ is given by the three balanced partial weightings (here represented as left kernel row vectors)
    \begin{align*}
        [ ~ 1 ~ 1 ~ 1 ~ 0 ~ 0 ~ 0 ~ 0 ~ 0 ~ 0 ~ 0 ~ 1 ~ 0 ~ 1 ~ 0 ~ 0 ~ ], \\
        [ ~ 0 ~ 0 ~ 0 ~ 1 ~ 1 ~ 1 ~ 0 ~ 0 ~ 0 ~ 1 ~ 0 ~ 0 ~ 0 ~ 1 ~ 0 ~ ], \\
        [ ~ 0 ~ 0 ~ 0 ~ 0 ~ 0 ~ 0 ~ 1 ~ 1 ~ 1 ~ 0 ~ 0 ~ 1 ~ 0 ~ 0 ~ 1 ~ ].
    \end{align*}
    From this, we observe that these vectors form a minimal generator set of $W(C)$,
    and thus the supports of these balanced partial weightings are the only extremal tropical varieties contained in $C$.
    Hence $C$ has a unique decomposition into 3 extremal tropical curves, as shown in \Cref{SubFig:triple-union-extr-decomp}.
\end{example}

\begin{figure}
    \centering
    \begin{subfigure}[b]{0.45\textwidth}
            \centering
            \includegraphics{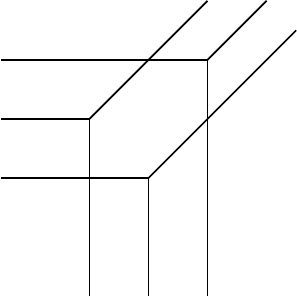}
        \caption{}
        \label{SubFig:triple-union-full-var}
        \end{subfigure}
        \begin{subfigure}[b]{0.45\textwidth}
            \centering
            \includegraphics{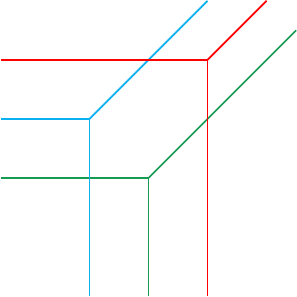}
        \caption{}
        \label{SubFig:triple-union-extr-decomp}
        \end{subfigure}
    \caption{The tropical variety of the tropical polynomial from \Cref{Ex:triple-union-trop-var} in \ref{SubFig:triple-union-full-var}, with colours indicating the extremal decomposition in \ref{SubFig:triple-union-extr-decomp}. }
    \label{fig:Triple-Union-trop-var}
\end{figure}

For any tropical variety $C$,
we define the convex polytope
\begin{align*}
    P(C) := W(C) \cap \left\{ x \in \mathbb{R}^{\widetilde{E}} : \sum_{\sigma \in \widetilde{E}} x_\sigma = 1 \right\}.
\end{align*}
The vertices of the polytope $P(C)$ form a minimal generating set of $W(C)$.
In fact, the vertices of $P(C)$ gift us a concrete method for constructing extremal decompositions of $C$.

\begin{theorem}\label{t:decompperfect}
    Let $C$ be a $k$-dimensional tropical variety in $\mathbb{R}^d$,
    let $\omega_1,\ldots,\omega_n$ be the vertices of the polytope $P(C)$,
    and let $C_i$ be the the tropical variety of $C$ formed from the support of $\omega_i$ for each $i \in \{1,\ldots,n\}$.
    Then $C_1, \ldots, C_n$ are the distinct extremal tropical varieties contained in $C$.
    Furthermore,
    for each subset $S \subset \{1,\ldots,n\}$,
    the following two statements are equivalent.
    \begin{enumerate}
        \item \label{t:decompperfect1} $\{C_i : i \in S\}$ is an extremal decomposition of $S$.
        \item \label{t:decompperfect2} The vertices $\omega_1,\ldots,\omega_n$ span $P(C)$;
        i.e., there exists scalars $\{t_i \in (0,1] : i \in S\}$ that satisfy $\sum_{i \in S} t_i = 1$ such that $\sum_{i \in S} t_i \omega_i$ is contained in the relative interior of $P(C)$.
    \end{enumerate}
\end{theorem}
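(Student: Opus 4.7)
The plan is to recognise $P(C)$ as the cross-section of the cone $W(C)$ by the normalising hyperplane $\{\sum x_\sigma = 1\}$, so that its vertices biject (up to positive scaling) with the extreme rays of $W(C)$. By \Cref{t:allirreducible}, when $C$ is not extremal the minimal generators of $W(C)$ are strictly partial balanced weightings whose supports are precisely the extremal subvarieties of $C$; rescaling each generator to have coordinate sum $1$ produces the vertices $\omega_1,\ldots,\omega_n$, whose supports are the claimed list $C_1,\ldots,C_n$. The extremal case is trivial: $W(C)$ is a single ray, $P(C)$ degenerates to a single point, and $C_1 = C$. Distinctness of the $C_i$ uses \Cref{lem:extre-decomp}\ref{lem:extre-decomp3}: an extremal tropical variety has a unique balanced weighting up to positive scalar, so two distinct vertices of $P(C)$ (which are not positive scalings of one another) cannot have the same support.

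Next I would characterise $\mathrm{relint}(P(C))$. The cone $W(C)$ is the intersection of the rational linear subspace $V(C) \subset \mathbb{R}^{\widetilde{E}}$ of balanced weightings with the non-negative orthant, and since $C$ is a tropical variety, $V(C)$ meets the open positive orthant. It follows by a standard polyhedral argument that
\begin{align*}
    \mathrm{relint}(P(C)) = \Big\{ x \in V(C) : x_\sigma > 0 \text{ for all } \sigma \in \widetilde{E}, \ \textstyle\sum_{\sigma} x_\sigma = 1 \Big\};
\end{align*}
in other words, a point of $P(C)$ lies in its relative interior precisely when it has full support on $\widetilde{E}$.

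With this description the equivalence becomes a bookkeeping exercise on supports. For \ref{t:decompperfect1}$\Rightarrow$\ref{t:decompperfect2}, if $\bigcup_{i \in S} C_i = C$ then every maximal face $\sigma \in \widetilde{E}$ is a maximal face of some $C_i$ with $i \in S$, so $\omega_i(\sigma) > 0$ for at least one such $i$; the uniform convex combination $t_i = 1/|S|$ then gives $\sum_{i \in S} t_i \omega_i$ all coordinates positive, hence in $\mathrm{relint}(P(C))$. Conversely, for \ref{t:decompperfect2}$\Rightarrow$\ref{t:decompperfect1}, if $\sum_{i \in S} t_i \omega_i$ lies in $\mathrm{relint}(P(C))$ with $t_i > 0$, then for every $\sigma \in \widetilde{E}$ the positivity of $\sum_{i \in S} t_i \omega_i(\sigma)$ forces some $\omega_i(\sigma) > 0$, so $\sigma$ is a maximal face of $C_i$; thus every maximal face of $C$ is covered by $\bigcup_{i \in S} C_i$, giving equality.

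The only non-routine step is the identification of $\mathrm{relint}(P(C))$ with the full-support locus, which hinges on observing that the only facets of $W(C)$ are cut out by coordinate hyperplanes $x_\sigma = 0$ intersected with $V(C)$. Once this is in place, the rest of the argument is a direct combinatorial verification using \Cref{t:allirreducible} and the bijection between extreme rays of $W(C)$ and vertices of $P(C)$.
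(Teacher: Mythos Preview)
Your proposal is correct and follows essentially the same approach as the paper's proof: both identify the vertices of $P(C)$ with the extremal subvarieties via \Cref{t:allirreducible}, use the uniform convex combination $t_i = 1/|S|$ for \ref{t:decompperfect1}$\Rightarrow$\ref{t:decompperfect2}, and argue the reverse direction via the support-covering observation. The only differences are cosmetic: the paper cites \Cref{lem:deconstruct} for \ref{t:decompperfect2}$\Rightarrow$\ref{t:decompperfect1} rather than arguing it directly, and it silently assumes the identification $\mathrm{relint}(P(C)) = \{\text{full-support points}\}$ that you take the trouble to justify.
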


\begin{proof}
    By \Cref{t:allirreducible},
    every extremal tropical variety contained in $C$ corresponds to a unique vertex of $P(C)$.
    Hence $C_1, \ldots, C_n$ are the distinct extremal tropical varieties contained in $C$.
    If \ref{t:decompperfect1} holds then $\omega = \frac{1}{|S|}\sum_{i\in S} \omega_i$ has non-zero coordinates.
    Thus $\omega$ lies in the relative interior of $P(C)$ and \ref{t:decompperfect2} holds.
    \ref{t:decompperfect2} implies \ref{t:decompperfect1} now follows from \Cref{lem:deconstruct}.
\end{proof}

\Cref{t:decompperfect} allows us to determine whether or not a tropical variety has a unique extremal decomposition.

\begin{corollary}\label{c:decompperfect}
    Let $C$ be a $k$-dimensional tropical variety in $\mathbb{R}^d$.
    Then the following three statements are equivalent.
    \begin{enumerate}
        \item \label{c:decompperfect1} $W(C)$ has a minimal generating set of size $n$ and $\rank R(C) = |\widetilde{E}| - n$.
        \item \label{c:decompperfect2} The convex polytope $P(C)$ is a $(n-1)$-dimensional simplex.
        \item \label{c:decompperfect3} $C$ has a unique extremal decomposition, and the decomposition contains $n$ extremal tropical varieties.
    \end{enumerate}
\end{corollary}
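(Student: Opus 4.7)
The plan is to deduce the three equivalences by combining the bijection between vertices of $P(C)$ and minimal generators of $W(C)$ established in \Cref{t:allirreducible} with the characterisation of extremal decompositions from \Cref{t:decompperfect}. The equivalence (i) $\Leftrightarrow$ (ii) is a direct dimension count; the equivalence (ii) $\Leftrightarrow$ (iii) requires a little more convex-geometric care, and (iii) $\Rightarrow$ (ii) is the main obstacle.

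For (i) $\Leftrightarrow$ (ii), I would note that \Cref{t:detect} gives $\dim \ker R(C)^\top = |\widetilde{E}| - \rank R(C)$, and since $P(C)$ is the intersection of the cone $W(C) \subseteq \ker R(C)^\top$ with the affine hyperplane $\{x : \sum_\sigma x_\sigma = 1\}$, we have $\dim P(C) = |\widetilde{E}| - \rank R(C) - 1$. The minimal generators of $W(C)$ correspond (up to positive scaling) to the vertices of $P(C)$, so the size of a minimal generating set equals the number of vertices. A polytope with $n$ vertices is an $(n-1)$-simplex precisely when its dimension is $n-1$, which turns the two conditions in (i) into condition (ii).

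The direction (ii) $\Rightarrow$ (iii) is immediate from the standard fact that in an $(n-1)$-simplex every point of the relative interior is a unique strictly positive convex combination of all $n$ vertices, while no proper subset of vertices contains an interior point in its convex hull. By \Cref{t:decompperfect} this forces the unique extremal decomposition to be $\{C_1, \ldots, C_n\}$.

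The main obstacle is (iii) $\Rightarrow$ (ii). Let $\omega_1, \ldots, \omega_m$ be the vertices of $P(C)$. The barycenter $p = \tfrac{1}{m}\sum_i \omega_i$ lies in the relative interior, so by \Cref{t:decompperfect} the set $\{C_1, \ldots, C_m\}$ is an extremal decomposition, and uniqueness forces $m = n$. To rule out $\dim P(C) < n - 1$, I would exploit affine dependence: in that case there exist reals $b_1, \ldots, b_n$, not all zero, with $\sum_i b_i = 0$ and $\sum_i b_i \omega_i = 0$; after possibly negating, set $b_{\max} := \max_i b_i > 0$. Taking $t := 1/(n b_{\max})$ and $t_i := \tfrac{1}{n} - t b_i$ yields nonnegative coefficients with $\sum_i t_i = 1$ and $\sum_i t_i \omega_i = p$, but with at least one vanishing (namely those with $b_i = b_{\max}$). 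Setting $S := \{i : t_i > 0\} \subsetneq \{1, \ldots, n\}$ and applying \Cref{t:decompperfect} produces a second extremal decomposition, contradicting uniqueness. The only subtle point is verifying nonnegativity of all $t_i$ and strict properness of $S$, both of which are immediate from the definition of $t$.
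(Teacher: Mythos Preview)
Your proof is correct and follows essentially the same route as the paper: both arguments reduce (i)$\Leftrightarrow$(ii) to a dimension count, and both derive (ii)$\Leftrightarrow$(iii) from \Cref{t:decompperfect} via the convex-geometric fact that a polytope is a simplex precisely when no relative-interior point admits a convex representation omitting a vertex. The only difference is that the paper simply asserts this characterisation of simplices as an ``observation'', whereas you explicitly verify the non-trivial direction (non-simplex $\Rightarrow$ a proper subset of vertices spans an interior point) via the affine-dependence perturbation of the barycenter.
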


\begin{proof}
    Since the cone $W(C)$ has dimension $|\widetilde{E}| - \rank R(C)$,
    it follows that \ref{c:decompperfect1} and \ref{c:decompperfect2} are equivalent.
    Fix $n = \dim P(C) + 1$,
    and let $\omega_1,\ldots, \omega_m$ be the vertices of $P(C)$.
    Observe that the following three statements are equivalent: 
    (i) $P(C)$ is a simplex, (ii) $P(C)$ has $n$ vertices (i.e., $m=n$), and
    (iii) if a convex combination $\sum_{i=1}^m t_i \omega_i$ is contained in the relative interior of $P(C)$, then $t_i >0$ for each $i \in \{1,\ldots,m\}$.
    With this observation,
    the equivalence between \ref{c:decompperfect2} and \ref{c:decompperfect3} follows from \Cref{t:decompperfect}.
\end{proof}

Interestingly,
if a tropical variety does not have a unique extremal decomposition,
then an improved upper bound is possible for the number extremal tropical varieties needed to cover it.

\begin{corollary}\label{c:decompimperfect}
    Let $C$ be a $k$-dimensional tropical variety in $\mathbb{R}^d$ and fix $n = |\widetilde{E}| - \rank R(C)$.
    If $C$ has at least two distinct extremal decompositions,
    then there exists a decomposition of $C$ into at most $\lfloor \frac{n-1}{2} \rfloor + 1$ extremal tropical varieties.
\end{corollary}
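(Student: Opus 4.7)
By \Cref{c:decompperfect}, the assumption that $C$ admits at least two distinct extremal decompositions is equivalent to $P(C)$ not being an $(n-1)$-simplex, hence having at least $n+1$ vertices. The plan is to extract a small decomposition from an affine dependency among $n+1$ well-chosen vertices via Radon's theorem.

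First, I would invoke \Cref{p:decomp} to obtain an extremal decomposition $\{\omega_1,\ldots,\omega_n\}$ of $C$; the corresponding vertices of $P(C)$ must be affinely independent, as they span the $(n-1)$-dimensional affine hull of $P(C)$. Pick any additional vertex $\omega_{n+1}$ of $P(C)$, which exists by hypothesis. The unique affine relation $\omega_{n+1} = \sum_{i=1}^n c_i \omega_i$ with $\sum_{i=1}^n c_i = 1$ yields the Radon partition $A = \{i \leq n : c_i > 0\}$ and $B = \{n+1\} \cup \{i \leq n : c_i < 0\}$, together with a common Radon point $p$ that is a strictly positive convex combination of the vertices in each of $A$ and $B$. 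Since $|A| + |B| \leq n+1$, at least one of $|A|, |B|$ is at most $\lfloor (n+1)/2 \rfloor = \lfloor (n-1)/2 \rfloor + 1$. To conclude via \Cref{t:decompperfect}, it suffices that $p$ lie in the relative interior of $P(C)$, i.e.\ that $p \in \mathbb{R}^{\widetilde{E}}$ have all positive coordinates; this would make both $\{\omega_i\}_{i \in A}$ and $\{\omega_i\}_{i \in B}$ extremal decompositions of $C$, and taking the smaller of the two yields the required bound.

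\textbf{The main obstacle} is precisely this interiority condition. If $p(\sigma) = 0$ for some $\sigma$, then $\omega_i(\sigma) = 0$ for every $i \in A \cup B$, since $p$ is a strict positive convex combination on either side; combined with the fact that $\{\omega_1, \ldots, \omega_n\}$ covers $\widetilde{E}$, this forces the existence of some $j \leq n$ with $\omega_j(\sigma) > 0$ and $c_j = 0$. It therefore suffices to select $\omega_{n+1}$ so that every $c_i$ is nonzero, or equivalently so that $\omega_{n+1}$ avoids the $n$ affine hyperplanes $\{c_i = 0\}$ spanned by the facets of the simplex $\mathrm{conv}(\omega_1, \ldots, \omega_n)$. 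The technical crux is to verify that such a choice of $\omega_{n+1}$ can always be realised among the vertices of $P(C)$; if no vertex outside the initial decomposition avoids every degeneracy hyperplane, one re-selects the initial decomposition $\{\omega_1,\ldots,\omega_n\}$ to shift the hyperplanes, or passes to the facet of $P(C)$ containing the current Radon point to iterate on a lower-dimensional problem (smaller $n$). The base case $n \leq 2$ is vacuous because $P(C)$ is then automatically a simplex and the hypothesis fails.
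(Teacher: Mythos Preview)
Your plan diverges from the paper's argument in a substantial way. The paper does not attempt a direct Radon-type construction at all: after observing (via \Cref{c:decompperfect}) that $P(C)$ is an $(n-1)$-polytope which is not a simplex, it simply invokes a result from Gr\"unbaum's book stating that any $D$-polytope which is not a simplex has a set of at most $\lfloor D/2\rfloor+1$ vertices not contained in a common proper face, and then feeds that set into \Cref{t:decompperfect}. So where you try to manufacture the small spanning set by hand from a Radon partition, the paper outsources the entire combinatorial step to a known polytope-theoretic fact.

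Your approach is natural---Radon partitions are exactly how one would begin to prove such a statement---and your reduction ``all $c_i\neq 0$ $\Rightarrow$ Radon point interior'' is correct. The gap is that you do not actually establish that an $(n+1)$-st vertex with all $c_i\neq 0$ exists, and your two suggested repairs do not close it. For a concrete obstruction at the polytope level, take $P$ to be a triangular prism ($D=3$, six vertices). Every pair of vertices lies in a common facet, so \emph{any} affine dependency among five vertices of $P$ has at least one zero coefficient; no re-selection of the initial four-vertex simplex helps. Your ``pass to a facet and iterate'' idea also stalls: if the Radon point lands on a facet $F$, then every vertex in both $A$ and $B$ lies in $F$, so you have only produced a small set spanning $F$, not $P(C)$, and there is no mechanism to climb back out to the interior. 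Completing your line of argument genuinely requires the extra combinatorial content that the paper imports from Gr\"unbaum (or an equivalent substitute); as written, the proposal is a correct outline up to the obstacle you flag, but the obstacle is the heart of the matter and is left open.
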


\begin{proof}
    As $C$ has at least two distinct extremal decompositions,
    it follows from \Cref{c:decompperfect} that the polytope $P(C)$ is not a $(n-1)$-dimensional simplex.
    By \cite[pg.~123]{grunbaum},
    any $D$-dimensional polytope that is not a simplex must contain a set of at most $\lfloor \frac{D}{2} \rfloor + 1$ vertices that are not contained in a face.
    Hence,
    $P(C)$ contains a set of at most vertices $m \leq  \lfloor \frac{n-1}{2} \rfloor + 1$ that span $P(C)$.
    The result now follows from \Cref{t:decompperfect}.
\end{proof}

\subsection{An algorithm for decomposing tropical varieties}\label{subsec:alg}

As mentioned in the previous subsection,
there is a one-to-one correspondence between extremal tropical varieties contained in a tropical variety $C$ and the vertices of the convex polytope $P(C)$.
\Cref{c:decompperfect} further informs us that the problem of finding an extremal decomposition is equivalent to finding a subset of vertices that span $P(C)$.
The question now is: How can we efficiently construct extremal decompositions of a tropical variety?
Before we even begin, there is the question of what we even mean by a tropical variety.
For this, we will settle on the more geometric interpretation.
To be specific: if we refer to the input of an algorithm being a tropical variety, we are referring to a labelled set of maximal faces and ridges (each described by finitely many linear equalities and inequalities) where it is known which maximal faces intersect at what ridges.

An efficient method for finding an extremal decomposition of our chosen $k$-dimensional tropical variety $C \subset \mathbb{R}^d$ goes as follows.
\begin{enumerate}
    \item Generate the matrix $R(C)$. Forming $R(C)$ solely consists of finding the various $x_i(\tau)$ and $z_\tau(\omega)$ vectors. 
    Each of these vectors can be found using a combination of algorithms that put integer-valued matrices into Hermite normal form and Gaussian elimination.
    Hence, for a given ridge $\tau$ contained in a maximal face $\sigma$,
    each of the vectors $x_i(\tau)$ and $z_\tau(\omega)$ can be constructed in polynomial time with respect to $d,k$.
    As we must use the above two computational algorithms for every maximal face/ridge, constructing $R(C)$ will be completed in a polynomial number of steps with respect to $d$, $k$, $|\widetilde{V}|$ and $|\widetilde{E}|$.
    
    \item Compute the rank of $R(C)$. If $\rank R(C) = |\widetilde{E}| - 1$ then we terminate the algorithm here as $C$ is extremal.
    
    \item Find a vertex $\omega$ of $P(C)$.
    This can be performed using a variety of fast deterministic algorithms, including the simplex method and the ellipsoid method.
    While the ellipsoid method is guaranteed to run in polynomial time with respect to the size of $R(C)$ and the bit-size of the entries in $R(C)$,
    the simplex method is often significantly faster in practice. See \cite{bt97} for further discussion about these two algorithms.

    \item Set $S = \{ \omega\}$, fix the subset $I \subsetneq \widetilde{E}$ of zero coordinates of $x$, and set $I' = I$. For each $i \in I$, fix $b_i$ to be vector in $\mathbb{R}^{\widetilde{E}}$ with $b_i(i) = 1$ and $b_i(j) = 0$ if $j \neq i$. As $\omega$ is a vertex of $P(C)$,
    it is the unique element of $P(C)$ that satisfies $\omega^\top b_i = 0$ for each $i \in I$.

    \item \label{repeat1} Choose a set $J \subset I$ of size $\dim \ker R(C)^\top - 2$ where $I' \setminus J \neq \emptyset$. From this, define the matrix
    \begin{equation*}
        M_J = \big[\, R(C) ~ \ldots ~ \overbrace{b_j}^{j \in J} ~ \ldots \, \big].
    \end{equation*}
    By construction, the transpose of $M_J$ has a nullity of 1.
    Furthermore, there exists a unique non-zero rational element $x_J$ contained in $\ker M_J^\top$ where $x_J(i) \geq 0$ for every $i \in I$ and $x_J(j) > 0$ for some $j \in I' \setminus J$.

    \item \label{repeat2} Choose $t>0$ to be the largest value so that $\omega + t x_J \in P(C)$.
    From this, append $\omega + t x_J$ to $S$ and remove any elements from $I'$ which correspond to a non-zero coordinate of $\omega + t x_J$.

    \item Repeat steps \ref{repeat1} and \ref{repeat2} until the set $I'$ is empty.
    The set $S$ is now a set of vertices that span $P(C)$.
    An extremal decomposition of $C$ can now be constructed from $S$ via \Cref{t:decompperfect}.
\end{enumerate}

\subsection*{Acknowledgement}
SD and JM were supported by the Heilbronn Institute for Mathematical Research.

\bibliographystyle{alpha} 
\bibliography{references}

\end{document}